\setlist{itemsep=0em} % Condense lists
\setlist[enumerate]{label=(\roman*)}
\newif\ifbiber
\DeclareCiteCommand{\cite}{%
	\ifbibmacroundef{cite:init}{}{\usebibmacro{cite:init}}\usebibmacro{prenote}%
}{%
	\usebibmacro{citeindex}%
	\printtext[bibhyperref]{\usebibmacro{cite}}%
}{%
	\ifbibmacroundef{cite:init}{\multicitedelim}{}%
}{%
	\usebibmacro{postnote}%
}%
\DeclareCiteCommand{\parencite}[\mkbibbrackets]{%
	\ifbibmacroundef{cite:init}{}{\usebibmacro{cite:init}}\usebibmacro{prenote}%
}{%
	\usebibmacro{citeindex}%
	\printtext[bibhyperref]{\usebibmacro{cite}}%
}{%
	\ifbibmacroundef{cite:init}{\multicitedelim}{}%
}{%
	\usebibmacro{postnote}%
}%
\let\cite\parencite
\newcommand\R{\mathbb{R}}
\renewcommand\d{\mathrm{d}}
\newcommand\dx{\d x}
\newcommand{\weakly}{\rightharpoonup}
\newcommand{\anni}{^\perp}
\newcommand\TT{\mathcal{T}}
\newcommand\UU{\mathcal{U}}
\DeclareMathAlphabet{\mathpzc}{OT1}{pzc}{m}{it}
\newcommand\oo{\mathpzc{o}}
\newcommand\closure{\operatorname{cl}}
\newcommand\sgn{\operatorname{sgn}}
\DeclareMathOperator*{\argmin}{arg\,min}
\newtheorem{theorem}{Theorem}[section]
\newtheorem{lemma}[theorem]{Lemma}
\newtheorem{proposition}[theorem]{Proposition}
\newtheorem{assumption}[theorem]{Assumption}
\newtheorem{corollary}[theorem]{Corollary}
\newtheorem{remark}[theorem]{Remark}
\newtheorem{definition}[theorem]{Definition}
\crefname{assumption}{Assumption}{Assumptions}
\definecolor{darkgreen}{rgb}{0,0.5,0}
\definecolor{darkred}{rgb}{0.8,0,0}
\begin{document}
%fakesection: Title and co
\title{On Second-Order Optimality Conditions for Optimal Control Problems Governed by the Obstacle Problem\footnote{
This research was supported by the German Research Foundation (DFG) under grant number WA 3636/4-1
 within the priority program ``Non-smooth and Complementarity-based Distributed Parameter
Systems: Simulation and Hierarchical Optimization'' (SPP 1962). The first author gratefully acknowledges 
the support by the International Research Training Group Munich-Graz, IGDK 1754,
funded by the German Research Foundation (DFG) and the Austrian Science Fund (FWF).}
}

\author{%
 	Constantin Christof%
 	\footnote{%
		Technische Universit\"at M\"unchen,
		Faculty of Mathematics,
		85748 Garching bei M\"unchen, Germany,
		\url{https://www-m17.ma.tum.de/Lehrstuhl/ConstantinChristof},
  		\email{christof@ma.tum.de}%
 		}%
 	\and
 	Gerd Wachsmuth%
 	\footnote{%
		Brandenburgische Technische Universit\"at Cottbus-Senftenberg, 
		Institute of Mathematics, 
		03046 Cott\-bus, Germany, 
 		\url{https://www.b-tu.de/fg-optimale-steuerung},
 		\email{gerd.wachsmuth@b-tu.de}%
 	}%
 }
 % \date{\today}
 % \publishers{}
 %\dedication{}
\maketitle

\begin{abstract}
This paper is concerned with second-order optimality conditions 
for Ti\-kh\-on\-ov regularized optimal control problems governed by the obstacle  problem.
Using a simple observation that allows to characterize
the structure of optimal controls on the active set, 
we derive various conditions that guarantee the
local/global optimality of first-order stationary points and/or the local/global quadratic growth of the reduced objective function.
Our analysis extends and refines existing results from the literature, and 
also covers those situations where the problem at hand
involves additional box-constraints on the control. 
As a byproduct, our approach shows in particular that Tikh\-onov regularized
optimal control problems for the obstacle problem 
can be reformulated as state-constrained optimal control problems for the Poisson equation,
and that problems involving a subharmonic obstacle and a convex objective function are 
uniquely solvable. 
The paper concludes with three counter\-examples which illustrate
that rather peculiar effects can occur in the analysis 
of second-order optimality conditions for optimal control problems governed by the obstacle problem, 
and that necessary second-order conditions
for such problems may be hard to derive.
\end{abstract}

\begin{keywords}
obstacle problem, second-order condition, non-smooth optimization, optimal control, strong stationarity, global optimality, quadratic growth,
control constraints
\end{keywords}

\begin{msc}
\mscLink{35J86},
\mscLink{49J40},
\mscLink{49K21}
 \end{msc}

\section{Introduction}
\label{sec:intro}

The aim of this paper is to study second-order optimality conditions for Ti\-kh\-on\-ov regularized optimal
control problems governed by the classical obstacle problem, i.e., for minimization problems of the form
\begin{equation}
	\label{eq:prob}
	\tag{\textup{P}}
	\begin{aligned}
		\text{Minimize} \quad & J(y, u) := j(y) + \frac{\alpha}{2}\|u\|_{L^2}^2 \\
		\text{w.r.t.}\quad &(y,u) \in H_0^1(\Omega) \times L^2(\Omega) \\
		\text{s.t.}\quad & y \in K, \ \ \left \langle -\Delta y, v - y \right \rangle \geq \left \langle u, v - y \right \rangle \ \ \forall v \in K \\
		\text{and}\quad& u_a \leq u \leq u_b,
	\end{aligned}
\end{equation}
where $K := \{ v \in H_0^1(\Omega) \mid v \geq \psi \text{ a.e.\ in }\Omega \}$.
For the precise assumptions on the quantities $j$, $\alpha$, $\Omega$, $\psi$, etc.\ in \eqref{eq:prob}, we refer to \cref{sec:2}.
The main difficulty in deriving optimality conditions for
problems of the type \eqref{eq:prob} is the non-differentiability of the solution operator $S : u \mapsto y$
associated with the obstacle problem 
\begin{equation*}
y \in K, \ \ \left \langle -\Delta y, v - y \right \rangle \geq \left \langle u, v - y \right \rangle \ \ \forall v \in K
\end{equation*}
which appears as a constraint.
Because of this non-smoothness, standard results and analytical tools are typically inapplicable, and 
one has to work with rather involved stationarity concepts to construct, e.g., conditions which
are sufficient for the local optimality of a given control $\bar u$. 
In the literature, the approach that is most commonly used  in the context of second-order optimality conditions
to overcome the lack of regularity of the 
control-to-state mapping  $S : u \mapsto y$ in \eqref{eq:P} 
is to employ a strong stationarity system in the sense of \cite{Mignot1976} to derive a Taylor-like expansion for the 
map $u \mapsto J(S(u), u)$ and to subsequently analyze the growth behavior of the reduced objective function of \eqref{eq:P}
in the neighborhood of stationary points directly (cf.\ the results in \cref{sec:3}). 
This strategy has been pursued, e.g., in \cite{KunischWachsmuth2012} and 
\cite{AliDeckelnickHinze2018}
and typically gives rise to second-order sufficient optimality conditions which, along with
inequalities involving the second derivative $j''$, also contain assumptions 
on the sign or the size of dual quantities in the vicinity of the contact set. 
For related work on the optimal control of elliptic variational inequalities,
see also \cite{Harder2018,StrongStationarityConstraints2014,BergouniouxMignot2000,
Bergounioux1997,BergouniouxTiba1998,MeyerThoma2013,ItoKunisch2000,HintermullerKopacka2009,Outrata2011}.

In the present paper, we demonstrate that it is possible to improve
the known second-order conditions for problems of the type \eqref{eq:P} 
by exploiting the composite structure of the objective function $J$. To be more precise, 
in what follows, we show that the Ti\-kh\-on\-ov regularization term in $J$
allows to calculate precisely the values of first-order stationary controls of \eqref{eq:P}
on the contact set, and that the resulting formulas can be used 
to weaken the inequality conditions on the adjoint state/the control employed in 
\cite{KunischWachsmuth2012} and \cite{AliDeckelnickHinze2018} considerably.
As a byproduct, our approach shows in particular that problems of the form \eqref{eq:P}
can be reformulated as state-constrained optimal control problems 
for the Poisson equation, and that \eqref{eq:P} admits 
a unique local/global solution $\bar u$ which satisfies a global quadratic growth condition
 when the function $j$ is convex and the
obstacle $\psi$ is subharmonic. 
For the main results of our analysis, see 
\cref{the:Qstructure,th:reduxsubharmonic,th:enhancedSSC,th:SSCmajor} and
\cref{cor:redux2,cor:subharmonicSSC2}.

We conclude this introduction with a brief overview of the structure and the content of the paper:

\cref{sec:2} is concerned with preliminaries. Here, we clarify the notation, discuss 
the existence and properties of solutions of \eqref{eq:P}, and recall some classical results 
on strong and Bouligand stationarity conditions. 

\cref{sec:3} contains a theorem that essentially summarizes and combines the results of 
 \cite{KunischWachsmuth2012} and \cite{AliDeckelnickHinze2018}. 
In contrast to the second-order conditions found in the literature, the analysis 
of this section also covers those cases where \eqref{eq:P} involves 
box-constraints on the control. 

In \cref{sec:4}, we illustrate that the Ti\-kh\-on\-ov regularization term in $J$
indeed imposes a special structure on the first-order stationary points of \eqref{eq:P},
and that \eqref{eq:P} can indeed be reformulated as a state-constrained optimal control problem 
for the Poisson equation. The results of this section are also applicable when \eqref{eq:P} contains an
additional constraint of the form $y \in Y_{ad}$.

\cref{sec:5} addresses the consequences that the findings of  \cref{sec:4} have for the analysis 
of problems \eqref{eq:P} with subharmonic obstacles. Here, we prove in particular the already mentioned 
unique solvability in the case of a subharmonic obstacle and a convex $j$, and also discuss some implications 
for the analysis of state-constrained optimal control problems, cf.\ \cref{cor:subharmonicSSC3}. 

In \cref{sec:6}, we apply the results of \cref{sec:4} to problems \eqref{eq:P} with general obstacles. 
The main result of this section, \cref{th:enhancedSSC}, shows that the assumptions on the adjoint state in the second-order conditions 
of \cite{KunischWachsmuth2012} and \cite{AliDeckelnickHinze2018}, which essentially express that 
the adjoint state should not be ``too'' negative in the vicinity of the contact set, are too pessimistic, 
and that it is indeed sufficient when the adjoint state takes values outside of a bounded interval
whose length depends on the curvature of the obstacle under consideration. 

Lastly, \cref{sec:7} contains three counterexamples which illustrate which effects can prevent a 
strongly stationary point of \eqref{eq:P} from being a local optimum. Here, we will see 
in particular that the conditions on the dual quantities in our second-order conditions 
cannot be dropped without major problems. 

\section{Notation, Problem Setting and Preliminaries}
\label{sec:2}

Before we begin with our analysis, 
let us briefly comment on the notation that we employ in this paper: 
In what follows, 
we use the standard symbols $L^p(\Omega)$, $H_0^k(\Omega)$, $H^k(\Omega)$, $W^{k,p}(\Omega)$ and $C^{k, \alpha}(\Omega)$,
$k \in \mathbb{N}$, $1 \leq p \leq \infty$, $0 < \alpha \leq 1$, for the Lebesgue-, Sobolev- and H\"older spaces 
on a bounded domain $\Omega \subset \R^d$. For the precise definitions 
of these spaces and the associated norms and scalar products 
$\|\cdot\|_{L^p}$, $\|\cdot\|_{H^k}$, $\|\cdot\|_{W^{k,p}}$, $\|\cdot\|_{C^{k, \alpha}}$,
$(\cdot, \cdot)_{L^2}$, and $(\cdot, \cdot)_{H^k}$, we refer to \cite{Adams1975,Attouch2006,Evans2010}.
As usual, we denote the dual of $H_0^1(\Omega)$ by $H^{-1}(\Omega)$
and the dual pairing between elements of $H_0^1(\Omega)$ and $H^{-1}(\Omega)$ by $\left \langle \cdot, \cdot \right \rangle$.
With $\Delta$ and $\closure(\cdot)$, we denote the (distributional) Laplacian
and the topological closure of a set, respectively. 
If we want to emphasize that the closure is taken with respect to a particular norm, 
then we add a suitable subscript and write, e.g., $\closure_{H^1}(\cdot)$. With  $\mathds{1}_A : \Omega \to \{0,1\}$ 
we denote the indicator function of a measurable set $A \subset \Omega$, and with  $\{v * 0\}$, 
$* \in \{=,\neq,<,>,\leq, \geq\}$, $v : \Omega \to \R$, the set $\{ x \in \Omega \mid v(x) * 0\}$. Where appropriate, we 
consider $\{v * 0\}$ to be defined up to sets of measure zero and identify $\mathds{1}_A$ with an element of $L^\infty(\Omega)$.
Given a normed space $(V, \|\cdot\|_V)$, an element $\zeta$ of the topological dual $V^*$, $r>0$,
and a convex, non-empty set $L \subset V$,
we further denote with $B_r^V(v) := \{w \in V \mid \|v - w\|_V\leq r\}$ 
the closed ball of radius $r$ in $V$ centered at $v$, with $\zeta^\perp$ the kernel of $\zeta$, 
and with $\TT_L(v) := \closure_V \left ( \R^+(L - v)\right)$
the tangent cone to $L$ in $V$ at $v$, cf.\ \cite[Section 2.2.4]{BonnansShapiro2000}.
Note that additional symbols etc.\ are introduced in this paper wherever necessary. 
For the sake of readability, this supplementary notation is defined where it first appears in the text.

As already mentioned in the introduction, 
the main goal of this paper is to study second-order optimality conditions for optimal 
control problems of the type
\begin{equation}
	\label{eq:P}
	\tag{\textup{P}}
	\begin{aligned}
		\text{Minimize} \quad & J(y, u) := j(y) + \frac{\alpha}{2}\|u\|_{L^2}^2 \\
		\text{w.r.t.}\quad &(y,u) \in H_0^1(\Omega) \times L^2(\Omega) \\
		\text{s.t.}\quad & y \in K, \ \ \left \langle -\Delta y, v - y \right \rangle \geq \left \langle u, v - y \right \rangle \ \ \forall v \in K \\
		\text{and}\quad& u \in U_{ad} := \left \{ w \in L^2(\Omega) \mid u_a \leq w \leq u_b \text{ a.e.\ in } \Omega \right \}.
	\end{aligned}
\end{equation}
Our standing assumptions on the quantities in \eqref{eq:P} are as follows:

\begin{assumption}[Standing Assumptions for the Study of Problem {\eqref{eq:P}}]~
\label{assumption:standing}
\begin{itemize}
\item $d \in \{1,2,3\}$,
\item 
$\Omega \subset \mathbb{R}^d$ is a bounded domain 
that is convex or possesses a $C^{1,1}$-boundary, 
\item $j : H_0^1(\Omega) \to \R$ is twice continuously Fréchet differentiable and bounded from below,
\item $\alpha > 0$ is a given Tikh\-onov parameter,
\item $K := \{ v \in H_0^1(\Omega) \mid v \geq \psi \text{ a.e.\ in }\Omega \}$ with obstacle
 $\psi \in H^2(\Omega)$ such that $K \neq \emptyset$,
\item $u_a, u_b : \Omega \to [-\infty, \infty]$ are measurable functions with
$u_a \leq 0 \leq u_b$ a.e.\ in $\Omega$.
\end{itemize}
\end{assumption}

We remark that the subsequent analysis can be extended straightforwardly to those 
cases where the set $K$ in \eqref{eq:P} 
is of the form $ \{ v \in H_0^1(\Omega) \mid  {\psi_1\leq v\leq \psi_2} \text{ a.e.\ in }\Omega \}$
with functions $\psi_1, \psi_2 \in H^2(\Omega)$ satisfying $\psi_1 \leq \psi_2 - \varepsilon$ a.e.\ in $\Omega$ 
for some $\varepsilon > 0$. We restrict our attention
to the prototypical setting in \cref{assumption:standing} for the sake of simplicity
and to reduce the notational overhead. 

For the analysis of the optimal control problem \eqref{eq:P}, we 
need several known results on the properties of the solution map 
associated with the obstacle problem. We collect these in:

\begin{theorem}[Properties of the Control-to-State Map]
\label{th:solutionmapproperties}
For every $u \in L^2(\Omega)$, there exists one and only one solution $y \in H_0^1(\Omega)$ of the obstacle problem 
\begin{equation}
\label{eq:obstacleproblem}
 y \in K, \qquad \left \langle -\Delta y, v - y \right \rangle \geq \left \langle u, v - y \right \rangle \qquad \forall v \in K.
\end{equation}
This solution satisfies $y \in H_0^1(\Omega) \cap H^2(\Omega)$ and $-\Delta y = u + \lambda $ with 
a unique multiplier 
$\lambda \in L^2(\Omega)$ such that
\begin{equation}
\label{eq:multiplierformula}
\begin{aligned}
0 \leq \lambda
&=
\begin{cases}
-\Delta \psi - u  & \text{ a.e.\ in } \{y = \psi\},
\\
0 &\text{ a.e.\ in } \{ y > \psi \},
\end{cases}
\end{aligned}
\end{equation}
and there exists a constant $C>0$ independent of $u$ with 
\begin{equation}
\label{eq:H2reg}
\|y\|_{H^2} \leq C \left ( \|u\|_{L^2} + \|\Delta \psi\|_{L^2}\right ).
\end{equation}
Further, the solution map $S : u \mapsto y$ is
 globally Lipschitz continuous as a function from $L^2(\Omega)$ to $L^\infty(\Omega)$ and
globally Lipschitz continuous and directionally differentiable as a function from $H^{-1}(\Omega)$ to 
$H_0^1(\Omega)$, and the directional derivative 
$\delta_h := S'(u; h) \in H_0^1(\Omega)$ of $S$ in a point  $u \in L^2(\Omega)$ in a direction $h \in L^2(\Omega)$ is uniquely characterized by the variational inequality
\begin{equation}
\label{eq:VIdirdiff}
\delta_h \in \TT_K(y) \cap \lambda^\perp,\qquad 
  \left \langle  - \Delta \delta_h, z - \delta_h \right \rangle \geq \left \langle h, z - \delta_h \right \rangle
\qquad \forall z \in  \TT_K(y) \cap \lambda^\perp
\end{equation}
with $y := S(u)$, $\TT_K(y) := \closure_{H^1}(\R^+(K - y))$ and $\lambda := -\Delta y - u \in L^2(\Omega)$. 
Here, $\lambda^\perp$ denotes the kernel of $\lambda$ as an element of the dual space $H^{-1}(\Omega)$. 
\end{theorem}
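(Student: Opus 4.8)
The plan is to assemble the statement from the classical theory of elliptic variational inequalities; the proof naturally splits into three blocks.

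\textbf{Existence, uniqueness and the Lipschitz estimates.}
Since $K$ is non-empty, closed and convex and the bilinear form $(y,v)\mapsto\langle-\Delta y,v\rangle=\int_\Omega\nabla y\cdot\nabla v$ is bounded and, by Poincaré's inequality, coercive on $H_0^1(\Omega)$, the Lions--Stampacchia theorem yields a unique solution $y=S(u)$ of \eqref{eq:obstacleproblem} for every $u\in H^{-1}(\Omega)\supset L^2(\Omega)$. Subtracting the variational inequalities for $u_1,u_2$ and testing the one for $y_i:=S(u_i)$ with $v=y_{3-i}$ gives $\|\nabla(y_1-y_2)\|_{L^2}^2\le\langle u_1-u_2,y_1-y_2\rangle$, hence global Lipschitz continuity $H^{-1}(\Omega)\to H_0^1(\Omega)$. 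For the $L^\infty$-estimate I would use a comparison argument: with $z\in H_0^1(\Omega)$ the solution of $-\Delta z=|u_1-u_2|$ (so $z\ge0$), testing the variational inequality for $y_1$ with $\min(y_1,y_2+z)\in K$ and the one for $y_2$ with $\max(y_2,y_1-z)\in K$, subtracting, and using $-\Delta z\ge\pm(u_1-u_2)$ together with Poincaré's inequality yields $(y_1-y_2-z)^+=0$ and, symmetrically, $(y_2-y_1-z)^+=0$; thus $|S(u_1)-S(u_2)|\le z$ a.e., and since $d\le3$ one has $z\in H^2(\Omega)\hookrightarrow C(\overline{\Omega})$ with $\|z\|_{L^\infty}\le C\|u_1-u_2\|_{L^2}$, which is the claim (the inclusion $S(u)\in L^\infty(\Omega)$ follows by taking $u_2=0$ and using the $H^2$-regularity below).

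\textbf{$H^2$-regularity and the multiplier.}
Note $-\Delta\psi\in L^2(\Omega)$ because $\psi\in H^2(\Omega)$. I would approximate \eqref{eq:obstacleproblem} by the penalized equations $-\Delta y_\varepsilon=u+\lambda_\varepsilon$, $\lambda_\varepsilon:=\tfrac1\varepsilon\beta_\varepsilon(\psi-y_\varepsilon)$, with a non-decreasing $C^1$ penalty $\beta_\varepsilon\ge0$ vanishing on $(-\infty,0]$ and approximating $s\mapsto s^+$; these are uniquely solvable by monotone operator theory, and $y_\varepsilon-\psi\in H^2(\Omega)$, $\lambda_\varepsilon\in H_0^1(\Omega)$ by elliptic regularity. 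Testing with $y_\varepsilon$ gives a uniform $H_0^1$-bound, and the crucial point is a uniform bound on the multiplier: writing $\|\lambda_\varepsilon\|_{L^2}^2=\langle-\Delta(y_\varepsilon-\psi),\lambda_\varepsilon\rangle+\langle-\Delta\psi-u,\lambda_\varepsilon\rangle$ and observing that $\langle-\Delta(y_\varepsilon-\psi),\lambda_\varepsilon\rangle=\int_\Omega\nabla(y_\varepsilon-\psi)\cdot\nabla\lambda_\varepsilon=-\tfrac1\varepsilon\int_\Omega\beta_\varepsilon'(\psi-y_\varepsilon)|\nabla(y_\varepsilon-\psi)|^2\le0$ yields, via Cauchy--Schwarz, $\|\lambda_\varepsilon\|_{L^2}\le\|\Delta\psi\|_{L^2}+\|u\|_{L^2}$. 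By elliptic regularity on the convex or $C^{1,1}$ domain $\Omega$, $\|y_\varepsilon\|_{H^2}\le C\|\Delta y_\varepsilon\|_{L^2}$ is then bounded uniformly, and passing to the limit (weakly in $H^2(\Omega)$, strongly in $H_0^1(\Omega)$, with $\lambda_\varepsilon\weakly\lambda\ge0$ in $L^2(\Omega)$) one obtains $y\in H^2(\Omega)$ with $-\Delta y=u+\lambda$; a Minty-type argument identifies $y=S(u)$, and uniqueness of $\lambda$ is immediate from $\lambda=-\Delta y-u$. For \eqref{eq:multiplierformula}: the complementarity $\langle\lambda,y-\psi\rangle=0$ with $\lambda\ge0$, $y-\psi\ge0$ forces $\lambda=0$ a.e.\ on $\{y>\psi\}$, while Stampacchia's lemma ($\nabla w=0$ and $\Delta w=0$ a.e.\ on $\{w=0\}$ for $w\in H^2(\Omega)$) applied to $w=y-\psi$ gives $-\Delta y=-\Delta\psi$, hence $\lambda=-\Delta\psi-u$, a.e.\ on $\{y=\psi\}$. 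Finally, \eqref{eq:H2reg} follows from $\|y\|_{H^2}\le C\|\Delta y\|_{L^2}=C\|u+\lambda\|_{L^2}\le C(2\|u\|_{L^2}+\|\Delta\psi\|_{L^2})$, the bound on $\lambda$ being preserved in the limit by weak lower semicontinuity.

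\textbf{Directional differentiability.}
The remaining and most substantial ingredient is the conical differentiability of $S$. Here I would invoke the polyhedricity of the convex set $K=\{v\in H_0^1(\Omega):v\ge\psi\text{ a.e.}\}$ in $H_0^1(\Omega)$ — which holds because $H_0^1(\Omega)$ is a Dirichlet space and the constraint is pointwise, cf.\ \cite{Mignot1976} and the references in \cref{sec:intro} — so that $\TT_K(y)\cap\lambda^\perp=\closure_{H^1}\big(\R^+(K-y)\cap\lambda^\perp\big)$. The general theory of such variational inequalities then gives that for $u,h\in L^2(\Omega)$ the difference quotients $t^{-1}(S(u+th)-S(u))$, which are bounded in $H_0^1(\Omega)$ by the Lipschitz estimate above and lie in $\R^+(K-y)\subset\TT_K(y)$, converge strongly in $H_0^1(\Omega)$ as $t\downarrow0$ to the unique solution $\delta_h$ of \eqref{eq:VIdirdiff} over the critical cone $\TT_K(y)\cap\lambda^\perp$; membership of the limit in $\lambda^\perp$ follows since $\langle\lambda,S(u+th)-y\rangle\ge0$ and the analogous bound for the perturbed multiplier force $\langle\lambda,\delta_h\rangle=0$ in the limit, and unique solvability of \eqref{eq:VIdirdiff} is again Lions--Stampacchia on the closed convex cone $\TT_K(y)\cap\lambda^\perp$. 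I expect the polyhedricity/conical-differentiability step to be the genuinely non-routine part and the one most naturally delegated to the literature; the uniform multiplier bound in the regularity step — where $\psi\in H^2(\Omega)$ and the geometry of $\Omega$ enter — is the other point requiring real care, whereas existence, uniqueness and the $H^{-1}\to H_0^1$ estimate are standard.
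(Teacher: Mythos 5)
Your proposal is correct and follows essentially the same route as the paper: existence/uniqueness and the Lipschitz estimates via Lions--Stampacchia and comparison arguments, $H^2$-regularity via the penalization argument of Kinderlehrer--Stampacchia, conical differentiability delegated to polyhedricity and Mignot's theory, and the multiplier formula \eqref{eq:multiplierformula} obtained exactly as in the paper from complementarity plus Stampacchia's lemma on $\{y=\psi\}$. The only difference is one of presentation: the paper handles all blocks except the multiplier formula by citation, while you fill in the standard arguments (the comparison function $z$ with $-\Delta z=|u_1-u_2|$ for the $L^2\to L^\infty$ estimate and the uniform $L^2$-bound on $\lambda_\varepsilon$ in the penalization), which match the cited sources.
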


\begin{proof}
The existence of a unique solution $y \in H_0^1(\Omega)$ of \eqref{eq:obstacleproblem}
and the Lipschitz continuity properties of the solution map $S : u \mapsto y$ 
follow from standard results, see \cite[Theorem II-2.1]{KinderlehrerStampacchia1980}  and \cite[Lemma~2.2]{KunischWachsmuth2012},
and the directional differentiability of $S$ and the variational inequality \eqref{eq:VIdirdiff} 
are direct consequences of the polyhedricity of the admissible set $K$ and classical results of Mignot, see \cite{Wachsmuth2016:2,Mignot1976,Christof2018Phd}. 
To establish the $H^2$-regularity of the solution $y$ and the estimate \eqref{eq:H2reg},
one can use exactly the same arguments as in \cite[Chapter IV, Section 2]{KinderlehrerStampacchia1980},
cf.\ \cite[Theorem 9.15, Lemma 9.17]{GilbargTrudinger2001}, \cite[Theorem 3.2.1.2]{Grisvard1985}.

It remains to prove the formula \eqref{eq:multiplierformula} for $\lambda$. 
To this end, we first note that the $H^2$-regularity of the solution $y$, the variational inequality \eqref{eq:obstacleproblem}
and the structure of $K$ imply that $\lambda := -\Delta y - u$ is a non-negative element of $L^2(\Omega)$ which 
vanishes 
a.e.\ in $\{ y > \psi\}$.
From
the lemma of Stampacchia, see \cite[Proposition~5.8.2]{Attouch2006},
we may further deduce that 
$\nabla (y - \psi) = 0$ holds a.e.\ on $\{y = \psi\}$
and that
$\Delta (y - \psi) = 0$ holds a.e.\ on $\{\nabla y = \nabla \psi\}$.
As a consequence, 
$\Delta (y - \psi) = 0$ a.e.\ on $\{y = \psi\}$.
The formula  \eqref{eq:multiplierformula} now follows immediately. 
\end{proof}

The next result about the continuity of $S$ into higher-order Sobolev spaces
seems to be less known.
It can be found in \cite[Theorem 5.4.3]{Rodrigues1987} for the case $\psi = 0$
and in \cite{SchielaWachsmuth2013} for a regularized version
of the obstacle problem.
For convenience, we give its proof.
\begin{theorem}[Lipschitz Estimate for Higher Derivatives]
	\label{thm:hoelder}
	For all $u_1, u_2 \in L^2(\Omega)$ with associated states $y_1 := S(u_1)$, $y_2 := S(u_2)$, it holds
	\begin{equation}
	\label{eq:randomeq82333442}
		% \norm{ y_2 - y_1 }_{H^2(\Omega)} \le C \, \norm{ u_2 - u_1 }_{L^2(\Omega)}^{1/2}
		% \quad\text{and}\quad
		\norm{ \Delta(y_1 - y_2) }_{L^1} \le 2 \, \norm{ u_1 - u_2}_{L^1}.
	\end{equation}
	% holds
\end{theorem}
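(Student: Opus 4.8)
The plan is to use the explicit representation \eqref{eq:multiplierformula} of the multiplier. Write $y_i := S(u_i)$ and $\lambda_i := -\Delta y_i - u_i$, so that, by \cref{th:solutionmapproperties}, $\lambda_i \in L^2(\Omega)$, $\lambda_i \geq 0$, $\lambda_i = 0$ a.e.\ on $\{y_i > \psi\}$, and $\lambda_i = -\Delta\psi - u_i$ a.e.\ on $\{y_i = \psi\}$. Put $g := y_1 - y_2 \in H^2(\Omega) \cap H_0^1(\Omega)$ and $P := \lambda_1 - \lambda_2 \in L^2(\Omega)$, so that $-\Delta g = (u_1 - u_2) + P$ a.e.\ in $\Omega$. (Note that $u_1 - u_2 \in L^1(\Omega)$ since $\Omega$ is bounded, so the right-hand side of \eqref{eq:randomeq82333442} is finite.) The idea is that, on the part of $\Omega$ where $\Delta g$ does not vanish anyway, it suffices to bound $P$ in $L^1$ by $\norm{u_1-u_2}_{L^1}$.

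To this end I would decompose $\Omega$, up to a Lebesgue null set, into the four measurable sets
\begin{align*}
A_1 &:= \{y_1 > \psi\} \cap \{y_2 > \psi\}, & A_2 &:= \{y_1 = \psi\}\cap\{y_2 = \psi\}, \\
A_3 &:= \{y_1 = \psi\}\cap\{y_2 > \psi\}, & A_4 &:= \{y_1 > \psi\}\cap\{y_2 = \psi\},
\end{align*}
and read off from \eqref{eq:multiplierformula} the pointwise structure of $P$ and $g$: on $A_1$ one has $P = 0$; on $A_2$ one has $P = \lambda_1 - \lambda_2 = u_2 - u_1$, hence $-\Delta g = 0$ a.e.; on $A_3$ one has $P = \lambda_1 \geq 0$ while $g = \psi - y_2 < 0$; and on $A_4$ one has $P = -\lambda_2 \leq 0$ while $g = y_1 - \psi > 0$. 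In particular $P\,\sgn(g) = -|P|$ a.e.\ on $A_3 \cup A_4$.

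The heart of the argument is a Kato-type inequality. Choose odd, non-decreasing, continuously differentiable functions $\sigma_k \colon \R \to [-1,1]$ with $\sigma_k(s) \to \sgn(s)$ for every $s \in \R$ (with $\sgn(0) := 0$). Since $g \in H^2(\Omega) \cap H_0^1(\Omega)$, the function $\sigma_k(g) \in H_0^1(\Omega)$ is an admissible test function, and integrating by parts in $-\Delta g = (u_1-u_2)+P$ while using $\sigma_k' \geq 0$ gives
\begin{equation*}
0 \le \int_\Omega \sigma_k'(g)\,|\nabla g|^2 \dx = \int_\Omega \big((u_1 - u_2) + P\big)\,\sigma_k(g)\dx .
\end{equation*}
Letting $k \to \infty$ and invoking dominated convergence (with dominating functions $|u_1-u_2|, |P| \in L^1(\Omega)$), together with $\sigma_k(0)=0$, $P = 0$ on $A_1$, and the sign relations above, the right-hand side tends to $\int_\Omega (u_1-u_2)\,\sgn(g)\dx - \int_{A_3\cup A_4}|P|\dx$. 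Rearranging and using $|\sgn(g)|\le 1$ yields $\int_{A_3\cup A_4}|P|\dx \le \norm{u_1-u_2}_{L^1}$. Since $\Delta g = 0$ a.e.\ on $A_2$ and $P = 0$ a.e.\ on $A_1$, it then follows that
\begin{equation*}
\norm{\Delta g}_{L^1} = \int_{A_1}|u_1-u_2|\dx + \int_{A_3\cup A_4}\big|(u_1-u_2)+P\big|\dx \le \norm{u_1-u_2}_{L^1} + \int_{A_3\cup A_4}|P|\dx \le 2\,\norm{u_1-u_2}_{L^1},
\end{equation*}
which is \eqref{eq:randomeq82333442}.

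The only slightly delicate point is the legitimacy of the sign-test-function step and the passage to the limit therein, but this is classical. The substantive content is the sign relation between the multiplier difference $\lambda_1-\lambda_2$ and the state difference $y_1-y_2$ on $A_3 \cup A_4$, together with the vanishing of $\Delta(y_1-y_2)$ on the common contact set $A_2$; this is precisely what produces the constant $2$. Alternatively, one can run the same sign-testing argument on the penalized approximation $-\Delta y_i^\varepsilon = u_i + \tfrac{1}{\varepsilon}(y_i^\varepsilon-\psi)^-$, in which the penalty term plays the role of $\lambda_i$ and automatically carries the correct sign, and then pass to the limit $\varepsilon \to 0$ using the uniform bound \eqref{eq:H2reg}.
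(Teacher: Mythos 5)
Your argument is correct and follows essentially the same route as the paper's proof: testing the difference equation $-\Delta(y_1-y_2)=(u_1-u_2)+(\lambda_1-\lambda_2)$ with a smooth approximation of $\sgn(y_1-y_2)$, exploiting the sign relations between the multiplier difference and the state difference given by \eqref{eq:multiplierformula}, and concluding by the triangle inequality, which is exactly where the factor $2$ arises in the paper as well. The only cosmetic difference is the bookkeeping (your contact-set decomposition $A_1,\dots,A_4$ versus the paper's split into $\{y_1=y_2\}$ and $\{y_1\neq y_2\}$ with the implications \eqref{eq:randomimps22}), so no further changes are needed.
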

\begin{proof}
	We have
	\begin{equation}
	\label{eq:ranomeq82362728}
		-\Delta (y_2 - y_1)
		=
		u_2 - u_1 + \lambda_2 - \lambda_1
		,
	\end{equation}
	where $\lambda_i := -\Delta y_i - u_i$, $i = 1,2$.
	To derive \eqref{eq:randomeq82333442} from \eqref{eq:ranomeq82362728}, we proceed as in 
	\cite[Proof of Theorem~5.1]{ItoKunisch2000} and define
	\begin{equation*}
		\rho_\varepsilon(x)
		:=
  \max \left ( -1, \min \left (1, \frac{x}{\varepsilon} \right ) \right )
 		=
		\begin{cases}
			-1 & \text{for } x \le -\varepsilon, \\
			\frac x\varepsilon & \text{for } x \in (- \varepsilon,\varepsilon), \\
			1 & \text{for } x \ge \varepsilon. \\
		\end{cases}
	\end{equation*}
	Since $\rho_\varepsilon$ is continuous and piecewise affine, 
	Stampacchia's lemma yields
	$\rho_\varepsilon(y_2 - y_1) \in H_0^1(\Omega)$
	and 
	$\nabla \rho_\varepsilon(y_2 - y_1) = \rho_\varepsilon'(y_2 - y_1) \, \nabla (y_2 - y_1)$.
	In particular, we may choose $\rho_\varepsilon(y_2 - y_1)$ as a test function in \eqref{eq:ranomeq82362728} to obtain
	\begin{equation}
	\label{eq:randomeq183904738}
	\begin{aligned}
		0
		&\le
		\int_\Omega \rho_\varepsilon'(y_2 - y_1) \, |\nabla (y_2 - y_1)|^2
		\, \dx
		=
		\int_\Omega \nabla \big ( \rho_\varepsilon (y_2 - y_1) \big )\, \nabla (y_2 - y_1)
		\, \dx
		\\
		&\le
		\norm{u_2 - u_1}_{L^1(\{y_2 \ne y_1\})}
		+
		\int_\Omega \rho_\varepsilon(y_2 - y_1) \, (\lambda_2 - \lambda_1)\, \dx
		.
	\end{aligned}
	\end{equation}
	Here, we have exploited that $\rho_\varepsilon' \ge 0$ and $\abs{\rho_\varepsilon} \le 1$.
	Using the dominated convergence theorem,
	we can pass to the limit $\varepsilon \searrow 0$ in \eqref{eq:randomeq183904738}. This yields
	\begin{equation}
	\label{eq:randomeq1735365}
		-\int_\Omega \sgn(y_2 - y_1) \, (\lambda_2 - \lambda_1)\, \dx
		\le
		\norm{u_2 - u_1}_{L^1(\{y_2 \ne y_1\})}
		.
	\end{equation}
	Note that, for almost all $x \in \Omega$, we have 
	\begin{equation}
	\label{eq:randomimps22}
	\begin{aligned}
		y_2(x) = y_1(x) = \psi(x) & \qquad\Rightarrow \qquad \lambda_2(x) - \lambda_1(x) = u_1(x) - u_2(x),
		\\
		y_2(x) = y_1(x) > \psi(x) & \qquad\Rightarrow \qquad \lambda_2(x) - \lambda_1(x) = 0,
		\\
		y_2(x) > y_1(x) & \qquad\Rightarrow\qquad 0 = \lambda_2(x) \le \lambda_1(x),
		\\
		y_1(x) > y_2(x) & \qquad\Rightarrow\qquad 0 = \lambda_1(x) \le \lambda_2(x)
		.
	\end{aligned}
	\end{equation}
	From \eqref{eq:randomeq1735365} and the last two implications in \eqref{eq:randomimps22}, we obtain
	\begin{equation*}
		-\int_\Omega \sgn(y_2 - y_1) \, (\lambda_2 - \lambda_1)\, \dx
		=
		\int_{\set{y_2 \ne y_1}} \abs{\lambda_2 - \lambda_1}\, \dx
		\le
		\norm{u_2 - u_1}_{L^1(\{y_2 \ne y_1\})}
	.
	\end{equation*}
	Further, the first two implications in \eqref{eq:randomimps22} yield 
	\begin{equation*}
		\int_{\set{y_2 = y_1}} \abs{\lambda_2 - \lambda_1}\, \dx
		\le
		\norm{u_2 - u_1}_{L^1(\{y_2 = y_1\})}
		.
	\end{equation*}
	Putting everything together now gives
	\begin{equation*}
		\norm{\lambda_2 - \lambda_1}_{L^1 }
		\le
		\norm{u_2 - u_1}_{L^1 }
	\end{equation*}
	and, as a consequence, 
	\begin{equation*}
		\norm{\Delta(y_1 - y_2)}_{L^1 }
		\le
		2 \, \norm{u_1 - u_2}_{L^1 }
		.
	\end{equation*}
This proves the claim. 
\end{proof}

For the construction of our counterexamples,
we also need the following well-known comparison principle.

\begin{lemma}[Comparison Principle]
	\label{lem:comparison}
	Let $u_1,u_2 \in L^2(\Omega)$ be given such that $u_1 \le u_2$ holds a.e.\ in $\Omega$.
	Then, it also holds $S(u_1) \le S(u_2)$ a.e.\ in $\Omega$.
\end{lemma}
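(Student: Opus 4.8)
The plan is to use the variational characterization \eqref{eq:obstacleproblem} of the two solutions together with the standard "order test functions'' $\min(y_1,y_2)$ and $\max(y_1,y_2)$. Write $y_i := S(u_i)$ for $i=1,2$ and set $w := (y_1-y_2)^+ \in H_0^1(\Omega)$; the goal is to show $w = 0$.

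First I would observe that, since $y_1,y_2 \in K$, both $\min(y_1,y_2) = y_1 - w$ and $\max(y_1,y_2) = y_2 + w$ again belong to $K$: they lie in $H_0^1(\Omega)$ (lattice operations preserve $H_0^1$) and are $\geq \psi$ a.e.\ because $y_1$ and $y_2$ are. Hence $v = y_1 - w$ is admissible in \eqref{eq:obstacleproblem} for $y_1$ and $v = y_2 + w$ is admissible in \eqref{eq:obstacleproblem} for $y_2$. Testing with these choices (note $v - y_1 = -w$ and $v - y_2 = w$, respectively) yields
\begin{equation*}
\langle -\Delta y_1, w\rangle \le \langle u_1, w\rangle
\qquad\text{and}\qquad
\langle -\Delta y_2, w\rangle \ge \langle u_2, w\rangle .
\end{equation*}
Subtracting and using $u_1 \le u_2$ a.e.\ together with $w \ge 0$ (so that $\langle u_1 - u_2, w\rangle = \int_\Omega (u_1-u_2)\,w\,\dx \le 0$) gives $\langle -\Delta(y_1-y_2), w\rangle \le 0$.

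Next I would invoke Stampacchia's lemma (as in the proof of \cref{thm:hoelder}) to write $\nabla w = \mathds{1}_{\{y_1 > y_2\}}\,\nabla(y_1-y_2)$, so that after integration by parts
\begin{equation*}
\|\nabla w\|_{L^2}^2 = \int_\Omega \nabla(y_1-y_2)\cdot\nabla w\,\dx = \langle -\Delta(y_1-y_2), w\rangle \le 0 .
\end{equation*}
Consequently $\nabla w = 0$ and, by the Poincaré inequality, $w = 0$ in $H_0^1(\Omega)$, i.e.\ $(y_1-y_2)^+ = 0$ a.e.\ in $\Omega$, which is precisely the assertion $S(u_1) \le S(u_2)$.

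I do not expect a serious obstacle here. The only points requiring a little care are the verification that the lattice combinations of $y_1$ and $y_2$ are again feasible for the obstacle problem — immediate from the explicit form of $K$ in \cref{assumption:standing} — and the chain rule for the positive part, which is exactly the instance of Stampacchia's lemma already used above. Since $-\Delta y_i - u_i = \lambda_i \in L^2(\Omega)$ by \cref{th:solutionmapproperties}, all dual pairings above may equivalently be read as $L^2$-inner products, but this is not needed for the argument.
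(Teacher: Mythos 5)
Your proof is correct and follows essentially the same route as the paper: the paper likewise sets $\theta = \max(S(u_1)-S(u_2),0)$, tests the variational inequality for $u_1$ with $S(u_1)-\theta$ and for $u_2$ with $S(u_2)+\theta$, subtracts, and concludes $\theta=0$ via Stampacchia's lemma. Your additional remarks (feasibility of the lattice combinations, Poincar\'e) only make explicit steps the paper leaves implicit.
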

\begin{proof}
	We have to show that $\theta := \max\paren[\big]{S(u_1) - S(u_2), 0} = 0$.
	Clearly, $S(u_1) - \theta \ge \psi$.
	Hence, we can test the VI \eqref{eq:obstacleproblem} for $u = u_1$
	with $v = S(u_1) - \theta$
	and for $u = u_2$ with $v = S(u_2) + \theta$.
	Subtraction of the resulting inequalities and an application of
	Stampacchia's lemma yield
	\begin{equation*}
		-\int_\Omega \abs{\nabla \theta}^2 \, \dx
		\ge
		\scalarprod{u_2 - u_1}{\theta}_{L^2}
		\ge
		0.
	\end{equation*}
	Thus, $\theta = 0$ and the proof is complete. 
\end{proof}

From the properties of the solution operator $S$ and the conditions in \cref{assumption:standing}, 
we immediately obtain
the following two results:

\begin{corollary}[Existence of Optimal Controls]
\label{cor:optexistence}
The optimal control problem \eqref{eq:P}  admits at least one global 
solution $\bar u \in U_{ad}$. 
\end{corollary}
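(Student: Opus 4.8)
The plan is to apply the direct method of the calculus of variations to the reduced problem, i.e., to minimize the functional $u \mapsto J(S(u),u) = j(S(u)) + \frac{\alpha}{2}\|u\|_{L^2}^2$ over $U_{ad}$, where $S$ denotes the control-to-state operator of \cref{th:solutionmapproperties}. First note that $U_{ad}$ is non-empty and convex: the constant function $0$ lies in $U_{ad}$ because of the sign condition $u_a \le 0 \le u_b$ in \cref{assumption:standing}, and convexity is clear. Since $j$ is bounded from below and $\alpha > 0$, the reduced functional is bounded from below on $U_{ad}$, so we may fix a minimizing sequence $(u_n) \subset U_{ad}$ with $J(S(u_n),u_n) \to m := \inf_{u \in U_{ad}} J(S(u),u) \in \R$.

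The key point is coercivity: as $U_{ad}$ need not be bounded (the functions $u_a$, $u_b$ may take the values $\mp\infty$), we rely on the Tikhonov term. From the boundedness of $J(S(u_n),u_n)$ and the lower bound on $j$ we deduce that $(\|u_n\|_{L^2})$ is bounded, hence, passing to a subsequence without relabeling, $u_n \weakly \bar u$ in $L^2(\Omega)$ for some $\bar u$; and $\bar u \in U_{ad}$ since $U_{ad}$ is convex and closed, thus weakly closed, in $L^2(\Omega)$. To pass to the limit in the objective, I would use that the embedding $L^2(\Omega) \hookrightarrow H^{-1}(\Omega)$ is compact (being adjoint to the compact embedding $H_0^1(\Omega) \hookrightarrow L^2(\Omega)$), so that $u_n \to \bar u$ strongly in $H^{-1}(\Omega)$. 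By the global Lipschitz continuity of $S$ as a map from $H^{-1}(\Omega)$ to $H_0^1(\Omega)$ from \cref{th:solutionmapproperties}, this yields $S(u_n) \to S(\bar u)$ strongly in $H_0^1(\Omega)$.

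It now suffices to combine the continuity of $j : H_0^1(\Omega) \to \R$, which gives $j(S(u_n)) \to j(S(\bar u))$, with the weak lower semicontinuity of $\|\cdot\|_{L^2}$, which gives $\|\bar u\|_{L^2}^2 \le \liminf_n \|u_n\|_{L^2}^2$, to conclude $J(S(\bar u),\bar u) \le \liminf_n J(S(u_n),u_n) = m$. Hence $\bar u$ is a global minimizer and $(S(\bar u),\bar u)$ is a global solution of \eqref{eq:P}. The only slightly delicate step is the passage to the limit through the non-smooth variational-inequality constraint, but this is entirely taken care of by the compactness of $L^2(\Omega) \hookrightarrow H^{-1}(\Omega)$ together with the continuity of $S : H^{-1}(\Omega) \to H_0^1(\Omega)$; no weak-convergence argument inside the variational inequality \eqref{eq:obstacleproblem} is required. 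Alternatively, one could invoke the $H^2$-bound \eqref{eq:H2reg} and the compact embedding $H^2(\Omega) \hookrightarrow H_0^1(\Omega)$ to obtain $H_0^1$-convergence of the states and then pass to the limit in \eqref{eq:obstacleproblem} directly, but the argument via $S$ is shorter.
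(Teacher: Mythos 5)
Your proof is correct and follows exactly the route the paper sketches: the direct method with the Tikhonov term providing coercivity, weak $L^2$-compactness of the minimizing sequence, weak closedness of $U_{ad}$, the compact embedding $L^2(\Omega) \hookrightarrow H^{-1}(\Omega)$, and the Lipschitz continuity of $S : H^{-1}(\Omega) \to H_0^1(\Omega)$ to pass to the limit in the state, finishing with continuity of $j$ and weak lower semicontinuity of the $L^2$-norm. No discrepancies to report.
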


\begin{proof}
The assertion follows straightforwardly from the direct method of calculus of variations, 
the boundedness from below of the function $j$, the continuity of $S$ as a function from $H^{-1}(\Omega)$ to $H_0^1(\Omega)$,
and the compactness of the embedding $L^2(\Omega) \hookrightarrow H^{-1}(\Omega)$. 
\end{proof}
 
\begin{corollary}[Bouligand Stationarity Condition]
\label{cor:BouligandStationarityCondition1}
Every local solution $\bar u$ of the optimal control problem \eqref{eq:P} with associated state $\bar y := S(\bar u)$ satisfies
\begin{equation}
\label{eq:Bouligandstatcond}
\left \langle j'(\bar y), S'(\bar u; h) \right \rangle + \alpha \left ( \bar u, h \right )_{L^2} \geq 0 \qquad 
\forall h \in 
\TT_{U_{ad}}(\bar u).
\end{equation}
Here, $\TT_{U_{ad}}(\bar u)$ denotes the tangent cone to $U_{ad}$ at $\bar u$, i.e.,
$\TT_{U_{ad}}(\bar u) := \closure_{L^2}\left (\R^+\left (U_{ad} - \bar u \right ) \right )$. 
\end{corollary}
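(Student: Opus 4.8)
The plan is to derive the variational inequality \eqref{eq:Bouligandstatcond} directly from the definition of local optimality by testing the reduced objective along feasible directions. Let $\bar u$ be a local solution of \eqref{eq:P} with state $\bar y = S(\bar u)$, and introduce the reduced functional $j(S(u)) + \tfrac{\alpha}{2}\|u\|_{L^2}^2$ on $U_{ad}$. First I would fix $v \in U_{ad}$ and consider the convex combinations $u_t := \bar u + t(v - \bar u) \in U_{ad}$ for $t \in (0,1]$, which are admissible because $U_{ad}$ is convex. Local optimality gives $J(S(u_t), u_t) \ge J(S(\bar u), \bar u)$ for $t$ small. Dividing by $t$ and letting $t \searrow 0$, the Tikhonov term contributes $\alpha(\bar u, v - \bar u)_{L^2}$ in the limit (a routine expansion of $\tfrac{\alpha}{2}\|\bar u + t(v-\bar u)\|_{L^2}^2$), while the term $j(S(u_t))$ is handled via the chain rule: by \cref{th:solutionmapproperties}, $S$ is directionally differentiable from $H^{-1}(\Omega)$ to $H_0^1(\Omega)$, so $\tfrac{1}{t}\big(S(u_t) - S(\bar u)\big) \to S'(\bar u; v - \bar u)$ in $H_0^1(\Omega)$, and since $j$ is continuously Fréchet differentiable on $H_0^1(\Omega)$, the difference quotient of $j \circ S$ converges to $\langle j'(\bar y), S'(\bar u; v - \bar u)\rangle$.

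This yields $\langle j'(\bar y), S'(\bar u; v - \bar u)\rangle + \alpha(\bar u, v - \bar u)_{L^2} \ge 0$ for every $v \in U_{ad}$, i.e., the inequality on the convex set $U_{ad} - \bar u$ directly. To upgrade this to all of $\TT_{U_{ad}}(\bar u) = \closure_{L^2}(\R^+(U_{ad} - \bar u))$, I would use that the map $h \mapsto \langle j'(\bar y), S'(\bar u; h)\rangle + \alpha(\bar u, h)_{L^2}$ is positively homogeneous (the directional derivative $S'(\bar u; \cdot)$ is positively homogeneous by construction) and Lipschitz continuous on $L^2(\Omega)$: indeed, $S'(\bar u; \cdot)$ is globally Lipschitz as the directional derivative of the globally Lipschitz map $S \colon H^{-1}(\Omega) \to H_0^1(\Omega)$, and the embedding $L^2(\Omega) \hookrightarrow H^{-1}(\Omega)$ is continuous, so the whole expression is Lipschitz from $L^2(\Omega)$ to $\R$. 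Positive homogeneity extends the inequality from $U_{ad} - \bar u$ to the cone $\R^+(U_{ad} - \bar u)$, and Lipschitz continuity extends it to the $L^2$-closure, which is exactly $\TT_{U_{ad}}(\bar u)$.

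The only mildly delicate point is the convergence of the difference quotient $\tfrac{1}{t}(S(u_t) - \bar y)$ in the \emph{right} topology: directional differentiability of $S$ is available as a map into $H_0^1(\Omega)$ (from $H^{-1}(\Omega)$), so the pairing with $j'(\bar y) \in H^{-1}(\Omega)$ makes sense and passes to the limit; one must make sure to invoke the $H^{-1} \to H_0^1$ differentiability from \cref{th:solutionmapproperties} rather than the $L^2 \to L^\infty$ Lipschitz property. No genuine obstacle is expected here — this is the standard first-order necessary condition for minimization of a directionally differentiable function over a convex set, and all the required regularity of $S$ and $j$ is already recorded.
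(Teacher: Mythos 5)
Your proposal is correct and follows essentially the same route as the paper, which simply invokes the directional differentiability and Lipschitz continuity of $S$ from $H^{-1}(\Omega)$ to $H_0^1(\Omega)$, local optimality, and the chain rule of \cite[Proposition 2.47]{BonnansShapiro2000}; you merely spell out the standard details (difference quotients along $\bar u + t(v-\bar u)$, then extension to $\TT_{U_{ad}}(\bar u)$ by positive homogeneity and Lipschitz continuity). No gaps.
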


\begin{proof}
The claim follows immediately from the directional differentiability and Lipschitz continuity
of the solution map $S$ as a function from $H^{-1}(\Omega)$ to $H_0^1(\Omega)$,
the local optimality of $\bar u$,
and the chain rule, see \cite[Proposition 2.47]{BonnansShapiro2000}. 
\end{proof}

Although very natural, the Bouligand stationarity condition \eqref{eq:Bouligandstatcond} 
is typically of little use in practical applications. A more convenient stationarity 
concept is the following: 

\begin{definition}[Strong Stationarity Condition]
A point  $\bar u \in U_{ad}$ 
with associated state $\bar y \in H_0^1(\Omega)$ and multiplier $\bar \lambda := -\Delta \bar y - \bar u \in L^2(\Omega)$
is called strongly stationary for \eqref{eq:P}
if there exists a triple $(\bar p, \bar \nu, \bar \eta) \in H_0^1(\Omega) \times L^2(\Omega) \times H^{-1}(\Omega)$
such that
\begin{subequations}
\label{eq:strongstationarity}
\begin{align}
\label{eq:strongstationarity_1}
-\Delta \bar p + \bar \eta - j'(\bar y)& = 0 \text{ in } H^{-1}(\Omega),
\\
\label{eq:strongstationarity_2}
\alpha \bar u + \bar p - \bar \nu &=0 \text{ in } L^2(\Omega),
\\
\label{eq:strongstationarity_3}
\bar p &\in \TT_K(\bar y) \cap \bar \lambda^\perp,
\\
\label{eq:strongstationarity_4}
\left \langle \bar \eta, z \right \rangle  &\geq 0\quad 
\forall z \in \TT_K(\bar y) \cap \bar \lambda^\perp,
\\
\label{eq:strongstationarity_5}
\left ( \bar \nu, h \right  )_{L^2}  &\geq 0\quad 
\forall h \in \TT_{U_{ad}}(\bar u).
\end{align}
\end{subequations}
Here,  $\TT_{U_{ad}}(\bar u) := \closure_{L^2}\left (\R^+\left (U_{ad} - \bar u \right ) \right ) $ 
and $\TT_K(\bar y) := \closure_{H^1}(\R^+(K - \bar y))$ again denote the tangent cones to 
$U_{ad}$ and $K$ at $\bar u$ and $\bar y$, respectively, and $\bar \lambda^\perp$ is the kernel of $\bar \lambda$.
\end{definition}

Note that, for every strongly stationary point $\bar u \in U_{ad}$, 
the system \eqref{eq:strongstationarity}, the variational inequality \eqref{eq:VIdirdiff}
and the fact that $\TT_K(\bar y) \cap \bar \lambda^\perp$ is a convex cone imply 
\begin{equation}
\label{eqrandomeq82636}
\begin{aligned}
&\left \langle j'(\bar y), S'(\bar u; h) \right \rangle + \alpha \left ( \bar u, h \right )_{L^2} 
\\
&\qquad = \left \langle -\Delta \bar p + \bar \eta, S'(\bar u; h) \right \rangle + \left ( -\bar p + \bar \nu, h \right )_{L^2} 
\\
&\qquad =  \left \langle -\Delta  S'(\bar u; h) - h, S'(\bar u; h) + \bar p - S'(\bar u; h)\right \rangle 
+ \left \langle \bar \eta, S'(\bar u; h) \right \rangle
+ \left (\bar \nu, h \right )_{L^2} 
\\
&\qquad \geq 0\qquad \forall h \in \TT_{U_{ad}}(\bar u).
\end{aligned}
\end{equation}
Strongly stationary points are thus always Bouligand stationary in the sense of 
\eqref{eq:Bouligandstatcond}.
We would like to emphasize that the converse of this implication, i.e., 
\eqref{eq:Bouligandstatcond} $\Rightarrow$ \eqref{eq:strongstationarity}, 
does not hold in general. See, e.g., \cite[Section 6]{StrongStationarityConstraints2014} 
for two counterexamples.  
However, under mild assumptions on the data, it is possible to prove that 
\eqref{eq:strongstationarity} is indeed a necessary optimality condition for \eqref{eq:P}.
More precisely, the following can be established:

\begin{theorem}[Strong Stationarity as a Necessary Optimality Condition]
\label{th:strongstationaritynecessary}
Suppose that $\bar u$ is a local solution of \eqref{eq:P}
with associated state $\bar y := S(\bar u)$ and multiplier $\bar \lambda := -\Delta \bar y - \bar u$.
Assume further that one of the following conditions is satisfied:
\begin{enumerate}
\item $u_a = -\infty$ and $u_b  = \infty$. \label{item:strong-i}
% \item $\bar u \in H_0^1(\Omega)$, 
% $u_a, u_b \in H^1(\Omega)$ and $u_a \leq 0 < u_b$ quasi-everywhere  in $\Omega$.\label{item:strong-ii}
\item 
$u_a, u_b \in H^1(\Omega)$
 and $u_a \leq 0 < u_b$ quasi-everywhere in $\Omega$. \label{item:strong-iii}
\end{enumerate}
Then, there exists a triple $(\bar p, \bar \nu, \bar \eta) \in H_0^1(\Omega) \times L^2(\Omega) \times H^{-1}(\Omega)$
such that $\bar u$, $\bar y$, $\bar \lambda$, $\bar p$, $\bar \nu$, and $\bar \eta$ satisfy
the strong stationarity system \eqref{eq:strongstationarity}. 
\end{theorem}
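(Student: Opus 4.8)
The plan is to derive the strong stationarity system \eqref{eq:strongstationarity} from the Bouligand stationarity condition \eqref{eq:Bouligandstatcond} of \cref{cor:BouligandStationarityCondition1} by a careful analysis of the variational inequality \eqref{eq:VIdirdiff} characterizing the directional derivatives $S'(\bar u;h)$. First I would introduce the critical cone $\mathcal{K}(\bar y) := \TT_K(\bar y) \cap \bar\lambda^\perp$, which by \cref{th:solutionmapproperties} is a closed convex cone in $H_0^1(\Omega)$, and recall that $\delta_h := S'(\bar u;h)$ is the metric projection (in the $-\Delta$ energy inner product) of the solution of $-\Delta w = h$ onto $\mathcal{K}(\bar y)$. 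The key structural fact is that, in the unconstrained-control case \ref{item:strong-i}, the set of attainable directional derivatives $\{S'(\bar u;h) : h \in L^2(\Omega)\}$ is dense in $\mathcal{K}(\bar y)$ with respect to the $H_0^1$-topology, because $-\Delta$ maps $L^2(\Omega)$ onto a dense subset of $H^{-1}(\Omega)$ and the projection is Lipschitz; hence \eqref{eq:Bouligandstatcond} forces $\langle j'(\bar y), z\rangle + \alpha(\bar u, w)_{L^2} \geq 0$ for all $z \in \mathcal{K}(\bar y)$ and all $w$ with $-\Delta w = $ (the functional defining $z$)—more precisely one tests with $h$ such that $\delta_h$ ranges over a dense subset of $\mathcal{K}(\bar y)$, using that $\mathcal{K}(\bar y)$ is a cone to get a homogeneous inequality.

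Next I would set up the adjoint equation. Define $\bar p \in H_0^1(\Omega)$ as the solution of a suitable projected problem so that \eqref{eq:strongstationarity_3} holds, i.e.\ $\bar p \in \mathcal{K}(\bar y)$; the natural candidate is to let $\bar p$ solve the variational inequality that is dual/adjoint to \eqref{eq:VIdirdiff} with right-hand side built from $j'(\bar y)$ and $\alpha\bar u$. Concretely, using a Riesz-type argument on the cone $\mathcal{K}(\bar y)$, one obtains $\bar p \in \mathcal{K}(\bar y)$ and an element $\bar\eta \in H^{-1}(\Omega)$ with $-\Delta \bar p + \bar\eta = j'(\bar y)$ (this defines $\bar\eta$) and, from the polarity relations of the projection, $\langle \bar\eta, z\rangle \geq 0$ for all $z \in \mathcal{K}(\bar y)$, which is \eqref{eq:strongstationarity_4} and \eqref{eq:strongstationarity_1}. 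Then \eqref{eq:strongstationarity_2} simply defines $\bar\nu := \alpha\bar u + \bar p \in L^2(\Omega)$ (note $\bar p \in H_0^1 \hookrightarrow L^2$ since $d \leq 3$), and the remaining sign condition \eqref{eq:strongstationarity_5}, $(\bar\nu, h)_{L^2} \geq 0$ for all $h \in \TT_{U_{ad}}(\bar u)$, must be extracted from \eqref{eq:Bouligandstatcond} by testing with directions $h$ for which $S'(\bar u;h) = 0$—these are exactly the $h$ with $-\Delta(\text{solution}) \in $ the polar cone, and for such $h$ the Bouligand inequality collapses to $(\alpha\bar u + \bar p, h)_{L^2} \geq 0$ after substituting the adjoint identities. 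One must check that enough such $h$ lie in $\TT_{U_{ad}}(\bar u)$; this is where the hypotheses of case \ref{item:strong-iii} enter.

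For case \ref{item:strong-iii}, the complication is that $\TT_{U_{ad}}(\bar u)$ is no longer all of $L^2(\Omega)$, so the density argument producing the full cone $\mathcal{K}(\bar y)$ of test directions breaks down, and one must instead work with the intersection of $\mathcal{K}(\bar y)$ with a cone coming from the control constraints. Here I would use the hypotheses $u_a, u_b \in H^1(\Omega)$ and $u_a \leq 0 < u_b$ quasi-everywhere: the quasi-everywhere strict inequality $u_b > 0 \geq u_a$, combined with the fact that the multiplier $\bar\lambda \geq 0$ is "active" only where $\bar y = \psi$, ensures that on the coincidence set the control $\bar u = -\Delta\bar y - \bar\lambda$ stays strictly inside the box (this is the "simple observation" the paper advertises, and it should make $\bar\nu$ supported away from the part of the active set that would obstruct the construction), so that the needed test directions $h$ with $S'(\bar u;h)=0$ can be chosen in $\TT_{U_{ad}}(\bar u)$ after a truncation/approximation argument in $H^1(\Omega)$. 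The main obstacle I anticipate is precisely this compatibility between the two cones—showing that $\TT_{U_{ad}}(\bar u) \cap \{h : S'(\bar u;h) = 0\}$ is large enough, and dually that the quasi-continuous representatives of $u_a, u_b$ interact correctly with the capacity-theoretic description of $\TT_K(\bar y)$ and $\bar\lambda^\perp$; this requires the $H^1$-regularity of $u_a, u_b$ to run a Stampacchia-type truncation and to invoke that $H^1$-functions have quasi-continuous representatives so that pointwise-q.e.\ constraints pass to the limit. I would handle it by first proving the result in case \ref{item:strong-i} cleanly via the density argument, and then adapting the functional-analytic separation/Riesz step to the smaller cone in case \ref{item:strong-iii}, citing the polyhedricity of $K$ and the capacity framework from \cite{Mignot1976,Wachsmuth2016:2} as needed.
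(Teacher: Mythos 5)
Your proposal has a genuine gap at its core: the construction of $(\bar p,\bar\nu,\bar\eta)$ is disconnected from the one condition that makes strong stationarity hard. You define $\bar p$ abstractly as the solution of a ``projected/adjoint'' problem so that \eqref{eq:strongstationarity_3} and \eqref{eq:strongstationarity_4} hold by construction, then set $\bar\nu:=\alpha\bar u+\bar p$, and postpone \eqref{eq:strongstationarity_5}. But in case \ref{item:strong-i} we have $\TT_{U_{ad}}(\bar u)=L^2(\Omega)$, so \eqref{eq:strongstationarity_5} forces $\bar\nu=0$, i.e.\ $\bar p=-\alpha\bar u$ exactly. Hence $\bar p$ cannot be chosen freely; the actual content of the theorem is that $-\alpha\bar u$ itself belongs to $H_0^1(\Omega)\cap\TT_K(\bar y)\cap\bar\lambda^\perp$ and that $\bar\eta:=j'(\bar y)+\alpha\Delta\bar u$ is nonnegative on the critical cone. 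Your sketch never addresses the $H^1$-regularity of $\bar u$ nor the sign/orthogonality of $\bar u$ on the active set, which is precisely the capacity-theoretic heart of Mignot's argument (the density of $\{S'(\bar u;h):h\in L^2\}$ in $\TT_K(\bar y)\cap\bar\lambda^\perp$, which you correctly observe, only delivers \eqref{eq:strongstationarity_4} \emph{after} one already knows $\bar p=-\alpha\bar u\in H_0^1(\Omega)$, so the argument as ordered is circular). Moreover, your concrete mechanism for \eqref{eq:strongstationarity_5} — testing \eqref{eq:Bouligandstatcond} with directions $h$ satisfying $S'(\bar u;h)=0$ — cannot work: such $h$ form only the cone $\left(\TT_K(\bar y)\cap\bar\lambda^\perp\right)^\circ\cap L^2(\Omega)$, not a subspace, so no choice of them can yield the equality $\bar\nu=0$ required in case \ref{item:strong-i}; and for such $h$ Bouligand stationarity gives $(\alpha\bar u,h)_{L^2}\ge 0$ while $\bar p\in\TT_K(\bar y)\cap\bar\lambda^\perp$ gives $(\bar p,h)_{L^2}\le 0$, so the claimed ``collapse'' to $(\alpha\bar u+\bar p,h)_{L^2}\ge 0$ does not follow.

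For comparison, the paper does not reprove these facts: in case \ref{item:strong-i} it invokes Mignot's classical result directly, and in case \ref{item:strong-iii} it first obtains \emph{weak} stationarity from the literature, reads off the projection formula $\bar u=\max(u_a,\min(u_b,-\bar p/\alpha))$, uses Stampacchia's lemma together with $u_a,u_b\in H^1(\Omega)$ to conclude $\bar u\in H_0^1(\Omega)$, and then applies the known strong-stationarity theorem for control-constrained problems, whose hypotheses ($H^1$-regular control, $u_a\le 0<u_b$ q.e.) are exactly what this regularity step supplies. If you want a self-contained proof, you would have to reproduce the substance of those cited results — in particular the regularity and sign properties of $\bar u$ on the contact set — which your current outline does not do.
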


Here and in what follows, with quasi-everywhere (q.e.), 
we mean pointwise everywhere up to
sets of $H_0^1(\Omega)$-capacity zero. 
Note that $H_0^1(\Omega)$-q.e.\ in $\Omega$ implies $H^1(\Omega)$-q.e.\ in $\Omega$ and vice versa; see
\cite[Corollary 6.2]{ChristofMuller2018}.
We may thus indeed  write ``$u_a \leq 0 < u_b$ q.e.\ in $\Omega$''
for $u_a, u_b \in H^1(\Omega)$
without any danger of confusion. 
For more details on this topic and the involved concepts, 
we refer to \cite{ChristofMuller2018,Harder2018,BonnansShapiro2000}.

\begin{proof}[Proof of \cref{th:strongstationaritynecessary}]
In case \ref{item:strong-i}, the existence of a triple $(\bar p, \bar \nu, \bar \eta)$
with  \eqref{eq:strongstationarity} follows
from classical results of Mignot, see
\cite[Proposition~4.1]{Mignot1976} and also \cite{Harder2018}, 
\cite[Corollary 6.1.11]{Christof2018Phd}. 
% In case \ref{item:strong-ii},  \eqref{eq:strongstationarity} is an immediate 
% consequence of  \cite[Theorem 5.2, Lemma~5.3]{StrongStationarityConstraints2014}. 
It remains to prove the necessity of the strong stationarity 
system in case \ref{item:strong-iii}.
So let us assume that \ref{item:strong-iii} holds and that $\bar u$ is locally optimal for \eqref{eq:P}. 
Then, 
\cite[Lemma~4.4]{Wachsmuth2014:2} 
yields
 that the tuple $(\bar y, \bar u)$
 is weakly stationary for \eqref{eq:P}.
 In particular, there exists a function $\bar p \in H_0^1(\Omega)$
with
\begin{equation*}
\left ( \alpha \bar u + \bar p, u - \bar u \right )_{L^2} \geq 0 \quad \forall u \in U_{ad}
,\qquad\text{i.e.,}\qquad
\bar u =\max \left (u_a, \min\left (u_b, -\frac{1}{\alpha}\bar p\right ) \right )
.
\end{equation*}
Due to Stampacchia's lemma and the $H^1$-regularity of $u_a$ and $u_b$,
the above implies $\bar u \in H_0^1(\Omega)$. 
Using this regularity and \cite[Theorem 5.2, Lemma 5.3]{StrongStationarityConstraints2014},
the necessity of \eqref{eq:strongstationarity} in case \ref{item:strong-iii} follows immediately. This completes the proof. 
\end{proof}

In the remainder of this paper, we will often simply assume 
that a strongly stationary point $\bar u$ is given. 
The reader should keep in mind that,
by \cref{cor:optexistence,th:strongstationaritynecessary}, 
the existence of such a point and the necessity of the system \eqref{eq:strongstationarity} can be guaranteed under
comparatively mild additional assumptions on the bounds $u_a$ and $u_b$ in \eqref{eq:P}.

\section{SSC Involving Compatibility Conditions}
\label{sec:3}

Having established the existence of optimal controls and the stationarity conditions 
\eqref{eq:Bouligandstatcond} and \eqref{eq:strongstationarity}, 
we now turn our attention to second-order sufficient optimality conditions (SSC) for the problem \eqref{eq:P}. 
To the authors' knowledge, the only contributions that provide such conditions 
so far are \cite[Théorème~4.1]{Mignot1976}, \cite{KunischWachsmuth2012} and \cite{AliDeckelnickHinze2018},
where optimal control problems governed by the obstacle problem 
without control constraints are considered.
In these three papers, the basic idea of the analysis is to employ 
a Taylor-like expansion of the reduced objective function $J(S(u), u)$ 
and certain compatibility assumptions on the sign, the size, or the growth of the multipliers $\bar p$ and $\bar \eta$
in relation to the primal quantities $\bar y $ and $\bar \lambda$
to establish conditions that are sufficient for local or global optimality. 
In the situation of problem \eqref{eq:P}, we can use the system \eqref{eq:strongstationarity}
to obtain a similar expansion of the reduced objective function as the following lemma shows.

\begin{lemma}
\label{lemma:Taylorlikeexpansion}
Suppose that $\bar u \in U_{ad}$ satisfies the strong stationarity system \eqref{eq:strongstationarity} of \eqref{eq:P}
with a triple $(\bar p, \bar \nu, \bar \eta) \in H_0^1(\Omega) \times L^2(\Omega) \times H^{-1}(\Omega)$,
state $\bar y := S(\bar u) \in H_0^1(\Omega)$ and multiplier $\bar \lambda := -\Delta \bar y - \bar u \in L^2(\Omega)$. 
Then, for 
every $u \in U_{ad}$ with associated state $y := S(u)$ and multiplier $\lambda := -\Delta y - u$, it holds
\begin{equation}
\label{eq:Taylorexpansion}
\begin{aligned}
 J(y, u) - J(\bar y, \bar u)
&  =
\left \langle  \bar p  , \lambda - \bar \lambda \right \rangle +
\left \langle  \bar \eta, y - \bar y \right \rangle + ( \bar \nu, u - \bar u)_{L^2}
\\
&\quad\qquad+ \int_0^1 (1 - s) j''( (1 - s)\bar y+ s y )(y - \bar y)^2 \mathrm{d}s + \frac{\alpha}{2} \|u - \bar u\|_{L^2}^2.
\end{aligned}
\end{equation} 
Here, $\smash{j''(v)z^2}$ is short for $\smash{j''(v)(z,z)}$ for all $\smash{v, z \in H_0^1(\Omega)}$. 
\end{lemma}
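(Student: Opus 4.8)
The plan is to split the objective difference into its state part $j(y)-j(\bar y)$ and its control part $\tfrac{\alpha}{2}\bigl(\|u\|_{L^2}^2-\|\bar u\|_{L^2}^2\bigr)$, expand each separately by elementary means, and then use the equations of the strong stationarity system \eqref{eq:strongstationarity} to rewrite the emerging first-order terms. First I would invoke the second-order Taylor formula with integral remainder for $j$, which is legitimate since $j$ is twice continuously Fréchet differentiable, to obtain
\begin{equation*}
	j(y)-j(\bar y)=\left\langle j'(\bar y),y-\bar y\right\rangle+\int_0^1(1-s)\,j''\bigl((1-s)\bar y+sy\bigr)(y-\bar y)^2\,\mathrm ds,
\end{equation*}
and for the quadratic control term I would use the identity $\tfrac{\alpha}{2}\|u\|_{L^2}^2-\tfrac{\alpha}{2}\|\bar u\|_{L^2}^2=\alpha(\bar u,u-\bar u)_{L^2}+\tfrac{\alpha}{2}\|u-\bar u\|_{L^2}^2$. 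After these two steps, the asserted formula \eqref{eq:Taylorexpansion} reduces to the algebraic identity
\begin{equation*}
	\left\langle j'(\bar y),y-\bar y\right\rangle+\alpha(\bar u,u-\bar u)_{L^2}=\left\langle\bar p,\lambda-\bar\lambda\right\rangle+\left\langle\bar\eta,y-\bar y\right\rangle+(\bar\nu,u-\bar u)_{L^2}.
\end{equation*}

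To establish this identity, I would substitute $j'(\bar y)=-\Delta\bar p+\bar\eta$ from \eqref{eq:strongstationarity_1}. By \cref{th:solutionmapproperties} we have $y,\bar y\in H^2(\Omega)\cap H_0^1(\Omega)$ and $-\Delta y=u+\lambda$, $-\Delta\bar y=\bar u+\bar\lambda$ with $\lambda,\bar\lambda\in L^2(\Omega)$, so $w:=y-\bar y\in H_0^1(\Omega)\cap H^2(\Omega)$ satisfies $-\Delta w=(u-\bar u)+(\lambda-\bar\lambda)\in L^2(\Omega)$. Integrating by parts, i.e.\ using $\left\langle-\Delta\bar p,w\right\rangle=\left\langle\bar p,-\Delta w\right\rangle$, then turns $\left\langle j'(\bar y),y-\bar y\right\rangle$ into $(\bar p,u-\bar u)_{L^2}+\left\langle\bar p,\lambda-\bar\lambda\right\rangle+\left\langle\bar\eta,y-\bar y\right\rangle$. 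Finally I would insert $\alpha\bar u=\bar\nu-\bar p$ from \eqref{eq:strongstationarity_2} to get $\alpha(\bar u,u-\bar u)_{L^2}=(\bar\nu,u-\bar u)_{L^2}-(\bar p,u-\bar u)_{L^2}$; the two occurrences of $(\bar p,u-\bar u)_{L^2}$ cancel, and the desired identity — and hence \eqref{eq:Taylorexpansion} — follows.

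There is no genuine obstacle here; the only points that need a little care are bookkeeping ones. First, one must check that every pairing is well defined: $\left\langle\bar p,\lambda-\bar\lambda\right\rangle$ and $\left\langle\bar\eta,y-\bar y\right\rangle$ are dual pairings between $H_0^1(\Omega)$ and $H^{-1}(\Omega)$, which makes sense since $\lambda-\bar\lambda\in L^2(\Omega)\hookrightarrow H^{-1}(\Omega)$ and $y-\bar y\in H_0^1(\Omega)$, while $\bar\eta\in H^{-1}(\Omega)$ by assumption. Second, the integration by parts used above must be justified, which is exactly where the $H^2$-regularity supplied by \cref{th:solutionmapproperties} enters. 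Note that the conditions \eqref{eq:strongstationarity_3}, \eqref{eq:strongstationarity_4} and \eqref{eq:strongstationarity_5} play no role in the derivation of \eqref{eq:Taylorexpansion} itself; only \eqref{eq:strongstationarity_1} and \eqref{eq:strongstationarity_2} are needed. The sign conditions become relevant only afterwards, when one wishes to turn the exact expansion \eqref{eq:Taylorexpansion} into a one-sided estimate or a quadratic growth statement.
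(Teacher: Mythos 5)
Your proposal is correct and follows essentially the same route as the paper's proof: a second-order Taylor expansion of the reduced objective (the exact quadratic identity for the Tikhonov term), followed by substitution of \eqref{eq:strongstationarity_1}--\eqref{eq:strongstationarity_2} and the weak integration by parts $\left\langle -\Delta\bar p, y-\bar y\right\rangle = (\bar p, u-\bar u)_{L^2} + \left\langle \bar p, \lambda-\bar\lambda\right\rangle$. Your bookkeeping remarks (well-definedness of the pairings, irrelevance of the sign conditions \eqref{eq:strongstationarity_3}--\eqref{eq:strongstationarity_5} at this stage) are accurate and consistent with the paper.
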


\begin{proof}
From the fundamental theorem of calculus, we obtain
\begin{equation*}
\begin{aligned}
J(y, u) - J(\bar y, \bar u)
&= 
\left \langle j'(\bar y), y - \bar y \right \rangle + (\alpha \bar u, u - \bar u)_{L^2}
\\
&\qquad+ \int_0^1 (1 - s) j''( (1 - s)\bar y+ s y )(y - \bar y)^2 \mathrm{d}s + \frac{\alpha}{2} \|u - \bar u\|_{L^2}^2.
\end{aligned}
\end{equation*}
Further, \eqref{eq:strongstationarity} implies
\begin{equation*}
\begin{aligned}
\left \langle j'(\bar y), y - \bar y \right \rangle + (\alpha \bar u, u - \bar u)_{L^2}
&= 
\left \langle -\Delta \bar p + \bar \eta, y - \bar y \right \rangle + (-\bar p + \bar \nu, u - \bar u)_{L^2}
\\
&= 
\left \langle  \bar p  , \lambda - \bar \lambda \right \rangle +
\left \langle  \bar \eta, y - \bar y \right \rangle + ( \bar \nu, u - \bar u)_{L^2}
.
\end{aligned}
\end{equation*}
Combining the above two identities yields the claim.
\end{proof}
In the remainder of this paper, 
we frequently use the expansion \eqref{eq:Taylorexpansion} to derive estimates for the 
objective function of the problem \eqref{eq:P}. 
The next lemma collects some auxiliary
identities which turn out to be helpful in this context. 
\begin{lemma}
	\label{lem:aux_in_acht}
	In the situation of \cref{lemma:Taylorlikeexpansion},
	we have
	% Suppose that $\bar u \in U_{ad}$ satisfies the strong stationarity system \eqref{eq:strongstationarity} of \eqref{eq:P}
	% with a triple $(\bar p, \bar \nu, \bar \eta) \in H_0^1(\Omega) \times L^2(\Omega) \times H^{-1}(\Omega)$,
	% state $\bar y := S(\bar u) \in H_0^1(\Omega)$ and multiplier $\bar \lambda := -\Delta \bar y - \bar u \in L^2(\Omega)$. 
	% Then, for 
	% every $u \in U_{ad}$ with associated state $y := S(u)$ and multiplier $\lambda := -\Delta y - u$, it holds
	\begin{subequations}
		\label{eq:aux_in_acht}
		\begin{align}
			\label{eq:aux_in_acht_1}
			\left \langle \bar \eta, \min(0, y - \bar y)\right \rangle &= 0
			,
			\\
			\label{eq:aux_in_acht_3}
			\left \langle \bar \lambda, \min(0, y - \bar y)\right \rangle &= 0
			% \\
			% \label{eq:aux_in_acht_2}
			% \scalarprod{\bar p}{\bar\lambda}_{L^2} &= 0
		.
		\end{align}
		Further, it holds 
		\begin{equation}
			\label{eq:aux_in_acht_4}
			\begin{aligned}
				\left \langle  \bar p  , \lambda - \bar \lambda \right \rangle +
				\left \langle  \bar \eta, y  - \bar y \right \rangle
				&=
				\left ( \bar p  +  \beta ( \bar y - \psi)  , \lambda \right )_{L^2} 
				+ \left \langle  \bar \eta + \beta \bar \lambda  , \max\left (0, y  - \bar y \right ) \right \rangle 
				\\
				&\qquad+ \beta \left ( y - \bar y, \lambda - \bar \lambda\right )_{L^2}
			\end{aligned}
		\end{equation}
	\end{subequations}
	for all $\beta \in \R$.
\end{lemma}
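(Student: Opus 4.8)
The plan is to verify the three identities by exploiting the structure of the strong stationarity system together with the sign/support information contained in the multiplier formula \eqref{eq:multiplierformula}. The key structural facts are: (a) $\bar p \in \TT_K(\bar y)\cap\bar\lambda^\perp$, so $\bar p \geq 0$ q.e.\ on $\{\bar y = \psi\}$ and $\langle\bar\lambda,\bar p\rangle = 0$; (b) $\bar\eta$ is non-negative on the cone $\TT_K(\bar y)\cap\bar\lambda^\perp$; and (c) the complementarity relations $\bar\lambda \geq 0$, $\bar\lambda = 0$ a.e.\ on $\{\bar y > \psi\}$, $\lambda\geq 0$, $\lambda = 0$ a.e.\ on $\{y>\psi\}$, $\bar\lambda = -\Delta\psi - \bar u$ a.e.\ on $\{\bar y = \psi\}$, and likewise for $\lambda$.

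For \eqref{eq:aux_in_acht_3}, I would argue that $\min(0, y-\bar y) = -(\bar y - y)^+$ is supported (up to null sets) on $\{y < \bar y\}$, and on this set $\bar y > \psi \geq y \geq \psi$ forces $\bar y > \psi$, hence $\bar\lambda = 0$ there; so $\bar\lambda\cdot\min(0,y-\bar y) = 0$ pointwise a.e., which gives the claim since $\bar\lambda\in L^2(\Omega)$. For \eqref{eq:aux_in_acht_1}, the point is that $\min(0,y-\bar y)$ lies in $\TT_K(\bar y)\cap\bar\lambda^\perp$: indeed $y \geq \psi$ and $\bar y \geq \psi$ imply $\bar y + t\min(0,y-\bar y) \geq \psi$ for $t\in[0,1]$ (on $\{y\geq\bar y\}$ the perturbation is zero; on $\{y<\bar y\}$ we interpolate between $\bar y$ and $y$, both $\geq\psi$), so $\min(0,y-\bar y)\in\TT_K(\bar y)$; and the orthogonality to $\bar\lambda$ is exactly \eqref{eq:aux_in_acht_3}. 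Applying \eqref{eq:strongstationarity_4} to $z = \min(0,y-\bar y)$ gives $\langle\bar\eta,\min(0,y-\bar y)\rangle \geq 0$; applying it to $z = -\min(0,y-\bar y)$ — which also lies in the cone, being a non-negative multiple of the element $y - \bar y$ restricted appropriately... actually here I must be careful, since $-\min(0,y-\bar y) = (\bar y - y)^+$ need not be in $\TT_K(\bar y)$. The cleaner route is: $\bar\eta$ vanishes on $\bar p$'s "reachable" directions; more robustly, one writes $\langle\bar\eta, y-\bar y\rangle = \langle\bar\eta,\max(0,y-\bar y)\rangle + \langle\bar\eta,\min(0,y-\bar y)\rangle$ and shows $\langle\bar\eta,\min(0,y-\bar y)\rangle\leq 0$ by a separate argument: since $\bar\eta = j'(\bar y) + \Delta\bar p$ and $\bar p\in\bar\lambda^\perp$ with $\bar p\geq 0$ on the contact set, one uses that $\min(0,y-\bar y)$ is concentrated on $\{\bar y>\psi\}$, where $\bar\lambda = 0$ and the variational inequality defining $\bar p$ via \eqref{eq:VIdirdiff}-type reasoning pins down the sign. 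I expect this to be the main obstacle; the resolution is that on $\{y<\bar y\}\subseteq\{\bar y>\psi\}$ we are in the "interior" region where the obstacle is inactive, so no one-sided constraint obstructs testing, and $\langle\bar\eta,z\rangle = 0$ for any $z$ supported there with $z\in\bar\lambda^\perp$ — combining $\langle\bar\eta,z\rangle\geq 0$ and $\langle\bar\eta,-z\rangle\geq 0$, both legitimate because $\pm z\in\TT_K(\bar y)$ when $z$ is supported on the open inactive set.

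For \eqref{eq:aux_in_acht_4}, this is an algebraic rearrangement. Start from the left side and add and subtract $\beta(\bar y - \psi, \lambda)_{L^2}$ and $\beta\langle\bar\lambda,\max(0,y-\bar y)\rangle$. Use $\langle\bar p,\lambda\rangle = (\bar p,\lambda)_{L^2}$ (both in $L^2$) and $\langle\bar p,\bar\lambda\rangle = 0$ from \eqref{eq:strongstationarity_3}; use $y - \bar y = \max(0,y-\bar y) + \min(0,y-\bar y)$ together with \eqref{eq:aux_in_acht_1} to replace $\langle\bar\eta, y-\bar y\rangle$ by $\langle\bar\eta,\max(0,y-\bar y)\rangle$; then absorb the terms $\beta(\bar y-\psi,\lambda)_{L^2}$, $\beta\langle\bar\lambda,\max(0,y-\bar y)\rangle$, and the cross term $\beta(y-\bar y,\lambda-\bar\lambda)_{L^2}$ by checking that their sum telescopes back to zero — using $(\bar y - \psi,\bar\lambda)_{L^2} = 0$ (since $\bar\lambda$ is supported on $\{\bar y=\psi\}$) and, after expanding $(y-\bar y,\lambda-\bar\lambda)_{L^2}$, the complementarity relations $(\psi - y,\lambda)_{L^2} = 0$ and $\langle\bar\lambda,\min(0,y-\bar y)\rangle = 0$ from \eqref{eq:aux_in_acht_3}. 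This last bookkeeping step is routine but must be done carefully with the signs; I would verify it by expanding both sides into their constituent $L^2$-inner products and matching term by term.
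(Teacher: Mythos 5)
Your proposal is essentially the paper's argument and reaches all three identities correctly: \eqref{eq:aux_in_acht_3} by the pointwise observation that $\bar\lambda$ vanishes a.e.\ on $\{y<\bar y\}\subseteq\{\bar y>\psi\}$, \eqref{eq:aux_in_acht_1} by testing \eqref{eq:strongstationarity_4} with $\pm\min(0,y-\bar y)$, and \eqref{eq:aux_in_acht_4} by the algebraic rearrangement using $\left\langle\bar p,\bar\lambda\right\rangle=0$, $(\psi,\lambda)_{L^2}=(y,\lambda)_{L^2}$ and the first two identities. The only wobble is the step you yourself flag as ``the main obstacle'', and it is not actually an obstacle: $-\min(0,y-\bar y)=\max(0,\bar y-y)$ is a \emph{non-negative} element of $H_0^1(\Omega)$, so $\bar y+\max(0,\bar y-y)\ge\bar y\ge\psi$ and hence $\max(0,\bar y-y)\in K-\bar y\subseteq\TT_K(\bar y)$ trivially; together with \eqref{eq:aux_in_acht_3} this puts both signs in $\TT_K(\bar y)\cap\bar\lambda^\perp$ and \eqref{eq:aux_in_acht_1} follows, so your detour is unnecessary. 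Be aware, though, that your substitute justification (``$\pm z\in\TT_K(\bar y)$ whenever $z$ is supported on the inactive set'') is the one non-rigorous step as stated: to argue via the support of $z$ one needs the capacity-theoretic characterization of $\TT_K(\bar y)$ and the fact that $\min(0,y-\bar y)$ vanishes \emph{quasi-everywhere} (not merely a.e.) on $\{\bar y=\psi\}$ — this is exactly what the paper establishes via Stampacchia's lemma, the continuity of $\bar y$, and \cite[Theorem 9.1.3]{AdamsHedberg1999}; a.e.\ vanishing alone would not suffice for that route. Either repair (the trivial sign observation or the quasi-everywhere argument) closes the gap, so the proof goes through as you outlined.
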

\begin{proof}
	From Stampacchia's lemma, 
	see \cite[Proposition~5.8.2]{Attouch2006}, and 
	the inequalities $y \geq \psi$ and $\bar y \geq \psi$, we obtain that $\min(0, y - \bar y)$
	is an element of $H_0^1(\Omega)$ which vanishes quasi-everywhere on the active set $\{\bar y = \psi\}$
	(defined w.r.t.\ the continuous representatives). In tandem with 
	\cite[Theorem 9.1.3]{AdamsHedberg1999}, the continuity of the function $\bar y$, 
	and the properties of $\bar \eta$, this yields in particular
	that $\pm \min(0, y - \bar y) \in \TT_K(\bar y) \cap \bar \lambda^\perp$
	and 
	$\left \langle \bar \eta, \min(0, y - \bar y)\right \rangle = 0$.
	The identities \eqref{eq:aux_in_acht_1} and \eqref{eq:aux_in_acht_3} now follow immediately. 
	From \eqref{eq:strongstationarity_3} and the formula \eqref{eq:multiplierformula}, 
	we further obtain that $\dual{\bar p}{\bar\lambda} = 0$ and $(\psi,\lambda)_{L^2} = (y,\lambda)_{L^2}$.
	Together with \eqref{eq:aux_in_acht_1} and \eqref{eq:aux_in_acht_3},  the last two identities imply
	\begin{align*}
		&\left \langle  \bar p  , \lambda - \bar \lambda \right \rangle +
		\left \langle  \bar \eta, y  - \bar y \right \rangle
		\\
		&\qquad=
		\left ( \bar p  +  \beta ( \bar y - \psi)  , \lambda \right )_{L^2} 
		+ \left \langle  \bar \eta + \beta \bar \lambda  , y  - \bar y \right \rangle 
		-\beta \left (  \bar y - \psi, \lambda \right )_{L^2}
		-\beta \left \langle   \bar \lambda  , y  - \bar y \right \rangle
		\\
		&\qquad=
		\left ( \bar p  +  \beta ( \bar y - \psi)  , \lambda \right )_{L^2} 
		+ \left \langle  \bar \eta + \beta \bar \lambda  , \max\left (0, y  - \bar y \right ) \right \rangle 
		+ \beta \left ( y - \bar y, \lambda - \bar \lambda\right )_{L^2}
	\end{align*}
	for all $\beta \in \R$.
	This establishes \eqref{eq:aux_in_acht_4} and completes the proof. 
\end{proof}

Using \eqref{eq:Taylorexpansion}, we can prove the following theorem
that essentially combines the approaches of 
\cite{KunischWachsmuth2012,AliDeckelnickHinze2018,Mignot1976}
and extends the results of these papers to the control-constrained case:

\begin{theorem}[SSC Involving Compatibility Conditions]
\label{th:SSCmajor}
Suppose that $\bar u \in U_{ad}$ satisfies the strong stationarity system \eqref{eq:strongstationarity} of \eqref{eq:P}
with a triple $(\bar p, \bar \nu, \bar \eta) \in H_0^1(\Omega) \times L^2(\Omega) \times H^{-1}(\Omega)$,
state $\bar y := S(\bar u) \in H_0^1(\Omega)$ and multiplier $\bar \lambda := -\Delta \bar y - \bar u \in L^2(\Omega)$. 
Then, the following holds true:
\begin{enumerate}
\item \label{item:MajorSSC-i} 
If 
there exist constants $\beta \geq 0$ and $\gamma > 0$ with
\begin{equation}
\label{eq:multiplierassumptions-i}
\begin{aligned}
\bar p +  \beta ( \bar y - \psi) &\geq 0 \text{ a.e.\ in } \{0 < \bar y - \psi < \gamma\},
\\
\bar \eta + \beta \bar \lambda &\geq 0 \text{ in the sense of }H^{-1}(\Omega),
\end{aligned}
\end{equation}
and if 
\begin{equation}
\label{eq:ssc2534}
j''(\bar y)S'(\bar u; h)^2 + \alpha \|h\|_{L^2}^2 >  0
\end{equation}
holds for all $h \in \TT_{U_{ad}}(\bar u) \setminus \{0\}$ with
\begin{equation}
\label{eq:hproperties32}
h \in \bar \nu^\perp,\qquad 
-\Delta S'(\bar u; h) - h \in \bar p^\perp,\qquad S'(\bar u; h) \in \bar \eta^\perp,
\end{equation}
then $\bar u$ is locally optimal for \eqref{eq:P} and there exist constants $c, \varepsilon > 0$ with
\begin{equation}
\label{eq:SSCmajor1}
J(S(u), u) \geq J(S(\bar u), \bar u) + \frac{c}{2} \|u - \bar u\|_{L^2}^2\qquad \forall u \in U_{ad} \cap B_\varepsilon^{L^2}(\bar u).
\end{equation}

\item \label{item:MajorSSC-ii} 
If there exist constants $\beta \geq 0$ and $\mu \in \R$ such that 
\begin{equation}
\label{eq:stronglyconvex}
\begin{aligned}
\bar p +  \beta ( \bar y - \psi) &\geq 0 \text{ a.e.\ in } \Omega,
\\
\bar \eta + \beta \bar \lambda &\geq 0 \text{ in the sense of }H^{-1}(\Omega),
\\
j''(y)z^2 &\geq \mu \|z\|_{L^2}^2 \quad \forall (y, z) \in K \times (K - \bar y),
\end{aligned}
\end{equation}
and
\begin{equation}
\label{eq:constantestimate}
\mu + 2 \beta \omega - \frac{\beta^2}{\alpha} \geq 0
\end{equation}
holds,
where  $\omega > 0$ denotes the Poincaré constant of $\Omega$, i.e.,
\begin{equation}
\label{eq:PoincareDef}
\omega := \inf_{v \in H_0^1(\Omega) \setminus \{0\}} \frac{\int_\Omega |\nabla v|^2 \mathrm{d}x }{\int_\Omega v^2 \mathrm{d}x},
\end{equation}
 then $\bar u$ is globally optimal for \eqref{eq:P}.
If, moreover, the inequality \eqref{eq:constantestimate} is strict, then $\bar u$ is the unique global optimum of \eqref{eq:P},
and there exists a constant $c > 0$ with
\begin{equation}
\label{eq:globalgrowth22}
\begin{aligned}
&J(S(u), u) \geq J(S(\bar u), \bar u) 
+ \frac{c}{2}  \|u - \bar u\|_{L^2}^2  \qquad \forall u \in U_{ad}.
\end{aligned}
\end{equation}
\end{enumerate}
\end{theorem}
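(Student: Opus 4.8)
The plan is to derive both statements from the Taylor-like expansion \eqref{eq:Taylorexpansion} together with the identities of \cref{lem:aux_in_acht}. Fixing $\beta \ge 0$ as in the respective hypothesis and writing, for $u \in U_{ad}$, $y := S(u)$ and $\lambda := -\Delta y - u$, the combination of \eqref{eq:Taylorexpansion} and \eqref{eq:aux_in_acht_4} gives
\begin{align*}
 J(y,u) - J(\bar y,\bar u)
 &= \bigl(\bar p + \beta(\bar y - \psi),\lambda\bigr)_{L^2}
 + \langle \bar \eta + \beta\bar\lambda, \max(0, y - \bar y)\rangle
 + \beta(y - \bar y, \lambda - \bar\lambda)_{L^2}
 \\
 &\quad + (\bar\nu, u - \bar u)_{L^2} + R + \tfrac{\alpha}{2}\|u - \bar u\|_{L^2}^2,
\end{align*}
with $R := \int_0^1 (1-s)\, j''\bigl((1-s)\bar y + sy\bigr)(y - \bar y)^2\,\mathrm{d}s$. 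Throughout I use that $u - \bar u \in \R^+(U_{ad} - \bar u) \subseteq \TT_{U_{ad}}(\bar u)$, so $(\bar\nu, u - \bar u)_{L^2} \ge 0$ by \eqref{eq:strongstationarity_5}; that $\lambda \ge 0$ and $\max(0, y - \bar y) \ge 0$ in $H_0^1(\Omega)$; that $\bar p \ge 0$ q.e.\ on $\{\bar y = \psi\}$ because of $\bar p \in \TT_K(\bar y)$; and that $\lambda - \bar\lambda = -\Delta(y - \bar y) - (u - \bar u)$, so that $\beta(y - \bar y, \lambda - \bar\lambda)_{L^2} = \beta\|\nabla(y - \bar y)\|_{L^2}^2 - \beta(y - \bar y, u - \bar u)_{L^2}$.

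For \ref{item:MajorSSC-ii} I would bound the right-hand side directly: the first two terms are nonnegative by \eqref{eq:stronglyconvex}, the integral term is $\ge \tfrac{\mu}{2}\|y - \bar y\|_{L^2}^2$ (since $(1-s)\bar y + sy \in K$ and $y - \bar y \in K - \bar y$ by convexity of $K$), and $\beta\|\nabla(y - \bar y)\|_{L^2}^2 \ge \beta\omega\|y - \bar y\|_{L^2}^2$ by \eqref{eq:PoincareDef}. Putting $a := \|y - \bar y\|_{L^2}$, $b := \|u - \bar u\|_{L^2}$ and using $-\beta(y - \bar y, u - \bar u)_{L^2} \ge -\beta a b$, this yields $J(y,u) - J(\bar y,\bar u) \ge (\beta\omega + \tfrac{\mu}{2})a^2 - \beta a b + \tfrac{\alpha}{2}b^2$. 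The quadratic form on the right is positive semidefinite iff its determinant $\tfrac{\alpha}{4}(\mu + 2\beta\omega - \tfrac{\beta^2}{\alpha})$ is nonnegative, i.e.\ iff \eqref{eq:constantestimate} holds, which gives $J(y,u) \ge J(\bar y,\bar u)$ for all $u \in U_{ad}$ and hence global optimality; if \eqref{eq:constantestimate} is strict the form is positive definite, so it dominates $c_0 b^2$ with $c_0 > 0$ its smallest eigenvalue, which is \eqref{eq:globalgrowth22} with $c = 2 c_0$, and uniqueness follows at once because any other global minimizer $\tilde u$ would then satisfy $0 = J(S(\tilde u),\tilde u) - J(\bar y,\bar u) \ge c_0\|\tilde u - \bar u\|_{L^2}^2$.

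For \ref{item:MajorSSC-i} I would argue by contradiction. If no admissible pair $c, \varepsilon$ existed, I pick $u_n \in U_{ad}$, $u_n \ne \bar u$, with $t_n := \|u_n - \bar u\|_{L^2} \to 0^+$ and $J(S(u_n),u_n) - J(\bar y,\bar u) < \tfrac{1}{2n}t_n^2$, set $h_n := (u_n - \bar u)/t_n$, $y_n := S(u_n)$, $\lambda_n := -\Delta y_n - u_n$, $\delta_n := (y_n - \bar y)/t_n$, and pass to a subsequence with $h_n \weakly h$ in $L^2(\Omega)$. Since $L^2(\Omega) \hookrightarrow H^{-1}(\Omega)$ compactly, $h_n \to h$ in $H^{-1}(\Omega)$, and the Lipschitz continuity and directional differentiability of $S\colon H^{-1}(\Omega)\to H_0^1(\Omega)$ combine to give $\delta_n \to \delta_h := S'(\bar u; h)$ in $H_0^1(\Omega)$; moreover $y_n \to \bar y$ in $H_0^1(\Omega)$ and in $L^\infty(\Omega)$ by \cref{th:solutionmapproperties}, and $h \in \TT_{U_{ad}}(\bar u)$ since this cone is weakly closed. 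Dividing \eqref{eq:Taylorexpansion} by $t_n$ and passing to the limit gives $\langle j'(\bar y), \delta_h\rangle + \alpha(\bar u, h)_{L^2} \le 0$; the reverse inequality holds by \eqref{eqrandomeq82636}, and since the computation in \eqref{eqrandomeq82636} writes this quantity as the sum of the nonnegative terms $\langle -\Delta\delta_h - h, \bar p\rangle$, $\langle\bar\eta, \delta_h\rangle$ and $(\bar\nu, h)_{L^2}$, all three vanish, i.e.\ $h$ lies in the critical cone \eqref{eq:hproperties32}. Dividing instead the above $\beta$-expansion by $t_n^2$: for $n$ large $\|y_n - \bar y\|_{L^\infty} < \gamma$ forces $0 \le \bar y - \psi < \gamma$ on the support $\{y_n = \psi\}$ of $\lambda_n$, so \eqref{eq:multiplierassumptions-i} and $\bar p \ge 0$ q.e.\ on $\{\bar y = \psi\}$ give $(\bar p + \beta(\bar y - \psi),\lambda_n)_{L^2} \ge 0$, while the other leading terms are nonnegative as before; in the limit $\beta\|\nabla\delta_n\|_{L^2}^2 - \beta(\delta_n, h_n)_{L^2} \to \beta\|\nabla\delta_h\|_{L^2}^2 - \beta(\delta_h, h)_{L^2} = 0$ by the variational inequality \eqref{eq:VIdirdiff} for $\delta_h$, and $R/t_n^2 \to \tfrac12 j''(\bar y)\delta_h^2$, so dropping the nonnegative leading terms leaves $0 \ge \tfrac12 j''(\bar y)\delta_h^2 + \tfrac{\alpha}{2}$, i.e.\ $j''(\bar y)\delta_h^2 \le -\alpha$. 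As $\|h\|_{L^2} \le \liminf\|h_n\|_{L^2} = 1$, this gives $j''(\bar y)\delta_h^2 + \alpha\|h\|_{L^2}^2 \le 0$, which contradicts \eqref{eq:ssc2534} if $h \ne 0$ and is absurd if $h = 0$ (then $\delta_h = 0$, so $0 \le -\alpha$). Hence $\bar u$ is locally optimal and \eqref{eq:SSCmajor1} holds.

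The step I expect to require the most care is the localization in \ref{item:MajorSSC-i}: using the $L^\infty$-convergence of the states from \cref{th:solutionmapproperties} to transfer the one-sided bound on $\bar p + \beta(\bar y - \psi)$, which is only assumed on the thin strip $\{0 < \bar y - \psi < \gamma\}$, onto the a priori unknown coincidence sets $\{y_n = \psi\}$ carrying $\lambda_n$, while keeping track of the capacity-fine statement $\bar p \ge 0$ q.e.\ on $\{\bar y = \psi\}$. A further delicate point is the strong $H_0^1$-convergence $\delta_n \to S'(\bar u; h)$, which rests on upgrading the weak $L^2$-convergence of $h_n$ to strong $H^{-1}$-convergence and then invoking the Lipschitz continuity and directional differentiability of $S$. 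The remaining ingredients — membership of $h$ in the critical cone, the identity $\|\nabla\delta_h\|_{L^2}^2 = (\delta_h, h)_{L^2}$ obtained by testing \eqref{eq:VIdirdiff} with $z = 0$ and $z = 2\delta_h$, and the $2 \times 2$ semidefiniteness computation in \ref{item:MajorSSC-ii} — are routine.
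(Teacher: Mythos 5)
Your proposal is correct and follows essentially the same route as the paper's proof: the expansion from \cref{lemma:Taylorlikeexpansion} combined with \cref{lem:aux_in_acht}, a contradiction argument with a weakly convergent normalized sequence and the $L^\infty$-Lipschitz localization of $\lambda_n$ onto $\{0 \le \bar y - \psi < \gamma\}$ (plus $\bar p \ge 0$ on the active set) for part \ref{item:MajorSSC-i}, and the Poincaré-based quadratic estimate in $\|y-\bar y\|_{L^2}$, $\|u-\bar u\|_{L^2}$ for part \ref{item:MajorSSC-ii}. The only harmless deviations are that you obtain the criticality \eqref{eq:hproperties32} of the weak limit $h$ from the first-order expansion together with the decomposition \eqref{eqrandomeq82636}, whereas the paper extracts it from the vanishing of the $1/t_n$-scaled bracket terms, and that you replace the paper's $\varepsilon$-Young step by an equivalent eigenvalue/determinant argument for the $2\times 2$ quadratic form.
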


\begin{proof}
Ad \ref{item:MajorSSC-i}:
We follow the lines of \cite{KunischWachsmuth2012} and argue by contradiction
	(cf.\ also \cite{ChristofWachsmuth2018}).
	Suppose that \eqref{eq:multiplierassumptions-i} is satisfied, that 
	\eqref{eq:ssc2534} holds for all $h \in \TT_{U_{ad}}(\bar u) \setminus \{0\}$ with \eqref{eq:hproperties32},
	and that there are no $c>0$, $\varepsilon > 0$ with \eqref{eq:SSCmajor1}.
	Then, we can find sequences $\{u_n\} \subset U_{ad}$ and $\{c_n\} \subset \mathbb{R}^+$
	with
	\begin{equation}
	\label{eq:random2635}
		c_n \searrow 0
		,\quad
		\norm{u_n - \bar u }_{L^2} \to 0
		\quad\text{and}\quad
		J(S(u_n), u_n) - J(S(\bar u), \bar u) < \frac{c_n}{2} \|u_n - \bar u\|_{L^2}^2.
	\end{equation} 
	Define $y_n := S(u_n)$, $\lambda_n := -\Delta y_n - u_n$, $t_n := \norm{u_n - \bar u}_{L^2}$ and $h_n := (u_n - \bar u) / t_n$.
	Then, it holds $t_n \searrow 0$,  $h_n \in \R^+(U_{ad} - \bar u)$ and $\norm{h_n}_{L^2} = 1$, and we may assume w.l.o.g.\ 
	that the sequence $h_n$ converges weakly in $L^2(\Omega)$ to some $h \in \TT_{U_{ad}}(\bar u)$ for $n \to \infty$.
	Note that, due to the properties of $S$ in \cref{th:solutionmapproperties}
and
the compactness of the embedding $L^2(\Omega) \hookrightarrow H^{-1}(\Omega)$, 
	the convergence $h_n \weakly h$ in $L^2(\Omega)$ implies in particular that 
$(y_n - \bar y) / t_n$ converges strongly in $H_0^1(\Omega)$ to $S'(\bar u; h)$, cf.\ the 
results in \cite[Section 2.2.1]{BonnansShapiro2000}. 
	Using \eqref{eq:Taylorexpansion}, \eqref{eq:random2635} and the continuity of $j''$, we may now deduce that 
	\begin{equation}
	\label{eq:randomestimate434}
	\begin{aligned}
		0
		&\geq
		\frac{J( y_n, u_n) - J(\bar y, \bar u) - \frac{c_n}{2} \|t_n h_n \|_{L^2}^2}{t_n^2}
		\\
		& = \frac{1}{t_n^2} \Big (
	\left \langle  \bar p  , \lambda_n - \bar \lambda \right \rangle +
	\left \langle  \bar \eta, y_n  - \bar y \right \rangle + ( \bar \nu,  t_n h_n )_{L^2} \Big )
	+ \frac{1}{2} j''(\bar y) S'(\bar u; h)^2  + \frac{\alpha}{2} \| h_n\|_{L^2}^2 + \oo(1)
	,
	\end{aligned}
	\end{equation}
	where the Landau symbol refers to the limit $n \to \infty$. 
	Due to \eqref{eq:aux_in_acht_4} and $\norm{h_n}_{L^2} = 1$,
	the above yields
	\begin{equation}
		\label{eq:longlong2635}
		\begin{aligned}
			0 &\ge
			\frac{1}{t_n} \left (
				\left ( \bar p  +  \beta ( \bar y - \psi)  , \frac{\lambda_n}{t_n} \right )_{L^2} 
				+ \left \langle  \bar \eta + \beta \bar \lambda  , \max\left (0, \frac{y_n  - \bar y}{t_n} \right ) \right \rangle 
			+ ( \bar \nu,  h_n )_{L^2} \right )
			\\
			&\qquad 
			+ \beta \left ( \frac{y_n - \bar y}{t_n}, \frac{\lambda_n - \bar \lambda}{t_n}\right )_{L^2}
			+ \frac{1}{2} j''(\bar y) S'(\bar u; h)^2  + \frac{\alpha}{2} 
			+ \oo(1).
		\end{aligned}
	\end{equation}
	Note that the convergence $(y_n - \bar y)/ t_n \to S'(\bar u; h)$ in $H_0^1(\Omega)$ implies that the multipliers $\lambda_n$
	and $\bar \lambda$ satisfy 
	$(\lambda_n - \bar \lambda)/t_n \to -\Delta S'(\bar u; h) - h$ in $H^{-1}(\Omega)$ and that, as a consequence, we have
	\begin{equation*}
		\left ( \frac{y_n - \bar y}{t_n}, \frac{\lambda_n - \bar \lambda}{t_n}\right )_{L^2} \to 
		\left \langle -\Delta S'(\bar u; h) - h, S'(\bar u; h)\right \rangle = 0,
	\end{equation*} 
	where the last equality follows from the variational inequality \eqref{eq:VIdirdiff} 
	by choosing the test functions  $z = 0$ and $z = 2 S'(\bar u; h)$.
	If we use the above in \eqref{eq:longlong2635}, then we obtain
	\begin{equation}
	\label{eq:randomeq27363783}
	\begin{aligned}
		0
		&\geq
		\limsup_{n \to \infty}  \Bigg ( \frac{1}{t_n} \Bigg [
	\left ( \bar p  +  \beta ( \bar y - \psi)  , \frac{\lambda_n}{t_n} \right )_{L^2} 
		+ \left \langle  \bar \eta + \beta \bar \lambda  , \max\left (0, \frac{y_n  - \bar y}{t_n} \right ) \right \rangle 
	+ ( \bar \nu,  h_n )_{L^2} \Bigg ] \Bigg )
	\\
	&\qquad\qquad\qquad\qquad\qquad
	+ \frac{1}{2} j''(\bar y)  S'(\bar u; h)^2  + \frac{\alpha}{2}.
	\end{aligned}
	\end{equation}
	Since the global Lipschitz continuity of the map $S : L^2(\Omega) \to L^\infty(\Omega)$
	and the properties of $h_n$ imply
	\begin{equation*}
		|y_n - \psi| \geq | \bar y - \psi| - \|y_n - \bar y\|_{L^\infty}
		\geq | \bar y - \psi| - C t_n
	\end{equation*}
	with an absolute constant $C>0$, we may further use \eqref{eq:multiplierformula}
	to deduce that $\lambda_n$ vanishes a.e.\ in the set $\{\bar y - \psi >C t_n\}$.
	If we combine this observation with our assumptions in \eqref{eq:multiplierassumptions-i}, then \eqref{eq:randomeq27363783} yields
	 (due  to the non-negativity of
	the terms in the square brackets for large $n$, the factor $1/t_n$, the continuity 
	of the map $H_0^1(\Omega) \ni z \mapsto \max(0, z) \in H_0^1(\Omega)$,
	the weak lower semicontinuity of 
	continuous and convex functions,
	and the properties of $S$, $\lambda_n$, etc.) that
	\begin{equation*}
	\begin{aligned}
	0 &= \lim_{n \to \infty} \left ( \bar p  +  \beta ( \bar y - \psi)  , \frac{\lambda_n}{t_n}\right )_{L^2} 
	\\
	 &= \lim_{n \to \infty} \left ( \bar p  +  \beta \max(0,  \bar y - y_n)  , \frac{\lambda_n - \bar \lambda}{t_n}\right )_{L^2}  
	= \left \langle - \Delta S'(\bar u; h) - h,  \bar p \right \rangle,
	\\
	0 &= \lim_{n \to \infty} \left \langle  \bar \eta + \beta \bar \lambda  , 
	\max \left (0, \frac{y_n  - \bar y}{t_n} \right ) \right \rangle
	= \lim_{n \to \infty} \left \langle  \bar \eta + \beta \bar \lambda  , 
	 \frac{y_n  - \bar y}{t_n} \right \rangle
	  = \left \langle  \bar \eta , 
	 S'(\bar u; h) \right \rangle,
	\\
	0 &=  \lim_{n \to \infty} ( \bar \nu,   h_n )_{L^2} = ( \bar \nu,   h)_{L^2},
	\end{aligned}
	\end{equation*}
	and
	\begin{equation*}
		0
		\geq
 		\frac{1}{2} j''(\bar y)  S'(\bar u; h)^2  + \frac{\alpha}{2}
		\geq
 		\frac{1}{2} j''(\bar y)  S'(\bar u; h)^2  + \frac{\alpha}{2} \|h\|_{L^2}^2.
	\end{equation*}
	Due to \eqref{eq:ssc2534} for all $h \in \TT_{U_{ad}}(\bar u) \setminus \{0\}$ with \eqref{eq:hproperties32},
	the above is impossible. Thus, we indeed arrive at a contradiction and the proof of the first 
	assertion is complete. 

Ad \ref{item:MajorSSC-ii}:
The proof is completely analogous to \cite{AliDeckelnickHinze2018}: 
From \eqref{eq:Taylorexpansion} and \eqref{eq:stronglyconvex}, it follows straightforwardly that,
for all $u \in U_{ad}$ with associated state $y = S(u)$ and multiplier $\lambda$, we have
\begin{equation*}
J(y, u) - J(\bar y, \bar u) 
\ge \left \langle  \bar p  , \lambda - \bar \lambda \right \rangle +
\left \langle  \bar \eta, y - \bar y \right \rangle + ( \bar \nu, u - \bar u)_{L^2}
+ \frac{\mu }{2}\|y - \bar y\|_{L^2}^2 + \frac{\alpha}{2} \|u - \bar u\|_{L^2}^2.
\end{equation*}
Using \eqref{eq:aux_in_acht_4}, \eqref{eq:strongstationarity_5}, 
the sign conditions in \eqref{eq:stronglyconvex},
% exactly the same arguments as in \ref{item:MajorSSC-i},
Young's inequality and the definitions of $\lambda$ and $\bar \lambda$,
we may now deduce that, for every arbitrary but fixed $\varepsilon \in [0,1]$, we have 
\begin{equation}
\label{eq:globaloptimalityestimate}
\begin{aligned}
&J(y, u) - J(\bar y, \bar u) 
% \\
% &\quad
% \geq \left \langle  \bar p  , \lambda \right \rangle +
% \left \langle  \bar \eta, \max(0, y - \bar y) \right \rangle + ( \bar \nu, u - \bar u)_{L^2}
% + \frac{\mu }{2}\|y - \bar y\|_{L^2}^2 + \frac{\alpha}{2} \|u - \bar u\|_{L^2}^2
\\
&\quad
\geq \beta \left \langle \lambda - \bar \lambda,  y - \bar y \right \rangle 
+ \frac{\mu }{2}\|y - \bar y\|_{L^2}^2 + \frac{\alpha}{2} \|u - \bar u\|_{L^2}^2
\\
&\quad
= \beta \int_\Omega \left  | \nabla y - \nabla \bar y \right |^2 \dx
+
\beta \left \langle \bar u - u,  y - \bar y \right \rangle 
+ \frac{\mu }{2}\|y - \bar y\|_{L^2}^2 + \frac{\alpha}{2} \|u - \bar u\|_{L^2}^2
\\
&\quad
\geq \int_\Omega  \left ( \beta \omega + \frac{\mu }{2}\right) (y - \bar y)^2 +
\beta (y - \bar y)(\bar u - u)
 + \frac{\alpha}{2} (u - \bar u)^2
\mathrm{d}x
\\
&\quad
\geq  \left [  \frac{1}{2} \left ( 2\beta \omega +  \mu  - \frac{\beta^2}{ \alpha} \right ) 
- \varepsilon\frac{\beta^2}{2 \alpha} \right] \| y - \bar y\|_{L^2}^2
 + \frac{\varepsilon \alpha}{2(1 + \varepsilon)} \| u - \bar u\|_{L^2}^2.
\end{aligned}
\end{equation}
Suppose now that the condition in \eqref{eq:constantestimate} is satisfied. Then, by choosing $\varepsilon = 0$
in \eqref{eq:globaloptimalityestimate}, we obtain immediately that $\bar u$ is a global optimum of  \eqref{eq:P}.
This proves the first assertion in \ref{item:MajorSSC-ii}. If, additionally, \eqref{eq:constantestimate}
holds with strict inequality, then we can choose a sufficiently small $\varepsilon > 0$ in \eqref{eq:globaloptimalityestimate} to arrive at \eqref{eq:globalgrowth22}.
The second assertion in \ref{item:MajorSSC-ii} now follows immediately. This completes the proof of the theorem. 
\end{proof}

Some remarks are in order regarding the last result:

\begin{remark}~
\begin{itemize}
\item Note that, for all $\bar u \in U_{ad}$, which are strongly stationary for \eqref{eq:P}, and all
 ${h \in \TT_{U_{ad}}(\bar u)}$, we have (cf.\ \eqref{eqrandomeq82636})
\begin{equation*}
\begin{aligned}
\left \langle j'(\bar y), S'(\bar u; h) \right \rangle + \alpha \left ( \bar u, h \right )_{L^2} 
&= \left \langle -\Delta \bar p + \bar \eta, S'(\bar u; h) \right \rangle + \left ( -\bar p + \bar \nu, h \right )_{L^2} 
\\
&=  \left \langle -\Delta  S'(\bar u; h) - h, \bar p\right \rangle 
+ \left \langle \bar \eta, S'(\bar u; h) \right \rangle
+ \left (\bar \nu, h \right )_{L^2}.
\end{aligned}
\end{equation*}
The above implies, in combination with the conditions in \eqref{eq:strongstationarity},
that a direction $h \in \TT_{U_{ad}}(\bar u)$ satisfies \eqref{eq:hproperties32}
if and only if $\left \langle j'(\bar y), S'(\bar u; h) \right \rangle + \alpha \left ( \bar u, h \right )_{L^2} = 0$.
This shows that, as usual in the analysis of second-order optimality conditions, 
\eqref{eq:ssc2534} is a positivity condition on the critical cone (without zero), i.e., on the set of all directions 
which satisfy the Bouligand stationarity condition \eqref{eq:Bouligandstatcond} with equality. 

\item It is easy to check that, in the situation of
\cref{th:SSCmajor}\ref{item:MajorSSC-ii},
\eqref{eq:strongstationarity_3}, \eqref{eq:strongstationarity_4}
and
the first two lines in \eqref{eq:stronglyconvex} 
can be recast as
\begin{equation*}
\bar p \in \R^+ \left ( K - \bar y\right ) \cap \bar\lambda\anni,\qquad \bar \eta \in \R^+\left (-\TT_K(\bar y)^\circ - \bar \lambda \right ),
\end{equation*}
where $\TT_K(\bar y)^\circ$ denotes the polar cone of $\TT_K(\bar y)$.
From \eqref{eq:strongstationarity} alone, we only obtain that $\bar p  \in \TT_K(\bar y) \cap\bar\lambda\anni \supset \R^+ \left ( K - \bar y\right ) \cap\bar\lambda\anni $
and 
$\bar \eta \in -\left ( \TT_K(\bar y) \cap \bar \lambda^\perp\right )^\circ \supset\R^+\left (-\TT_K(\bar y)^\circ - \bar \lambda \right )$. 
The assumptions on $\bar p$ and $\bar \eta$ in \cref{th:SSCmajor}\ref{item:MajorSSC-ii} thus
express that $\bar p$ and $\bar \eta$ satisfy stricter inclusions than those implied 
by the strong stationarity system \eqref{eq:strongstationarity}.

\item Observe that, in \cref{th:SSCmajor}\ref{item:MajorSSC-ii}, the functional $j$ is allowed to possess 
negative curvature if $\beta$, $\omega$ and $\alpha$ are suitable. 

\item Note that the conditions in \eqref{eq:multiplierassumptions-i} are indeed weaker than 
the non-negativity assumptions used in \cite[Section 2.2]{KunischWachsmuth2012} 
(due to the signs of $\bar y - \psi$ and $\bar \lambda$)
and that 
\cref{th:SSCmajor} indeed generalizes \cite[Theorem 3.2]{AliDeckelnickHinze2018}
where only global optima and problems without control constraints are considered. 
\end{itemize}
\end{remark}

In the remainder of this paper, our aim will be to derive
SSC for the problem \eqref{eq:P} that 
involve more tangible/milder assumptions on the relationship between 
$\bar p$, $\bar \eta$, $\bar \lambda$ and $\bar y$ than those in \eqref{eq:multiplierassumptions-i} and 
\eqref{eq:stronglyconvex}.
To achieve this goal, we will study in more detail the structure 
of the stationary points $\bar u$ of \eqref{eq:P}
and the form of the associated multipliers $\bar \lambda$.

\section{Structure of Optimal Controls and Identification with a State-Constrained Optimal Control Problem}
\label{sec:4}

The main idea of the analysis in the next three sections 
is to exploit that the Ti\-kh\-on\-ov regularization term $\frac{\alpha}{2}\|u\|_{L^2}^2$
imposes a special structure on the minimizers and Bouligand stationary points $\bar u \in U_{ad}$
of the problem \eqref{eq:P}. As we will see, this special structure 
makes it possible to recast \eqref{eq:P} as a
state-constrained optimal control problem for the Poisson equation 
(with a modified objective function) and to derive sufficient conditions 
for local and global optimality in a very natural way.
Since the subsequent analysis is completely unaffected 
by the presence of additional state constraints,
in this section and the next, we also allow that the optimal 
control problem under consideration contains a condition of the form $y \in Y_{ad}$.
To be more precise, we assume that a problem of the type
\begin{equation}
	\label{eq:Q}
	\tag{\textup{Q}}
	\begin{aligned}
		\text{Minimize} \quad & J(y, u) := j(y) + \frac{\alpha}{2}\|u\|_{L^2}^2 \\
		\text{w.r.t.}\quad &(y,u) \in H_0^1(\Omega) \times L^2(\Omega) \\
		\text{s.t.}\quad & y \in K, \ \ \left \langle -\Delta y, v - y \right \rangle \geq \left \langle u, v - y \right \rangle \ \ \forall v \in K \\
		& u \in U_{ad} := \left \{ w \in L^2(\Omega) \mid u_a \leq w \leq u_b \text{ a.e.\ in } \Omega \right \} \\
		\text{and}\quad& y \in Y_{ad}
	\end{aligned}
\end{equation}
is given and that the following is satisfied:

\begin{assumption}[Standing Assumptions for the Study of Problem {\eqref{eq:Q}}]~%
\label{assumption:standingQ}%
\begin{itemize}
\item $d$, $\Omega$, $j$, $\alpha$, $K$, $\psi$, $u_a$ and $u_b$ are as in \cref{assumption:standing},
\item $Y_{ad}$ is a weakly closed subset of $H^2(\Omega)$ and there exists a control $u \in U_{ad}$ with $S(u) \in Y_{ad}$. 
\end{itemize}
\end{assumption}

Let us briefly check that the additional constraint $y \in Y_{ad}$ in \eqref{eq:Q} 
has no effect on the well-posedness of the problem:

\begin{proposition}[Solvability and Bouligand Stationarity for \eqref{eq:Q}]
\label{prop:Qwellposedness}
The optimal control problem \eqref{eq:Q}  admits at least one global 
solution $\bar u \in L^2(\Omega)$. Moreover, every local solution $\bar u \in L^2(\Omega)$
of \eqref{eq:Q} with state $\bar y := S(\bar u)$ satisfies the Bouligand stationarity condition 
\begin{equation}
\label{eq:BouligandstatcondQ}
\left \langle j'(\bar y), S'(\bar u; h) \right \rangle + \alpha \left ( \bar u, h \right )_{L^2} \geq 0 \qquad 
\forall h \in 
\TT_{\UU_{ad}}^{w{\text -}out}(\bar u).
\end{equation}
Here, $\TT_{\UU_{ad}}^{w{\text -}out}(\bar u)$ denotes the weak outer tangent cone 
of the (not necessarily convex) effective admissible set 
$\UU_{ad} := \{u \in U_{ad} \mid S(u) \in Y_{ad}\}$ of \eqref{eq:Q} at $\bar u$, i.e., 
\begin{equation*}
\begin{aligned}
 \TT_{\UU_{ad}}^{w{\text -}out}(\bar u)
 :=
\set*{ 
h \in L^2(\Omega)
\given
\exists t_n \searrow 0, h_n \overset{L^2}{\weakly} h
\text{ such that }  \bar u + t_n h_n \in \UU_{ad} \ \forall n \in \mathbb{N}
}.
\end{aligned}
\end{equation*}
\end{proposition}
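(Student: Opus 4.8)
The statement has two parts: existence of a global solution of \eqref{eq:Q}, and the Bouligand-type stationarity condition \eqref{eq:BouligandstatcondQ} at any local solution. For the existence part, the plan is to run the direct method exactly as in the proof of \cref{cor:optexistence}, but with the additional bookkeeping for the constraint $y \in Y_{ad}$. Concretely, I would take a minimizing sequence $\{u_n\} \subset \UU_{ad}$ for \eqref{eq:Q}; since $j$ is bounded below and the Tikhonov term dominates, the sequence $\{u_n\}$ is bounded in $L^2(\Omega)$ (here we use $0 \in U_{ad}$, so a minimizing sequence can be chosen with bounded cost, hence bounded norm). Extract a subsequence with $u_n \weakly \bar u$ in $L^2(\Omega)$; then $\bar u \in U_{ad}$ since $U_{ad}$ is closed and convex, hence weakly closed. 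By compactness of $L^2(\Omega) \hookrightarrow H^{-1}(\Omega)$ and Lipschitz continuity of $S \colon H^{-1}(\Omega) \to H_0^1(\Omega)$ from \cref{th:solutionmapproperties}, we get $y_n := S(u_n) \to \bar y := S(\bar u)$ strongly in $H_0^1(\Omega)$. To pass the state constraint to the limit I invoke the $H^2$-bound \eqref{eq:H2reg}: the $y_n$ are bounded in $H^2(\Omega)$, so (along a further subsequence) $y_n \weakly \bar y$ in $H^2(\Omega)$, and since $Y_{ad}$ is weakly closed in $H^2(\Omega)$ we conclude $\bar y \in Y_{ad}$, i.e.\ $\bar u \in \UU_{ad}$. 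Finally, $j$ is continuous on $H_0^1(\Omega)$ and the norm term is weakly lower semicontinuous, so $J(\bar y, \bar u) \le \liminf J(y_n, u_n)$, giving optimality of $\bar u$.

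For the stationarity part, let $\bar u$ be a local solution with $\bar y = S(\bar u)$, and let $h \in \TT_{\UU_{ad}}^{w\text{-}out}(\bar u)$ be arbitrary; pick $t_n \searrow 0$ and $h_n \weakly h$ in $L^2(\Omega)$ with $\bar u + t_n h_n \in \UU_{ad}$. For $n$ large, $\bar u + t_n h_n$ lies in the neighborhood of $\bar u$ where $\bar u$ is optimal, so $J(S(\bar u + t_n h_n), \bar u + t_n h_n) \ge J(\bar y, \bar u)$. Now I expand the left-hand side to first order in $t_n$. As in the proof of \cref{th:SSCmajor}\ref{item:MajorSSC-i}, the weak convergence $h_n \weakly h$ in $L^2(\Omega)$ together with the compact embedding $L^2(\Omega) \hookrightarrow H^{-1}(\Omega)$ and the directional differentiability plus Lipschitz continuity of $S$ forces $(S(\bar u + t_n h_n) - \bar y)/t_n \to S'(\bar u; h)$ strongly in $H_0^1(\Omega)$ (this is the standard Hadamard-differentiability argument, cf.\ \cite[Section 2.2.1]{BonnansShapiro2000}). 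Using the twice continuous differentiability of $j$ and expanding $\tfrac{\alpha}{2}\|\bar u + t_n h_n\|_{L^2}^2$, one obtains
\begin{equation*}
0 \le \frac{J(S(\bar u + t_n h_n), \bar u + t_n h_n) - J(\bar y, \bar u)}{t_n} = \left\langle j'(\bar y), \frac{S(\bar u + t_n h_n) - \bar y}{t_n} \right\rangle + \alpha(\bar u, h_n)_{L^2} + o(1).
\end{equation*}
Passing $n \to \infty$, using the strong convergence of the difference quotients and weak convergence $h_n \weakly h$, yields $\langle j'(\bar y), S'(\bar u; h)\rangle + \alpha(\bar u, h)_{L^2} \ge 0$, which is \eqref{eq:BouligandstatcondQ}.

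The main obstacle — though a mild one — is the passage to the limit in the state constraint: one has to be careful that weak $H_0^1$-limits are not enough, and the $H^2$-regularity estimate \eqref{eq:H2reg} is exactly what upgrades this so that the weak closedness of $Y_{ad}$ in $H^2(\Omega)$ can be applied. A secondary point requiring care is that $\UU_{ad}$ is generally nonconvex, so one cannot use convex tangent-cone machinery directly; this is precisely why the statement is phrased with the weak outer tangent cone $\TT_{\UU_{ad}}^{w\text{-}out}(\bar u)$, whose definition already packages the sequences $t_n, h_n$ one needs, so that the argument above goes through verbatim without any convexity of the feasible set. No chain-rule subtlety arises because we differentiate $J$ along the explicit feasible path $\bar u + t_n h_n$ rather than invoking a composite derivative formula.
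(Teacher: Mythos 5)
Your proposal is correct and fleshes out exactly the argument the paper leaves as a sketch (direct method with the $H^2$-estimate \eqref{eq:H2reg} and weak closedness of $Y_{ad}$, then differentiation along the feasible paths $\bar u + t_n h_n$ supplied by the definition of $\TT_{\UU_{ad}}^{w\text{-}out}(\bar u)$, using the Hadamard-type convergence of difference quotients of $S$). One cosmetic remark: the boundedness of the minimizing sequence should be justified by the nonemptiness of the effective admissible set guaranteed in \cref{assumption:standingQ} together with the boundedness of $j$ from below, not by $0 \in U_{ad}$ alone, since $S(0)$ need not lie in $Y_{ad}$.
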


\begin{proof}
The existence of a global solution $\bar u$ and the stationarity condition \eqref{eq:BouligandstatcondQ}
follow from the 
direct method of calculus of variations, the properties of the quantities in \eqref{eq:Q},
the estimate \eqref{eq:H2reg}, the mapping properties of the operator $S$,
the definition of $\TT_{\UU_{ad}}^{w{\text -}out}(\bar u)$ and a simple calculation. 
\end{proof}

Note that, in the special case $Y_{ad} = H^2(\Omega)$, the Bouligand stationarity condition \eqref{eq:BouligandstatcondQ}
takes precisely the form \eqref{eq:Bouligandstatcond} (due to the lemma of Mazur).
\Cref{prop:Qwellposedness} is thus consistent with the results that we have 
established in \cref{sec:2} for the problem \eqref{eq:P}.
The key observation is now the following: 

\begin{theorem}[Structure of Stationary Points and Partially Optimal Controls]
~\label{the:Qstructure}
\begin{enumerate}
\item \label{th:Qstructure:i}
Suppose that
$\bar u \in U_{ad}$ 
is a Bouligand stationary point of the problem \eqref{eq:Q}
with state $\bar y := S(\bar u) \in Y_{ad}$ 
and multiplier $\bar \lambda := -\Delta \bar y - \bar u \in L^2(\Omega)$.
Then, it necessarily holds
\begin{equation*}
\bar u =
\begin{cases}
\min(0, -\Delta \psi) & \text{a.e.\ in } \{\bar y = \psi\}
\\
-\Delta \bar y &\text{a.e.\ in } \{\bar y > \psi\}  
\end{cases}
,
\quad
\bar \lambda
=
\begin{cases}
\max(0, -\Delta \psi)& \text{a.e.\ in } \{\bar y = \psi\}
\\
0 &\text{a.e.\ in } \{\bar y > \psi\}  
\end{cases}.
\end{equation*}
In particular, in addition to the complementarity condition  $0 \leq \bar y - \psi \perp \bar \lambda \geq 0$ a.e.\ in~$\Omega$
associated with the obstacle problem, the triple $(\bar u,\bar y,\bar \lambda)$ satisfies 
\begin{equation}
\label{eq:doublecomplementarity}
0 \leq - \bar u \perp \bar \lambda \geq 0 \quad \text{a.e.\ in } \{\bar y = \psi\}.
\end{equation}
Here, $a \perp b$, $a, b \in \R$, means that at least one of the numbers $a$ and $b$ is zero. 

\item \label{th:Qstructure:ii}
Suppose that $y \in Y_{ad}$ is a state that is attainable in \eqref{eq:Q} (i.e., a state such that there exists a control
$u \in U_{ad}$ with $S(u) = y$). Then, the function 
\begin{equation}
\label{eq:uydef}
\begin{aligned}
u_y & :=
\begin{cases}
\min(0, -\Delta \psi) & \text{a.e.\ in } \{y = \psi\}
\\
- \Delta y &\text{a.e.\ in } \{y > \psi\}
\end{cases}
\end{aligned}
\end{equation}
satisfies $u_y \in U_{ad}$, $S(u_y) = y \in Y_{ad}$ and 
\begin{equation}
\label{eq:randomeq287364782}
	\norm{u}_{L^2}^2
	-
	\norm{u_y}_{L^2}^2
	\ge
	\norm{u - u_y}_{L^2}^2\quad \forall u \in U_{ad} \text{ with } S(u) = y.
\end{equation}
In particular,
\begin{equation}
\label{eq:partialoptimality}
\{u_y\} = \argmin_{u \in U_{ad},\, S(u) = y} J(y, u).
\end{equation}
\end{enumerate}
\end{theorem}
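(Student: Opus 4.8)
The plan is to establish part~(ii) first, since the structural identities in part~(i) then follow from it by exploiting the Bouligand stationarity condition \eqref{eq:BouligandstatcondQ}.

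\emph{Part~(ii).} Fix an attainable $y \in Y_{ad}$ and some $u \in U_{ad}$ with $S(u) = y$. By \eqref{eq:multiplierformula}, the multiplier $\lambda := -\Delta y - u$ is non-negative, vanishes a.e.\ on $\{y > \psi\}$, and equals $-\Delta\psi - u$ a.e.\ on $\{y = \psi\}$; moreover, Stampacchia's lemma gives $-\Delta y = -\Delta\psi$ a.e.\ on $\{y = \psi\}$ (cf.\ the proof of \cref{th:solutionmapproperties}). Hence $u = -\Delta y$ a.e.\ on $\{y > \psi\}$ and $u \le -\Delta\psi$ a.e.\ on $\{y = \psi\}$. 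I would first check that $u_y$ from \eqref{eq:uydef} lies in $U_{ad}$: on $\{y > \psi\}$ it coincides with $u$, and on $\{y = \psi\}$ one has $u_a \le \min(0,-\Delta\psi) \le 0 \le u_b$, where the lower bound uses $u_a \le 0$ on $\{-\Delta\psi \ge 0\}$ and $u_a \le u \le -\Delta\psi$ on $\{-\Delta\psi < 0\}$. Next, $S(u_y) = y$: the residual $\tilde\lambda := -\Delta y - u_y$ equals $0$ a.e.\ on $\{y > \psi\}$ and $\max(0,-\Delta\psi) \ge 0$ a.e.\ on $\{y = \psi\}$, so $\tilde\lambda \ge 0$ a.e.\ in $\Omega$ and $\tilde\lambda$ vanishes on $\{y > \psi\}$; together with $y \in K$ this yields $\langle -\Delta y - u_y, v - y\rangle = \int_\Omega \tilde\lambda\,(v - \psi)\,\dx \ge 0$ for all $v \in K$, so $y$ solves \eqref{eq:obstacleproblem} with right-hand side $u_y$ and $S(u_y) = y$ by uniqueness. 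Finally, for every $u \in U_{ad}$ with $S(u) = y$ the pointwise estimate $u_y\,(u - u_y) \ge 0$ holds a.e.: it is trivial on $\{y > \psi\}$ and on $\{y = \psi\}\cap\{-\Delta\psi \ge 0\}$, and on $\{y = \psi\}\cap\{-\Delta\psi < 0\}$ it becomes $(-\Delta\psi)(u + \Delta\psi) \ge 0$, which follows from $u \le -\Delta\psi < 0$. Integration gives $(u_y, u - u_y)_{L^2} \ge 0$, hence $\|u\|_{L^2}^2 - \|u_y\|_{L^2}^2 = \|u - u_y\|_{L^2}^2 + 2(u_y, u - u_y)_{L^2} \ge \|u - u_y\|_{L^2}^2$, which is \eqref{eq:randomeq287364782}; since $\alpha > 0$ this rewrites as $J(y,u) \ge J(y,u_y) + \tfrac{\alpha}{2}\|u - u_y\|_{L^2}^2$, and \eqref{eq:partialoptimality} follows.

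\emph{Part~(i).} Let $\bar u \in U_{ad}$ be Bouligand stationary with $\bar y := S(\bar u) \in Y_{ad}$ and $\bar\lambda := -\Delta\bar y - \bar u$. Then $\bar y$ is attainable, so $u_{\bar y}$ is available from part~(ii), and it suffices to prove $\bar u = u_{\bar y}$: the formula for $\bar\lambda$ is then the already computed residual $\tilde\lambda$, and \eqref{eq:doublecomplementarity} follows by inspecting the cases $-\Delta\psi \ge 0$ and $-\Delta\psi < 0$ on $\{\bar y = \psi\}$. The crucial observation is that $S$ is constant along the segment $[\bar u, u_{\bar y}]$: for $t \in [0,1]$ put $u_t := (1-t)\bar u + t\,u_{\bar y} \in U_{ad}$; since $\bar u$ and $u_{\bar y}$ both equal $-\Delta\bar y$ a.e.\ on $\{\bar y > \psi\}$ and are both $\le -\Delta\psi = -\Delta\bar y$ a.e.\ on $\{\bar y = \psi\}$ (using $\bar\lambda \ge 0$), the residual $-\Delta\bar y - u_t$ is non-negative a.e.\ and vanishes on $\{\bar y > \psi\}$, so the argument of part~(ii) gives $S(u_t) = \bar y \in Y_{ad}$ and hence $u_t \in \UU_{ad}$. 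Consequently $h := u_{\bar y} - \bar u$ lies in the weak outer tangent cone of $\UU_{ad}$ at $\bar u$ (take $t_n \searrow 0$ and $h_n = h$ for all $n$), and $S'(\bar u; h) = 0$ because the difference quotients of $S$ at $\bar u$ in direction $h$ all vanish. Feeding this into \eqref{eq:BouligandstatcondQ} yields $\alpha\,(\bar u, u_{\bar y} - \bar u)_{L^2} \ge 0$, while \eqref{eq:randomeq287364782} with $u = \bar u$ gives $(u_{\bar y}, \bar u - u_{\bar y})_{L^2} \ge 0$; adding these and using $\alpha > 0$ produces $-\|\bar u - u_{\bar y}\|_{L^2}^2 \ge 0$, so $\bar u = u_{\bar y}$.

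The only genuinely delicate step is the ``constant along the segment'' claim (together with the companion identity $S(u_y) = y$): it expresses that the control may be lowered arbitrarily on the contact set without changing the state, and must be argued through the KKT characterization of $S$ in \cref{th:solutionmapproperties} rather than through any differentiability of $S$. Everything else amounts to bookkeeping with the decomposition $\Omega = \{\bar y = \psi\} \cup \{\bar y > \psi\}$ (disjoint up to a null set) and the sign of $-\Delta\psi$ on the active set.
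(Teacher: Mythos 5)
Your proposal is correct and follows essentially the same route as the paper: the heart of both arguments is that the control can be moved along the segment from $\bar u$ towards $u_{\bar y}$ (i.e., lowered to $\min(0,-\Delta\psi)$ on the active set) without changing the state, so that $h := u_{\bar y}-\bar u$ is an admissible direction with $S'(\bar u;h)=0$ in the Bouligand condition \eqref{eq:BouligandstatcondQ}, together with the same pointwise bookkeeping on $\{\bar y=\psi\}$ split by the sign of $\Delta\psi$. The only (harmless) differences are cosmetic: you prove part~(ii) first and conclude $\bar u=u_{\bar y}$ by adding $\alpha(\bar u,h)_{L^2}\ge 0$ to $(u_{\bar y},\bar u-u_{\bar y})_{L^2}\ge 0$, whereas the paper establishes part~(i) directly through a pointwise sign analysis of $(\bar u,h)_{L^2}$ and then obtains part~(ii) from the same computation with $t=1$.
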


\begin{proof}
Ad \ref{th:Qstructure:i}: Suppose that an arbitrary but fixed Bouligand stationary 
point $\bar u \in  U_{ad}$ with state $\bar y \in Y_{ad}$ and multiplier $\bar \lambda \in L^2(\Omega)$ is given.
Then, the function
\begin{equation*}
\begin{aligned}
h := 
&\mathds{1}_{\{\bar y = \psi\}}\Big (\min(0, -\Delta \psi) - \bar u \Big ) \in L^2(\Omega)
\end{aligned}
\end{equation*}
satisfies
\begin{equation*}
\bar u + t h =
\begin{cases}
\bar u & \text{ a.e.\ in } \{\bar y > \psi\}
\\
(1 - t)\bar u + t \min(0, -\Delta \psi) & \text{ a.e.\ in } \{\bar y = \psi\}
\end{cases}
\end{equation*}
and (due to  \eqref{eq:multiplierformula})
\begin{equation*}
-\Delta \bar y - (\bar u + t h )
=
\begin{cases}
0 & \text{a.e.\ in } \{\bar y > \psi \}  
\\
t \max (0, - \Delta \psi ) + (1 - t)(-\Delta \psi- \bar u) & \text{a.e.\ in } \{\bar y = \psi \}  
\end{cases}
\end{equation*}
for all $t \in [0,1]$.
Since $u_a \leq 0 \leq u_b$ a.e.\ in $\Omega$, 
$0 \leq \bar \lambda = - \Delta \psi - \bar u$ a.e.\ in $\{\bar y = \psi\}$,
and
$\bar u \leq \bar u + t \bar \lambda = (1-t)\bar u - t \Delta \psi \leq - \Delta \psi \leq 0$ a.e.\ in $\{\bar y = \psi,\Delta \psi \geq 0\}$
for all $t \in [0,1]$,
the above identities imply $u_a \leq \bar u + t h \leq u_b$ and $-\Delta \bar y - (\bar u + t h ) \geq 0$ a.e.\ in $\Omega$
for all $t \in [0,1]$.
In particular, 
it holds $\bar u + th \in U_{ad}$ and $S(\bar u + th ) = S(\bar u) = \bar y \in Y_{ad}$ for all $t \in [0,1]$
by the definition of the set $U_{ad}$ and the variational inequality \eqref{eq:obstacleproblem}.
From the definitions of $\UU_{ad}$, $\TT_{\UU_{ad}}^{w{\text -}out}(\bar u)$ and $h$,
the fact that $S(\bar u + th ) - S(\bar u) = 0$ for all $t \in [0,1]$,
the Bouligand stationarity condition \eqref{eq:BouligandstatcondQ},
and again the properties of $\bar \lambda$ and $\bar u$,
we may now deduce that
\begin{equation*}
\begin{aligned}
0 &\leq \left \langle j'(\bar y), S'(\bar u; h) \right \rangle + \alpha \left ( \bar u, h \right )_{L^2}
% \\&
= 0 +  \alpha \left ( \bar u, h \right )_{L^2}
\\
& = - \alpha  \int_{\{\bar y = \psi , \Delta \psi \leq 0\}} \bar u^2 \mathrm{d}x
+
\alpha  \int_{\{\bar y = \psi , \Delta \psi > 0\}} \bar u \left ( -\Delta \psi - \bar u \right )  \mathrm{d}x
\\
&=
 -\alpha \int_{\{\bar y = \psi , \Delta \psi \leq 0\}} \bar u^2 \mathrm{d}x
-
\alpha  \int_{\{\bar y = \psi , \Delta \psi > 0\}} |\bar \lambda + \Delta \psi| \left | -\Delta \psi - \bar u \right |  \mathrm{d}x \leq 0.
\end{aligned}
\end{equation*}
This establishes the formula for $\bar u$. 
The formula for $\bar \lambda$
and the complementarity relation $0 \leq - \bar u \perp \bar \lambda \geq 0$ a.e.\ in $\{\bar y = \psi\}$
now follow immediately from the identity $\bar \lambda = - \Delta \bar y- \bar u$
and a simple computation. 
This completes the proof of the first part of the theorem. 

Ad \ref{th:Qstructure:ii}:
If we are given an arbitrary but fixed control $u \in U_{ad}$ with associated state $y = S(u) \in Y_{ad}$,
then we can use exactly the same calculation as in  \ref{th:Qstructure:i} (with $t=1$)
to prove that the function $u_y = u + \mathds{1}_{\{y = \psi\}}  (\min(0, -\Delta \psi) - u  )$
satisfies $u_y \in U_{ad}$ and $S(u_y) = y \in Y_{ad}$. 
Since 
$u_y = u = -\Delta y$ a.e.\ in $\{y > \psi \}$,
$u_y = 0$ a.e.\ in $\{y = \psi, \Delta \psi \leq 0\}$,
$u_y = - \Delta \psi \leq 0$ a.e.\ in $\{y = \psi, \Delta \psi > 0\}$,
and
$u \leq - \Delta \psi \leq 0$ a.e.\ in $\{y = \psi,\Delta \psi \geq 0\}$
(see part \ref{th:Qstructure:i}), we may further calculate that
\begin{equation*}
\begin{aligned}
\| u - u_y\|_{L^2}^2
&=
\int_{\{y = \psi, \Delta \psi \leq 0\}} u^2 \mathrm{d}x
+
\int_{\{y = \psi, \Delta \psi > 0\}} (u + \Delta \psi)^2 \mathrm{d}x
\\
&=
\int_{\{y = \psi, \Delta \psi \leq 0\}} u^2 \mathrm{d}x
+
\int_{\{y = \psi, \Delta \psi > 0\}}  u^2 + 2 \Delta \psi (u   + \Delta \psi) - (\Delta \psi)^2\mathrm{d}x
\\
&\leq 
\int_{\{y = \psi\}} u^2 \mathrm{d}x - \int_{\{y = \psi, \Delta \psi > 0\}} (\Delta \psi )^2 \mathrm{d}x
= \| u \|_{L^2}^2 - \| u_y\|_{L^2}^2.
\end{aligned}
\end{equation*}
The assertions in \ref{th:Qstructure:ii} now follow immediately. 
\end{proof}

The main point of \cref{the:Qstructure}
is that, for every arbitrary but fixed attainable state $y \in Y_{ad} \cap S(U_{ad})$, 
there is one and only one admissible control 
that is relevant for the analysis of \eqref{eq:Q}, namely the function $u_y$ in \eqref{eq:uydef}. 
All other controls $u \in U_{ad}$ with $y = S(u)$ are suboptimal for \eqref{eq:Q} by \eqref{eq:partialoptimality} and can 
be neglected. 

We would like to point out that the effect that we observe here is a direct consequence 
of the Ti\-kh\-on\-ov regularization term present in the objective function $j(y) +  \frac{\alpha}{2}\|u\|_{L^2}^2$
 of \eqref{eq:Q}.
To see this, recall that, without the Ti\-kh\-on\-ov regularization, i.e., in the case $\alpha = 0$,
the problem \eqref{eq:Q} is maximally ill-posed since for a given state $y \in Y_{ad} \cap S(U_{ad})$
 there are typically infinitely many controls $u$
with $y = S(u)$, namely, all those $u \in U_{ad}$ with
\begin{equation*}
u  
\leq - \Delta \psi   \text{ a.e.\ in } \{y = \psi\}
\qquad\text{and}\qquad 
u
= - \Delta y   \text{ a.e.\ in } \{y > \psi\}.
\end{equation*}
The Ti\-kh\-on\-ov regularization $\frac{\alpha}{2}\|u\|_{L^2}^2$ resolves the above ambiguity by 
making one control energetically more favorable than the others.
This distinguished control is precisely the function $u_y$ that we have calculated in \eqref{eq:uydef}. 
Note that, since the ``partially optimal'' control $u_y$ is uniquely determined 
by the state $y$, the multiplier $\lambda_y$ that is associated with the tuple 
$(y,u_y)$ can be expressed in terms of $y$ as well. Indeed, by using the identity $\lambda_y = -\Delta y - u_y$,
we obtain (analogously to part \ref{th:Qstructure:i} of \cref{the:Qstructure})
\begin{equation}
\label{eq:lambdaydef}
\lambda_y
=
\begin{cases}
\max(0, -\Delta \psi)& \text{a.e.\ in } \{y = \psi\}
\\
0 &\text{a.e.\ in } \{ y > \psi\}  
\end{cases}.
\end{equation}
From the properties of the controls $u_y$ in \cref{the:Qstructure}, we may now deduce:

\begin{corollary}[Reduction to Partially Optimal Controls]
\label{cor:redux1}
Let $\UU_{ad}$ denote the 
effective admissible set of \eqref{eq:Q}, i.e., $\UU_{ad} := \{u \in U_{ad} \mid S(u) \in Y_{ad}\}$.
Suppose further
that a control $\bar u \in \UU_{ad}$ with state $\bar y := S(\bar u)$ is 
given such that $\bar u = u_{\bar y}$ holds, where $u_y$ is defined by \eqref{eq:uydef}
for all $y \in S(\UU_{ad})$. Then, the following holds true:
\begin{enumerate}

\item 
\label{cor:redux1:i}
The existence of a constant $r_1 > 0$ with
\begin{equation}
\label{eq:locoptimality1}
J(S(u), u) \geq J(S(\bar u), \bar u) \qquad \forall u \in \UU_{ad} \cap B_{r_1}^{L^2}(\bar u)
\end{equation}
is equivalent to the existence of a constant $r_2 > 0$ with
\begin{equation}
\label{eq:locoptimality2}
J(S(u), u) \geq J(S(\bar u), \bar u)  \qquad \forall u \in \left \{ u_y \mid y \in S(\UU_{ad}) \right \} \cap B_{r_2}^{L^2}(\bar u).
\end{equation}

\item 
\label{cor:redux1:ii}
The existence of constants $c_1 > 0$, $r_1 > 0$ with
\begin{equation}
\label{eq:quadgrowth1}
J(S(u), u) \geq J(S(\bar u), \bar u)  + \frac{c_1}{2}\|u - \bar u\|_{L^2}^2\qquad \forall u \in \UU_{ad} \cap B_{r_1}^{L^2}(\bar u)
\end{equation}
is equivalent to the existence of constants $c_2 >0$, $r_2 > 0$ with
\begin{equation}
\label{eq:quadgrowth2}
J(S(u), u) \geq J(S(\bar u), \bar u)  + \frac{c_2}{2}\|u - \bar u\|_{L^2}^2\qquad  
\forall u \in \left \{ u_y \mid y \in S(\UU_{ad}) \right \} \cap B_{r_2}^{L^2}(\bar u).
\end{equation}

\item 
\label{cor:redux1:iii}
The estimate \eqref{eq:locoptimality1} (respectively, \eqref{eq:quadgrowth1}) holds with $r_1 = \infty$
if and only if 
the estimate \eqref{eq:locoptimality2} (respectively, \eqref{eq:quadgrowth2}) holds with $r_2 = \infty$.
\end{enumerate}
\end{corollary}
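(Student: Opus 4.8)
The argument rests on the two conclusions of \cref{the:Qstructure}\ref{th:Qstructure:ii}. First, for every $y\in S(\UU_{ad})$ the control $u_y$ from \eqref{eq:uydef} again lies in $\UU_{ad}$ (it belongs to $U_{ad}$ and satisfies $S(u_y)=y\in Y_{ad}$), so that $\{u_y\mid y\in S(\UU_{ad})\}\subseteq\UU_{ad}$. Second, the energy inequality \eqref{eq:randomeq287364782} yields, for any $u\in\UU_{ad}$ with state $y:=S(u)$,
\begin{equation*}
	J(S(u),u)-J(S(u_y),u_y)
	=\frac{\alpha}{2}\bigl(\|u\|_{L^2}^2-\|u_y\|_{L^2}^2\bigr)
	\ge\frac{\alpha}{2}\|u-u_y\|_{L^2}^2\ge 0 .
\end{equation*}
From the first fact, the implications ``\eqref{eq:locoptimality1}$\Rightarrow$\eqref{eq:locoptimality2}'' and ``\eqref{eq:quadgrowth1}$\Rightarrow$\eqref{eq:quadgrowth2}'' are immediate: one merely restricts the assumed estimate from $\UU_{ad}\cap B_{r_1}^{L^2}(\bar u)$ to its subset $\{u_y\mid y\in S(\UU_{ad})\}\cap B_{r_1}^{L^2}(\bar u)$ and takes $r_2:=r_1$ (and $c_2:=c_1$). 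The same observation gives the ``only if'' part of \ref{cor:redux1:iii}.

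For the reverse directions I would fix $u\in\UU_{ad}$, set $y:=S(u)$, and combine the energy inequality with the splitting
\begin{equation*}
	J(S(u),u)-J(S(\bar u),\bar u)
	\ge\bigl[J(S(u_y),u_y)-J(S(\bar u),\bar u)\bigr]+\frac{\alpha}{2}\|u-u_y\|_{L^2}^2 .
\end{equation*}
Assume for the moment that $u_y\in B_{r_2}^{L^2}(\bar u)$. Under \eqref{eq:locoptimality2} the bracket is then $\ge 0$, which already yields \eqref{eq:locoptimality1}; under \eqref{eq:quadgrowth2} the bracket is $\ge\frac{c_2}{2}\|u_y-\bar u\|_{L^2}^2$, and combining this with $\|u-\bar u\|_{L^2}^2\le 2\|u-u_y\|_{L^2}^2+2\|u_y-\bar u\|_{L^2}^2$ produces \eqref{eq:quadgrowth1} with $c_1:=\frac12\min(c_2,\alpha)$. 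When $r_2=\infty$ the assumption $u_y\in B_{r_2}^{L^2}(\bar u)$ is vacuous, so the same chain runs for \emph{every} $u\in\UU_{ad}$ and gives $r_1=\infty$; this settles the ``if'' part of \ref{cor:redux1:iii}.

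It remains to make the temporary assumption legitimate in the local statements \ref{cor:redux1:i}--\ref{cor:redux1:ii}, i.e.\ to exhibit $r_1>0$ such that every $u\in\UU_{ad}\cap B_{r_1}^{L^2}(\bar u)$ satisfies $u_{S(u)}\in B_{r_2}^{L^2}(\bar u)$. Since $\bar u=u_{\bar y}=u_{S(\bar u)}$ by hypothesis, this is precisely continuity of the map $u\mapsto u_{S(u)}$, $U_{ad}\to L^2(\Omega)$, at $\bar u$, which I would derive from the explicit representation $u_{S(u)}-u=\mathds{1}_{\{S(u)=\psi\}}\bigl(\min(0,-\Delta\psi)-u\bigr)$ obtained (with $t=1$) in the proof of \cref{the:Qstructure}\ref{th:Qstructure:ii}. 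Writing $A_u:=\{S(u)=\psi\}$, $\bar A:=\{\bar y=\psi\}$ and using $\bar u=\min(0,-\Delta\psi)$ on $\bar A$, this representation shows that $u_{S(u)}-\bar u$ vanishes on $A_u\cap\bar A$, equals $u-\bar u$ on $\Omega\setminus A_u$, and equals $\min(0,-\Delta\psi)-\bar u$ on $A_u\setminus\bar A$, so that
\begin{equation*}
	\|u_{S(u)}-\bar u\|_{L^2}^2
	\le\|u-\bar u\|_{L^2}^2+\int_{A_u\setminus\bar A}\bigl(\min(0,-\Delta\psi)-\bar u\bigr)^2\,\dx .
\end{equation*}
On $A_u\setminus\bar A$ one has $0<\bar y-\psi=\bar y-S(u)\le\|S(u)-\bar y\|_{L^\infty}\le L_S\|u-\bar u\|_{L^2}$, with $L_S$ the global Lipschitz constant of $S\colon L^2(\Omega)\to L^\infty(\Omega)$ from \cref{th:solutionmapproperties}; hence $A_u\setminus\bar A\subseteq\{0<\bar y-\psi\le L_S\|u-\bar u\|_{L^2}\}$. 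Since $(\min(0,-\Delta\psi)-\bar u)^2\in L^1(\Omega)$ is a fixed function and the sets $\{0<\bar y-\psi\le\delta\}$ decrease to $\emptyset$ as $\delta\searrow0$ (with $|\Omega|<\infty$), absolute continuity of the Lebesgue integral forces the last integral to $0$ as $\|u-\bar u\|_{L^2}\to0$, which yields the claimed continuity.

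The crux of the proof is this last step. The coincidence set $\{S(u)=\psi\}$ of a nearby control need not be contained in $\{\bar y=\psi\}$, so $u_{S(u)}$ cannot be compared with $\bar u=u_{S(\bar u)}$ pointwise; what saves the argument is the \emph{uniform} ($L^\infty$) Lipschitz bound on $S$ together with the fact that the ``extra'' part $\{S(u)=\psi\}\setminus\{\bar y=\psi\}$ of the coincidence set shrinks to a null set as $u\to\bar u$. Everything else is bookkeeping with the energy inequality \eqref{eq:randomeq287364782}.
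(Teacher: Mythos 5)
Your proof is correct and follows essentially the same route as the paper's: both rest on the energy inequality \eqref{eq:randomeq287364782} from \cref{the:Qstructure}\ref{th:Qstructure:ii} together with the continuity of the map $u \mapsto u_{S(u)}$ at $\bar u$, which the paper establishes sequentially in \eqref{eq:randomeq29738247} and you establish via the same pointwise decomposition combined with the $L^\infty$-Lipschitz bound on $S$. The only difference is cosmetic — you argue directly with an explicit $c_1 = \tfrac12\min(c_2,\alpha)$, whereas the paper argues by contradiction along sequences; the underlying estimates are identical.
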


\begin{proof}
To prove \cref{cor:redux1}, we first note that, if
a sequence $\{u_n\} \subset \UU_{ad}$ 
with associated states $y_n := S(u_n) \in Y_{ad}$
converges to $\bar u$, then the sequence of partially optimal controls $\{u_{y_n}\}\subset \UU_{ad}$
converges to $\bar u$ as well. Indeed, for every $\{u_n\} \subset \UU_{ad}$ with $u_n \to \bar u$ in $L^2(\Omega)$ and $y_n:=S(u_n)$,
\eqref{eq:uydef}, the continuity of the map $S : u \mapsto y$ as a function
from $L^2(\Omega)$ to $L^\infty(\Omega)$, the dominated convergence theorem, and our assumption $\bar u = u_{\bar y}$ yield
\begin{equation}
\label{eq:randomeq29738247}
\begin{aligned}
 \left \|
u_{y_n}
-
\bar u
\right \|_{L^2}
&\leq 
\left \|
\mathds{1}_{\{y_n > \psi\}} ( u_n- \bar u)
\right \|_{L^2}
+
\left \|
\mathds{1}_{\{y_n = \psi\}} (\min(0, - \Delta \psi) - \bar u)
\right \|_{L^2}
\\
& 
\leq 
\left \| u_n - \bar u 
\right \|_{L^2}
+
\left \|
\mathds{1}_{\{y_n = \psi, \,  \bar y > \psi \}}(\min(0, - \Delta \psi) - \bar u)
\right \|_{L^2} \to 0
\end{aligned}
\end{equation}
for $n \to \infty$. The claims in \ref{cor:redux1:i}, \ref{cor:redux1:ii}, and \ref{cor:redux1:iii} can now be established as follows:

Ad \ref{cor:redux1:i}: The implication \eqref{eq:locoptimality1} $\Rightarrow$ \eqref{eq:locoptimality2} is trivial.
To establish \eqref{eq:locoptimality2} $\Rightarrow$  \eqref{eq:locoptimality1}, we argue by contradiction:
Suppose that \eqref{eq:locoptimality2} holds with some $r_2 > 0$ and that \eqref{eq:locoptimality1} is violated. Then, we
can find a sequence $\{u_n\} \subset \UU_{ad}$ with $u_n \to \bar u$ in $L^2(\Omega)$ and states $y_n := S(u_n)$
such that $J(y_n, u_n) < J(\bar y, \bar u)$ holds for all $n$. From \eqref{eq:randomeq287364782} and the definition of $J$, we now obtain
\begin{equation*}
J(\bar y, \bar u) > J(y_n, u_n) \geq J(y_n, u_{y_n})
\end{equation*}
for all $n$. Since $u_{y_n} \to \bar u$ in $L^2(\Omega)$, the above contradicts  \eqref{eq:locoptimality2}.
 This proves \ref{cor:redux1:i}.

Ad \ref{cor:redux1:ii}: The proof of \ref{cor:redux1:ii} is along the lines of that of \ref{cor:redux1:i}:
The implication \eqref{eq:quadgrowth1} $\Rightarrow$ \eqref{eq:quadgrowth2} is trivial. 
If \eqref{eq:quadgrowth2} holds with some $c_2 > 0$, $r_2 > 0$, but \eqref{eq:quadgrowth1} is violated,
then we can find a sequence  $\{u_n\} \subset \UU_{ad}$ with associated states $y_n := S(u_n)$ such that $u_n$
converges to $\bar u$ in $L^2(\Omega)$
and such that 
\begin{equation*}
J(y_n, u_n) < J(\bar y, \bar u)  + \frac{1}{2n}\|u_n - \bar u\|_{L^2}^2 
\end{equation*}
holds for all $n$. From \eqref{eq:randomeq287364782}, the definition of $J$, the elementary estimate $a^2 + b^2 \geq (a+b)^2/2$
for all $a, b \in \R$, \eqref{eq:quadgrowth2}, and the convergence $u_{y_n} \to \bar u$ in $L^2(\Omega)$,
it now follows  that
\begin{equation*}
\begin{aligned}
J(\bar y, \bar u)  &> J(y_n, u_n) - \frac{1}{2n}\|u_n - \bar u\|_{L^2}^2
\\
&\geq J(y_n, u_{y_n}) +  \frac{\alpha}{2}\|u_n - u_{y_n}\|_{L^2}^2 - \frac{1}{2n}\|u_n - \bar u\|_{L^2}^2
\\
&\geq J(\bar y, \bar u) +  \frac{c_2}{2}\| u_{y_n} - \bar u\|_{L^2}^2 + \frac{\alpha}{2}\|u_n - u_{y_n}\|_{L^2}^2 - \frac{1}{2n}\|u_n - \bar u\|_{L^2}^2
\\
&\geq J(\bar y, \bar u) +  \frac{\min(\alpha, c_2)}{4}\| u_{n} - \bar u\|_{L^2}^2 - \frac{1}{2n}\|u_n - \bar u\|_{L^2}^2
\end{aligned}
\end{equation*}
holds for all large enough $n$. 
This again yields a contradiction.

Ad \ref{cor:redux1:iii}: The assertions in \ref{cor:redux1:iii} follow straightforwardly from
\eqref{eq:randomeq287364782} and \eqref{eq:partialoptimality}.
\end{proof}

Note that \cref{the:Qstructure} and \cref{cor:redux1} yield that, as far as local/global optima and local/global quadratic growth conditions 
are concerned, instead of the original optimal control problem \eqref{eq:Q}, we can also study the reduced 
minimization problem
\begin{equation}
	\label{eq:ReducedProblem}
	\begin{aligned}
		\text{Minimize} \quad & J(y, u_y) := j(y) + \frac{\alpha}{2}\|u_y\|_{L^2}^2 \\
		\text{w.r.t.}\quad &(y,u_y) \in H_0^1(\Omega) \times L^2(\Omega) \\
		\text{s.t.} \quad& u_y \in U_{ad}, \ \ y \in Y_{ad} \cap K
	\end{aligned}
\end{equation}
with $u_y$ defined as in \eqref{eq:uydef}. 
(Observe that \eqref{eq:uydef} in combination with $y \in Y_{ad} \cap K$ implies $y = S(u_y)$ so that 
we indeed do not have to mention this constraint explicitly here.)
By exploiting the formula \eqref{eq:lambdaydef} for the multiplier associated with $u_y$, we can go even further 
and recast \eqref{eq:Q} as an optimal control problem for the 
Poisson equation with state and control constraints as the following result shows:

\begin{corollary}[Reduction to an Optimal Control Problem for the Poisson Equation]
\label{cor:redux2}
A control $\bar u \in U_{ad}$ with associated state $\bar y := S(\bar u) \in Y_{ad}$
is a local (respectively, global) solution of \eqref{eq:Q} if and only if 
the function $\tilde u := \bar u + \mathds{1}_{\{\bar y = \psi\}}\max(0, -\Delta \psi)$
is a local (respectively, global) solution of the optimal control problem 
\begin{equation}
	\label{eq:StateConstrained}
	\begin{aligned}
		\text{Minimize} \quad & j(y) + \frac{\alpha}{2}\|u\|_{L^2}^2 +  \frac{\alpha}{2}\int_{\{y > \psi\}} \max(0, - \Delta \psi)^2 \mathrm{d}x \\
		\text{w.r.t.}\quad &(y,u) \in H_0^1(\Omega) \times L^2(\Omega) \\
		\text{s.t.}\quad & - \Delta y = u \\
		& u - \mathds{1}_{\{ y = \psi\}}\max(0, -\Delta \psi)\in U_{ad}  \\
		\text{and}\quad& y \in Y_{ad} \cap K.
	\end{aligned}
\end{equation}
\end{corollary}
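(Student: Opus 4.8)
The plan is to deduce \cref{cor:redux2} from \cref{the:Qstructure}, \cref{cor:redux1} and the reduced problem \eqref{eq:ReducedProblem} by carrying out the substitution $u=-\Delta y$. First I would pin down the correspondence $\bar u\leftrightarrow\tilde u$: by \cref{prop:Qwellposedness} every local or global solution $\bar u$ of \eqref{eq:Q} is Bouligand stationary, hence $\bar u=u_{\bar y}$ by part~(i) of \cref{the:Qstructure}, so the associated multiplier equals $\lambda_{\bar y}$, and \eqref{eq:lambdaydef} gives $\tilde u=\bar u+\mathds{1}_{\{\bar y=\psi\}}\max(0,-\Delta\psi)=u_{\bar y}+\lambda_{\bar y}=-\Delta\bar y$; conversely the state belonging to any solution $\tilde u$ of \eqref{eq:StateConstrained} is forced by the equation $-\Delta y=u$ to be $(-\Delta)^{-1}\tilde u$, so it is enough to treat the pair $\bar u=u_{\bar y}$, $\tilde u=-\Delta\bar y$.

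The core of the argument is to show that, via the bijection $y\leftrightarrow(y,-\Delta y)$ between feasible points, problem \eqref{eq:StateConstrained} coincides with \eqref{eq:ReducedProblem} up to an additive constant in the objective. For the constraint set I would use that for $y\in Y_{ad}\cap K\subset H^2(\Omega)$ Stampacchia's lemma yields $\nabla(y-\psi)=0$ and hence $\Delta(y-\psi)=0$ a.e.\ on $\{y=\psi\}$ (exactly as in the proof of \cref{th:solutionmapproperties}), so that $-\Delta y-\mathds{1}_{\{y=\psi\}}\max(0,-\Delta\psi)$ equals the function $u_y$ of \eqref{eq:uydef}; consequently $(y,u)$ is feasible for \eqref{eq:StateConstrained} iff $u=-\Delta y$, $y\in Y_{ad}\cap K$ and $u_y\in U_{ad}$, which by part~(ii) of \cref{the:Qstructure} (together with the fact that $u_y\in U_{ad}$ and $y\in Y_{ad}\cap K$ already force $S(u_y)=y$) is the same as $u=-\Delta y$ and $y\in S(\UU_{ad})$. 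For the objective, the pointwise complementarity $u_y\cdot\lambda_y=0$ a.e.\ (immediate from \eqref{eq:uydef} and \eqref{eq:lambdaydef}) gives $\|{-\Delta y}\|_{L^2}^2=\|u_y\|_{L^2}^2+\|\lambda_y\|_{L^2}^2$ with $\|\lambda_y\|_{L^2}^2=\int_{\{y=\psi\}}\max(0,-\Delta\psi)^2\,\dx$, so that
\[
j(y)+\tfrac{\alpha}{2}\|{-\Delta y}\|_{L^2}^2+\tfrac{\alpha}{2}\int_{\{y>\psi\}}\max(0,-\Delta\psi)^2\,\dx=J(y,u_y)+\tfrac{\alpha}{2}\int_\Omega\max(0,-\Delta\psi)^2\,\dx .
\]

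With these identifications the global assertion is immediate, since \cref{the:Qstructure}, \cref{cor:redux1} and the above show that $\bar u$ is a global solution of \eqref{eq:Q} iff $\bar y$ minimizes \eqref{eq:ReducedProblem} iff $(\bar y,-\Delta\bar y)$ is a global solution of \eqref{eq:StateConstrained}. For the local assertion some care is needed, because the control of \eqref{eq:StateConstrained} is $u=-\Delta y=u_y+\lambda_y$, so $\|u-\tilde u\|_{L^2}=\|\Delta(y-\bar y)\|_{L^2}$, whereas \cref{cor:redux1} phrases local optimality of \eqref{eq:Q} through $\|u_y-u_{\bar y}\|_{L^2}$, and these distances are not equivalent on $S(\UU_{ad})$ since $y\mapsto\lambda_y$, i.e.\ $y\mapsto\mathds{1}_{\{y=\psi\}}$, is discontinuous. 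The implication ``$\tilde u$ locally optimal for \eqref{eq:StateConstrained} $\Rightarrow$ $\bar u$ locally optimal for \eqref{eq:Q}'' is still easy: if $u_n\to\bar u$ in $L^2$ with $u_n\in\UU_{ad}$ and $y_n:=S(u_n)$, then $u_{y_n}\to u_{\bar y}$ in $L^2$ (as in the proof of \cref{cor:redux1}), $y_n\to\bar y$ in $L^\infty$, the $H^2$-bound \eqref{eq:H2reg} gives $\Delta y_n\weakly\Delta\bar y$ and hence $\lambda_{y_n}\weakly\lambda_{\bar y}$ in $L^2$, while the inclusion $\{y_n=\psi\}\,\triangle\,\{\bar y=\psi\}\subset\{0<\bar y-\psi\le\|y_n-\bar y\|_{L^\infty}\}\cup\{\bar y=\psi,\ y_n>\psi\}$ forces $\limsup_n\|\lambda_{y_n}\|_{L^2}\le\|\lambda_{\bar y}\|_{L^2}$; thus $\lambda_{y_n}\to\lambda_{\bar y}$ strongly and $\|\Delta(y_n-\bar y)\|_{L^2}\to0$, and combining this with $J(S(u_n),u_n)\ge J(y_n,u_{y_n})$ from \eqref{eq:partialoptimality} and the objective identity finishes this direction.

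The main obstacle is the reverse implication, where $\|\Delta(y-\bar y)\|_{L^2}$ small does not make $\|u_y-u_{\bar y}\|_{L^2}$ small, and the objective of \eqref{eq:StateConstrained} is genuinely discontinuous in $-\Delta y$ through the term $\int_{\{y>\psi\}}\max(0,-\Delta\psi)^2$. I would overcome this by a case distinction for feasible $(y,-\Delta y)$ with $\|\Delta(y-\bar y)\|_{L^2}\le r$: if $\|u_y-u_{\bar y}\|_{L^2}$ is below the radius supplied by local optimality of $\bar u$ in \eqref{eq:Q}, the objective identity yields the desired inequality directly; otherwise $\|\lambda_y-\lambda_{\bar y}\|_{L^2}\ge\rho/2$ for a fixed $\rho>0$ once $r$ is small, and since the part of $\lambda_y-\lambda_{\bar y}$ supported on $\{y=\psi\}\setminus\{\bar y=\psi\}\subset\{0<\bar y-\psi\le Cr\}$ has $L^2$-norm $\oo(1)$, the remainder, supported on $\{\bar y=\psi,\ y>\psi\}$ where $\lambda_y=0$, forces $\int_{\{\bar y=\psi,\ y>\psi\}}\max(0,-\Delta\psi)^2$ and hence $\int_{\{y>\psi\}}\max(0,-\Delta\psi)^2$ to exceed $\int_{\{\bar y>\psi\}}\max(0,-\Delta\psi)^2$ by a fixed positive amount; as $j(y)+\tfrac{\alpha}{2}\|{-\Delta y}\|_{L^2}^2$ differs from $j(\bar y)+\tfrac{\alpha}{2}\|{-\Delta\bar y}\|_{L^2}^2$ by only $\oo(1)$ (elliptic regularity plus continuity of $j$), shrinking $r$ makes the objective of \eqref{eq:StateConstrained} strictly larger than at $(\bar y,\tilde u)$ in this second case. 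Combining the two cases gives local optimality of $\tilde u$ and completes the proof.
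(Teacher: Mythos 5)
Your proposal is correct, and it shares the paper's backbone (Stampacchia's lemma to identify the feasible set of \eqref{eq:StateConstrained} with $\{(y,-\Delta y)\mid y\in S(\UU_{ad})\}$, and the identity ``\eqref{eq:StateConstrained}-objective $=J(y,u_y)+\tfrac{\alpha}{2}\int_\Omega\max(0,-\Delta\psi)^2\,\dx$'', which you obtain via the orthogonality $u_y\perp\lambda_y$ rather than by rewriting $-\Delta y=-\Delta\psi$ on $\{y=\psi\}$ as in \eqref{eq:random27363}--\eqref{eq:random273632}), but it handles the two locality-transfer steps by genuinely different mechanisms. For the hard direction (local optimality in \eqref{eq:Q} implies local optimality of $\tilde u$ in \eqref{eq:StateConstrained}), the paper argues by contradiction with a sequence $y_n\to\bar y$ in $H^2(\Omega)$ and squeezes, out of the violating inequality \eqref{eq:randomeq2836eg} and a limsup/liminf chain, the $L^2$-convergence of $\mathds{1}_{\{y_n>\psi\}}\max(0,-\Delta\psi)$, hence $u_{y_n}\to\bar u$, contradicting \cref{cor:redux1}; you instead give a direct quantitative dichotomy: either $\norm{u_y-\bar u}_{L^2}$ is small, in which case $J(y,u_y)\ge J(\bar y,\bar u)$ by local optimality of $\bar u$, or it is bounded below, in which case $\norm{\lambda_y-\lambda_{\bar y}}_{L^2}$ is bounded below and, since the contribution on $\{y=\psi\}\setminus\{\bar y=\psi\}\subset\{0<\bar y-\psi\le Cr\}$ is $\oo(1)$ uniformly in $y$ (via $\norm{y-\bar y}_{L^\infty}\le C\norm{\Delta(y-\bar y)}_{L^2}$), the jump term $\int_{\{\bar y=\psi,\,y>\psi\}}\max(0,-\Delta\psi)^2\,\dx$ exceeds a fixed positive amount that dominates the uniformly $\oo(1)$ continuous part $j(y)+\tfrac{\alpha}{2}\norm{\Delta y}_{L^2}^2$. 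For the easy direction you replace the paper's weak-lower-semicontinuity squeeze by a Radon--Riesz argument: $\lambda_{y_n}\weakly\lambda_{\bar y}$ together with $\limsup_n\norm{\lambda_{y_n}}_{L^2}\le\norm{\lambda_{\bar y}}_{L^2}$ (read off from the structure of the contact sets) gives strong convergence and hence $\Delta y_n\to\Delta\bar y$ in $L^2(\Omega)$. Both of your mechanisms are sound; the quantitative case distinction is arguably more transparent about \emph{why} the discontinuity of $y\mapsto\mathds{1}_{\{y=\psi\}}$ cannot destroy local optimality (a jump in the contact-set term is energetically penalized), whereas the paper's sequential arguments avoid tracking uniform constants. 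One small caveat, which you share with the paper's own proof: in the direction ``$\tilde u$ locally optimal for \eqref{eq:StateConstrained} $\Rightarrow$ $\bar u$ locally optimal for \eqref{eq:Q}'' your opening reduction to the pair $\bar u=u_{\bar y}$, $\tilde u=-\Delta\bar y$ is asserted rather than derived (if $\bar u\ne u_{\bar y}$, the Poisson state of $\tilde u$ need not be $\bar y$); the paper glosses over the same point by silently redefining $\bar y$ as the Poisson solution of $\tilde u$ in that direction, so this is a defect of the statement's bookkeeping rather than of your argument.
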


\begin{proof}
Since our assumptions on $\Omega$ imply that there exists an absolute constant $C>0$
with $\|y\|_{H^2} \leq C \| \Delta y\|_{L^2}$ for all $y \in H_0^1(\Omega) \cap H^2(\Omega)$,
see \cite[Theorem 9.15, Lemma 9.17]{GilbargTrudinger2001} and \cite[Theorem 3.2.1.2]{Grisvard1985},
we obtain that $\tilde u \in L^2(\Omega)$ is a local (respectively, global) optimum 
of \eqref{eq:StateConstrained} if and only if the solution $\tilde y \in H_0^1(\Omega) \cap H^2(\Omega)$ 
of $-\Delta \tilde y = \tilde u$
is a local (respectively, global) optimum of  
\begin{equation}
\label{eq:random27363}
	\begin{aligned}
		\text{Minimize} \quad & j(y) + \frac{\alpha}{2}\|\Delta y\|_{L^2}^2 +  \frac{\alpha}{2}\int_{\{y > \psi\}} \max(0, - \Delta \psi)^2 \mathrm{d}x \\
		\text{w.r.t.}\quad &y \in  H_0^1(\Omega) \cap H^2(\Omega) \\
		\text{s.t.}\quad& -\Delta y - \mathds{1}_{\{ y = \psi\}}\max(0, -\Delta \psi)\in U_{ad}  \\
		\text{and}\quad& y \in Y_{ad} \cap K.
	\end{aligned}
\end{equation}
Here, with ``local'' we mean local w.r.t.\ the $L^2$-norm when referring to $\tilde u$ 
and local w.r.t.\ the $H^2$-norm when referring to $\tilde y$.
From Stampacchia's lemma,
we deduce that $-\Delta y = - \Delta \psi$ holds a.e.\ in $\{y = \psi\}$ for all $y \in Y_{ad}\cap K $.
If we exploit this identity, the definitions \eqref{eq:uydef} and \eqref{eq:lambdaydef},
and the fact that absolute constants are irrelevant for the minimization of the objective in \eqref{eq:random27363},
then we obtain that \eqref{eq:random27363} can also be written as
\begin{equation}
\label{eq:random273632}
	\begin{aligned}
		\text{Minimize} \quad & j(y) + \frac{\alpha}{2}\|u_y\|_{L^2}^2\\
		\text{w.r.t.}\quad &y \in  H_0^1(\Omega) \cap H^2(\Omega) \\
		\text{s.t.}\quad& u_y \in U_{ad}, \ \ y \in Y_{ad} \cap K.
	\end{aligned}
\end{equation}
The assertion for global optima is now a straightforward consequence of \cref{the:Qstructure} and \cref{cor:redux1}, cf.\ \eqref{eq:ReducedProblem}. It remains to prove the claim for local solutions. 
To this end, we again argue by contradiction:
Let us first assume that 
there exists $\bar u \in U_{ad}$ with state $\bar y := S(\bar u)$ 
such that $\bar u$ is a local solution of \eqref{eq:Q} 
and such that $\bar u + \mathds{1}_{\{\bar y = \psi\}}\max(0, -\Delta \psi)$
is not a local solution of \eqref{eq:StateConstrained}.
Then, 
\cref{the:Qstructure}
yields that  $\bar u$ and $\bar y$ satisfy $\bar u = u_{\bar y}$
and
 $-\Delta \bar y = \bar u + \mathds{1}_{\{\bar y = \psi\}}\max(0, -\Delta \psi)$,
the function $\bar u + \mathds{1}_{\{\bar y = \psi\}}\max(0, -\Delta \psi)$ is 
admissible for  \eqref{eq:StateConstrained},
and
we obtain from \eqref{eq:random27363} that we can find  a sequence $\{y_n\} \subset Y_{ad} \cap K$
with $u_{y_n} \in U_{ad}$, $y_n \to \bar y$ in $H^2(\Omega)$ and 
\begin{equation}
\label{eq:randomeq2836eg}
\begin{aligned}
& j(y_n) + \frac{\alpha}{2}\|\Delta y_n\|_{L^2}^2 +  \frac{\alpha}{2}\int_{\{y_n > \psi\}} \max(0, - \Delta \psi)^2 \mathrm{d}x
\\
&\quad <  
j(\bar y) + \frac{\alpha}{2}\|\Delta \bar y\|_{L^2}^2 +  \frac{\alpha}{2}\int_{\{\bar y > \psi\}} \max(0, - \Delta \psi)^2 \mathrm{d}x
\end{aligned}
\end{equation}
for all $n$. 
By taking the limes superior in \eqref{eq:randomeq2836eg}, we obtain 
\begin{equation}
\label{eq:randomeq362g62z3e}
\begin{aligned}
&j(\bar y) + \frac{\alpha}{2}\|\Delta \bar y\|_{L^2}^2 
+  \frac{\alpha}{2}\int_{\Omega} \mathds{1}_{\{\bar y > \psi\}} \max(0, - \Delta \psi)^2 \mathrm{d}x
\\
&\quad \geq \limsup_{n \to \infty}
j(y_n) + \frac{\alpha}{2}\|\Delta y_n\|_{L^2}^2 
+  \frac{\alpha}{2}\int_{\Omega} \mathds{1}_{\{y_n > \psi\}} \max(0, - \Delta \psi)^2 \mathrm{d}x
\\
&\quad \geq \liminf_{n \to \infty}
j(y_n) + \frac{\alpha}{2}\|\Delta y_n\|_{L^2}^2 
+  \frac{\alpha}{2}\int_{\Omega} \mathds{1}_{\{y_n > \psi\}} \max(0, - \Delta \psi)^2 \mathrm{d}x
\\
&\quad \geq \liminf_{n \to \infty}
j(y_n) + \frac{\alpha}{2}\|\Delta y_n\|_{L^2}^2 +  \frac{\alpha}{2}\int_{\Omega} \mathds{1}_{\{y_n > \psi, \ \bar y > \psi\}} \max(0, - \Delta \psi)^2 \mathrm{d}x
\\
&\quad =
j(\bar y) + \frac{\alpha}{2}\|\Delta \bar y\|_{L^2}^2 +  \frac{\alpha}{2}\int_{\Omega} \mathds{1}_{\{\bar y > \psi\}} \max(0, - \Delta \psi)^2 \mathrm{d}x,
\end{aligned}
\end{equation}
where the last equality follows from the dominated convergence theorem and $y_n \to \bar y$ in $H^2(\Omega)$, cf.\
the arguments in \eqref{eq:randomeq29738247}.
The above implies in particular that 
\begin{equation*}
\int_{\Omega} \mathds{1}_{\{y_n > \psi\}} \max(0, - \Delta \psi)^2 \mathrm{d}x \to \int_{\Omega} \mathds{1}_{\{\bar y > \psi\}} \max(0, - \Delta \psi)^2 \mathrm{d}x
\end{equation*}
and, as a consequence, that
\begin{equation*}
\begin{aligned}
&\int_{\Omega} \left ( \mathds{1}_{\{y_n > \psi\}} \max(0, - \Delta \psi) - \mathds{1}_{\{\bar y > \psi\}} \max(0, - \Delta \psi) \right )^2 \mathrm{d}x
\\
&\quad = 
\int_{\Omega}  
\max(0, - \Delta \psi)^2 \left ( \mathds{1}_{\{y_n > \psi\}} - 2 \mathds{1}_{\{\bar y > \psi, y_n > \psi\}}  +  \mathds{1}_{\{\bar y > \psi\}} \right )  \mathrm{d}x \to 0
\end{aligned}
\end{equation*}
for $n \to \infty$. Thus,
$
 \mathds{1}_{\{y_n > \psi\}} \max(0, - \Delta \psi) \to \mathds{1}_{\{\bar y > \psi\}} \max(0, - \Delta \psi) 
$
in $L^2(\Omega)$ for $n \to \infty$, and we may use \eqref{eq:lambdaydef} to deduce that 
the multipliers $\lambda_{\bar y}$ and $\lambda_{y_n}$ associated with the controls $\bar u = u_{\bar y}$ and $u_{y_n}$ satisfy
$
\lambda_{y_n} \to \lambda_{\bar y}
$
for $n \to \infty$ in $L^2(\Omega)$. 
Since the sequence $\{y_n\}$ satisfies $\Delta y_n \to \Delta \bar y $ in $L^2(\Omega)$
by its construction, the convergence $\lambda_{y_n} \to \lambda_{\bar y}$ in $L^2(\Omega)$ yields  
$u_{y_n} = -\Delta y_n - \lambda_{y_n} \to -\Delta \bar y- \lambda_{\bar y} = \bar u$ in $L^2(\Omega)$.
From \eqref{eq:randomeq2836eg}, we may now deduce that $\bar u$ does not satisfy
an inequality of the form \eqref{eq:locoptimality2}, and from \cref{cor:redux1}
that $\bar u$ cannot be locally optimal for \eqref{eq:Q}. This is a contradiction. 
For a local solution $\bar u$ of \eqref{eq:Q}, the function $\tilde u := \bar u + \mathds{1}_{\{\bar y = \psi\}}\max(0, -\Delta \psi)$
is thus always a local solution of \eqref{eq:StateConstrained}.

To prove the reverse implication, we can proceed along similar lines:
Let us assume  that there exists a local minimum
$\tilde u$ of \eqref{eq:StateConstrained}
such that $\bar u := \tilde u - \mathds{1}_{\{\bar y = \psi\}}\max(0, -\Delta \psi)$
is not a local minimum of \eqref{eq:Q}, where $\bar y$ denotes the solution of $-\Delta \bar y = \tilde u$.
Then, it follows from the conditions in \eqref{eq:StateConstrained}, the formulas \eqref{eq:uydef} and \eqref{eq:lambdaydef},
and \cref{cor:redux1}
that $\bar u = u_{\bar y}$ and $\bar y = S(\bar u)$ holds, that $\bar u$ is admissible for \eqref{eq:ReducedProblem},
and that there exists a sequence $\{y_n\} \subset Y_{ad} \cap K$ with $u_{y_n} \in U_{ad}$ for all $n$, 
$u_{y_n} \to \bar u$ in $L^2(\Omega)$ for $n \to \infty$ and 
\begin{equation}
\label{eq:randomeq1836623}
 j(y_n) + \frac{\alpha}{2}\|u_{y_n}\|_{L^2}^2
<
 j(\bar y) + \frac{\alpha}{2}\|\bar u\|_{L^2}^2
\end{equation}
for all $n$. 
Note that the convergence $u_{y_n} \to \bar u$ in $L^2(\Omega)$, the identity $y_n = S(u_{y_n})$, the estimate \eqref{eq:H2reg} 
and \cref{th:solutionmapproperties} imply that $y_n$ has to converge weakly in $H^2(\Omega)$ and strongly in $H^1(\Omega)$ to $\bar y$.
By rewriting \eqref{eq:randomeq1836623} analogously to \eqref{eq:random27363},
by taking the limes superior, by exploiting the weak lower semicontinuity of 
continuous and convex functions, and by using the same arguments as in \eqref{eq:randomeq362g62z3e},
we now obtain 
\begin{equation*}
\begin{aligned}
&j(\bar y) + \frac{\alpha}{2}\|\Delta \bar y\|_{L^2}^2 
+  \frac{\alpha}{2}\int_{\Omega} \mathds{1}_{\{\bar y > \psi\}} \max(0, - \Delta \psi)^2 \mathrm{d}x
\\
&\quad \geq \limsup_{n \to \infty}
j(y_n) + \frac{\alpha}{2}\|\Delta y_n\|_{L^2}^2 
+  \frac{\alpha}{2}\int_{\Omega} \mathds{1}_{\{y_n > \psi\}} \max(0, - \Delta \psi)^2 \mathrm{d}x
\\
&\quad \geq 
j(\bar y)  +  \frac{\alpha}{2}\int_{\Omega} \mathds{1}_{\{\bar y > \psi\}} \max(0, - \Delta \psi)^2 \mathrm{d}x
+
\limsup_{n \to \infty}
 \frac{\alpha}{2}\|\Delta y_n\|_{L^2}^2 
\\
&\quad \geq 
j(\bar y)  +  \frac{\alpha}{2}\int_{\Omega} \mathds{1}_{\{\bar y > \psi\}} \max(0, - \Delta \psi)^2 \mathrm{d}x
+
\liminf_{n \to \infty}
 \frac{\alpha}{2}\|\Delta y_n\|_{L^2}^2 
\\
&\quad \geq
j(\bar y) + \frac{\alpha}{2}\|\Delta \bar y\|_{L^2}^2 +  \frac{\alpha}{2}\int_{\Omega} \mathds{1}_{\{\bar y > \psi\}} \max(0, - \Delta \psi)^2 \mathrm{d}x.
\end{aligned}
\end{equation*}
The above implies $\Delta y_n \to \Delta \bar y$ in $L^2(\Omega)$
and, again by the estimate  $\|y\|_{H^2} \leq C \| \Delta y\|_{L^2}$ for all $y \in H_0^1(\Omega) \cap H^2(\Omega)$,
that $y_n \to \bar y$ in $H^2(\Omega)$. The inequality \eqref{eq:randomeq1836623} now yields
that $\bar y$ cannot be a local optimum of \eqref{eq:random273632} and, by 
the considerations at the beginning of this proof, that $\bar y$ cannot be locally optimal for \eqref{eq:random27363}
and that $\tilde u$ cannot be locally optimal for \eqref{eq:StateConstrained}.
This again contradicts our assumptions and completes the proof. 
\end{proof}

Several things are noteworthy regarding the last result:

\begin{remark}~
\begin{itemize}
\item In the literature, optimal control problems with state constraints and 
optimal control problems governed by obstacle-type variational inequalities 
are typically treated as two different problem classes, cf.\ the discussion in 
\cite[Section 1]{KunischWachsmuth2012}.
\cref{cor:redux2} shows that this distinction is, in fact, not entirely appropriate
since it is perfectly possible to restate an optimal control problem of the form \eqref{eq:Q}
as a state- and control-constrained optimal control problem for the Poisson equation (albeit with a modified objective function).

\item We would like to point out that the reformulation \eqref{eq:StateConstrained} 
of the problem \eqref{eq:Q} implies that
it is energetically favorable for a tuple $(y, u)$  
to have a large contact set $\{y=\psi\}$ in those parts of the domain $\Omega$
where the Laplacian $\Delta \psi$ is negative. This also makes sense in view 
of formula \eqref{eq:uydef} which yields that
the partially optimal control $u_y$ vanishes a.e.\ in the set $\{y = \psi,\, \Delta \psi < 0\}$.
A similar behavior is not present when, e.g., a state-constrained 
tracking-type optimal control problem governed by the Poisson equation is considered.  
\end{itemize}
\end{remark}

An important observation at this point is that both the additional term in the objective function
 of \eqref{eq:StateConstrained} and 
the right-hand side of  \eqref{eq:lambdaydef}  only depend on the negative part of the Laplacian $\Delta \psi$.
If $\Delta \psi \geq 0$ holds a.e.\ in $\Omega$, then $\lambda_y$ is identical zero for all states $y$,
the objective of \eqref{eq:StateConstrained} is identical to that of \eqref{eq:Q},
and the analysis simplifies drastically as the following section shows.
 
\section{Enhanced Second-Order Conditions, Global Optimality and Quadratic Growth for Subharmonic Obstacles}
\label{sec:5}

In the special case of a subharmonic obstacle, i.e., if $\Delta \psi \geq 0$ holds a.e.\ in $\Omega$,
the findings of \cref{sec:4} give rise to the following, quite remarkable result:

\begin{theorem}[Reformulation of Problems with Subharmonic Obstacles]
\label{th:reduxsubharmonic}
Suppose that the function $\psi$ satisfies $\Delta \psi \geq 0$ a.e.\ in $\Omega$.
Then, \eqref{eq:Q} is equivalent to the control- and state-constrained 
optimal control problem 
\begin{equation}
	\label{eq:StateConstrainedSubharmonic}
	\begin{aligned}
		\text{Minimize} \quad & j(y) + \frac{\alpha}{2}\|u\|_{L^2}^2  \\
		\text{w.r.t.}\quad &(y,u) \in H_0^1(\Omega) \times L^2(\Omega) \\
		\text{s.t.}\quad & - \Delta y = u \\
		\text{and}\quad& u \in U_{ad},\quad y \in Y_{ad} \cap K
	\end{aligned}
\end{equation}
in the following sense:
\begin{enumerate}
\item Every local (respectively, global) solution $\bar u$ of \eqref{eq:Q} is a local (respectively, global) solution 
of \eqref{eq:StateConstrainedSubharmonic} and vice versa.
\item A point $\bar u \in U_{ad}$ with associated state $\bar y := S(\bar u) \in Y_{ad}$ 
satisfies a local quadratic growth condition of the form \eqref{eq:quadgrowth1}
with constants $r, c > 0$ for the problem \eqref{eq:Q} if and only if 
an analogous local quadratic growth condition (with possibly different constants) holds for \eqref{eq:StateConstrainedSubharmonic}.
\item A point $\bar u \in U_{ad}$ with associated state $\bar y := S(\bar u) \in Y_{ad}$   satisfies a global quadratic growth condition for  \eqref{eq:Q}
(i.e., an inequality of the form \eqref{eq:quadgrowth1} with a constant $c>0$ and $r = \infty$) 
if and only if an analogous global quadratic growth condition (with a possibly different constant $c$) holds for \eqref{eq:StateConstrainedSubharmonic}.
\end{enumerate}
\end{theorem}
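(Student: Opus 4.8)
The plan is to reduce everything to the results of \cref{sec:4}, exploiting that subharmonicity makes the negative part of $\Delta\psi$ disappear. First I would record the key simplification: if $\Delta\psi \geq 0$ a.e.\ in $\Omega$, then $\max(0,-\Delta\psi) = 0$ a.e.\ in $\Omega$, and Stampacchia's lemma (which yields $-\Delta y = -\Delta\psi$ a.e.\ on $\{y=\psi\}$ for every $y \in K$ with $\Delta y \in L^2(\Omega)$) together with \eqref{eq:uydef} and \eqref{eq:lambdaydef} gives $u_y = -\Delta y$ and $\lambda_y = 0$ for every attainable state $y$. Inserting $\max(0,-\Delta\psi) = 0$ into \eqref{eq:StateConstrained}, the extra term in its objective vanishes, the constraint $u - \mathds{1}_{\{y=\psi\}}\max(0,-\Delta\psi)\in U_{ad}$ collapses to $u \in U_{ad}$, and the shift $\tilde u = \bar u + \mathds{1}_{\{\bar y=\psi\}}\max(0,-\Delta\psi)$ becomes the identity $\tilde u = \bar u$; hence \eqref{eq:StateConstrained} is literally \eqref{eq:StateConstrainedSubharmonic}.

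For assertion (i) I would then invoke \cref{cor:redux2} directly. The only point to check is that the feasible controls of \eqref{eq:StateConstrainedSubharmonic} are exactly the $u \in U_{ad}$ with $S(u) \in Y_{ad}$: if $u \in U_{ad}$ and the solution $y$ of $-\Delta y = u$ satisfies $y \in Y_{ad}\cap K$, then $\lambda := -\Delta y - u \equiv 0$ is a non-negative multiplier with $\langle\lambda, y-\psi\rangle = 0$, so $y = S(u)$ by the characterization in \cref{th:solutionmapproperties}. With this, \cref{cor:redux2} gives that $\bar u$ is a local (resp.\ global) solution of \eqref{eq:Q} if and only if $\tilde u = \bar u$ is a local (resp.\ global) solution of \eqref{eq:StateConstrained}\,$=$\,\eqref{eq:StateConstrainedSubharmonic}, which is precisely assertion (i).

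For assertions (ii) and (iii) the plan is to use \cref{cor:redux1}\ref{cor:redux1:ii},\ref{cor:redux1:iii}. The feasible control set of \eqref{eq:StateConstrainedSubharmonic} equals $\{u_y \mid y \in S(\UU_{ad})\}$ — by the previous paragraph a feasible $u$ has $y := S(u) \in Y_{ad}\cap K$ and $u = -\Delta y = u_y$, and conversely for $y \in S(\UU_{ad})$ one has $u_y = -\Delta y \in U_{ad}$ by \cref{the:Qstructure} with $S(u_y) = y \in Y_{ad}$ — and on this set the objective of \eqref{eq:StateConstrainedSubharmonic} is exactly $J(S(u_y),u_y)$. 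Hence the local (resp.\ global) quadratic growth condition for \eqref{eq:StateConstrainedSubharmonic} at $\bar u$ coincides with \eqref{eq:quadgrowth2} (resp.\ its $r_2 = \infty$ version), and it suffices to apply \cref{cor:redux1}. Its hypothesis ``$\bar u = u_{\bar y}$'' I would verify as follows: if $\bar u$ satisfies \eqref{eq:quadgrowth1} for \eqref{eq:Q} with $c>0$, then $\bar u$ is a local solution of \eqref{eq:Q}, hence Bouligand stationary by \cref{prop:Qwellposedness}, hence $\bar u = u_{\bar y}$ by \cref{the:Qstructure}\ref{th:Qstructure:i}; and if $\bar u$ satisfies the analogous condition for \eqref{eq:StateConstrainedSubharmonic}, then $\bar u$ is a local solution of \eqref{eq:StateConstrainedSubharmonic}, hence of \eqref{eq:Q} by assertion (i), so again $\bar u = u_{\bar y}$. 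In both directions of the equivalences in (ii) and (iii), \cref{cor:redux1} then applies and delivers the claim.

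The main obstacle I anticipate is the bookkeeping around the two notions of ``local solution'' and ``quadratic growth'': making sure that the feasible set of the Poisson problem \eqref{eq:StateConstrainedSubharmonic} contains no ``spurious'' controls whose Poisson state differs from the obstacle-problem state (handled by the vanishing-multiplier argument above), and that the quadratic-growth equivalence genuinely requires identifying $\bar u$ with the partially optimal control $u_{\bar y}$, which is available only because quadratic growth forces local optimality and hence Bouligand stationarity. Everything else is a direct specialization of \cref{the:Qstructure}, \cref{cor:redux1} and \cref{cor:redux2} to the case $\Delta\psi \geq 0$.
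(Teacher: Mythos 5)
Your proposal is correct and takes essentially the same route as the paper: since $\Delta\psi\ge 0$ forces $\max(0,-\Delta\psi)=0$, hence $u_y=-\Delta y$ and $\lambda_y=0$ for all attainable states, the problems \eqref{eq:ReducedProblem} and \eqref{eq:StateConstrained} collapse to \eqref{eq:StateConstrainedSubharmonic}, and all three assertions follow from \cref{the:Qstructure}, \cref{cor:redux1} and \cref{cor:redux2} exactly as you argue, with your vanishing-multiplier identification of the feasible set and the Bouligand-stationarity verification of $\bar u=u_{\bar y}$ filling in details the paper leaves implicit. One minor slip: in your paragraph on assertion (i), the feasible controls of \eqref{eq:StateConstrainedSubharmonic} are not \emph{exactly} the $u\in U_{ad}$ with $S(u)\in Y_{ad}$ (a control in $\UU_{ad}$ need not satisfy $-\Delta S(u)=u$, since its obstacle multiplier can be nonzero even for subharmonic $\psi$); only the inclusion you actually prove holds, which is all that is needed there, and you state the correct identification with $\{u_y\mid y\in S(\UU_{ad})\}$ in the subsequent paragraph.
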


\begin{proof}
From the non-negativity of the Laplacian $\Delta \psi$ a.e.\ in $\Omega$
and the formulas \eqref{eq:uydef} and \eqref{eq:lambdaydef}, we obtain that 
the partially optimal controls $u_y$ and the associated multipliers $\lambda_y$ satisfy
$u_y = - \Delta y$ and $\lambda_y = 0$ for all attainable states $y$. This implies in 
particular that the problems \eqref{eq:ReducedProblem}  and \eqref{eq:StateConstrained}
take precisely the form \eqref{eq:StateConstrainedSubharmonic}.
The claims of the theorem now follow immediately from \cref{the:Qstructure,cor:redux1,cor:redux2}. 
\end{proof}

As \cref{th:reduxsubharmonic} shows, under the assumption of subharmonicity,
the optimal control problem \eqref{eq:Q} for the 
obstacle problem and the optimal control problem \eqref{eq:StateConstrainedSubharmonic} for the Poisson equation
are fully equivalent in terms of local/global optima and local/global quadratic growth properties. 
In particular, we may conclude that every condition that is necessary/sufficient
for local/global optimality or local/global quadratic growth in \eqref{eq:StateConstrainedSubharmonic}
is also necessary/sufficient 
for local/global optimality or local/global quadratic growth in \eqref{eq:Q} and vice versa. 
By exploiting this observation, we obtain, e.g., the following result: 

\begin{corollary}[SSC for Local Optimality in the Presence of Subharmonicity]
\label{cor:subharmonicSSC1}
Suppose that $\psi$ satisfies $\Delta \psi \geq 0$ a.e.\ in $\Omega$ and that $Y_{ad}$ is convex. Assume further that
a control
$\bar u \in U_{ad}$ with state $\bar y := S(\bar u) \in Y_{ad}$ is given such that $\bar u$
satisfies the Bouligand stationarity condition \eqref{eq:BouligandstatcondQ} of \eqref{eq:Q} and such that 
\begin{equation}
\label{eq:anSSC}
j''(\bar y)T(h)^2 + \alpha \|h\|_{L^2}^2 >  0 \quad \forall h \in 
\TT_{\UU_{ad}^{biact}}(\bar u) \setminus\{0\}\text{ with } \left \langle j'(\bar y), T(h) \right \rangle + \alpha \left ( \bar u, h \right )_{L^2} = 0
\end{equation}
holds, 
where $T: H^{-1}(\Omega) \to H_0^1(\Omega)$ denotes the solution map of the Poisson equation, 
where $\UU_{ad}^{biact}$ denotes the set of all controls of the effective admissible set $\UU_{ad}$ of \eqref{eq:Q}
whose states have a vanishing multiplier, i.e.,  $\UU_{ad}^{biact} = U_{ad} \cap T^{-1}(K \cap Y_{ad})$,
and where $\TT_{\UU_{ad}^{biact}}(\bar u)$ denotes the tangent cone of $\smash{\UU_{ad}^{biact}}$ at $\bar u$. Then, 
there exist constants $r, c > 0$ such that $\bar u$ satisfies a local quadratic growth condition of the form \eqref{eq:quadgrowth1}
for \eqref{eq:Q}.
\end{corollary}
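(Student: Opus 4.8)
The plan is to reduce everything, via the second assertion of \cref{th:reduxsubharmonic}, to a standard second-order sufficient optimality condition for the control- and state-constrained optimal control problem \eqref{eq:StateConstrainedSubharmonic} for the Poisson equation. The key point is that, in contrast to \eqref{eq:Q}, the latter problem is a twice continuously differentiable minimization problem over a closed convex subset of $L^2(\Omega)$, so that the classical Legendre-form theory of second-order conditions applies.

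Step~1 (consequences of the hypotheses and the reduction). Since $\bar u$ is Bouligand stationary for \eqref{eq:Q}, part~\ref{th:Qstructure:i} of \cref{the:Qstructure} yields $\bar u = u_{\bar y}$, and, because $\Delta\psi \ge 0$ a.e.\ in $\Omega$, the formulas \eqref{eq:uydef} and \eqref{eq:lambdaydef} collapse to $u_{\bar y} = -\Delta\bar y$ and $\lambda_{\bar y} = 0$. Hence the multiplier $\bar\lambda := -\Delta\bar y - \bar u$ vanishes, $\bar y = T(\bar u)$, and therefore $\bar u \in \UU_{ad}^{biact} = U_{ad} \cap T^{-1}(K\cap Y_{ad})$. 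Using the convexity of $U_{ad}$, $K$ and $Y_{ad}$, the linearity and boundedness of $T$ (note $\|Tu\|_{H^2}\le C\|u\|_{L^2}$; cf.\ \eqref{eq:H2reg} and the proof of \cref{cor:redux2}), the weak closedness of $Y_{ad}$ in $H^2(\Omega)$, and Rellich's compact embedding theorem, one checks that $\UU_{ad}^{biact}$ is a nonempty, closed, convex subset of $L^2(\Omega)$, and, by inspection of its constraints, that it coincides with the feasible set of \eqref{eq:StateConstrainedSubharmonic}. By the second assertion of \cref{th:reduxsubharmonic} it thus suffices to prove a local quadratic growth estimate at $\bar u$ for the $C^2$-functional $\hat J(u) := j(Tu) + \tfrac{\alpha}{2}\|u\|_{L^2}^2$ over $\UU_{ad}^{biact}$, for which $\hat J'(\bar u)h = \left\langle j'(\bar y), Th\right\rangle + \alpha(\bar u, h)_{L^2}$ and $\hat J''(\bar u)(h,h) = j''(\bar y)T(h)^2 + \alpha\|h\|_{L^2}^2$.

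Step~2 (the hypotheses are the first- and second-order conditions for \eqref{eq:StateConstrainedSubharmonic}). Since $\bar\lambda = 0$ one has $\TT_K(\bar y)\cap\bar\lambda^\perp = \TT_K(\bar y)$, and for $h \in \R^+(\UU_{ad}^{biact} - \bar u)$ the function $T(h)$ lies in $\R^+(K-\bar y)\subseteq\TT_K(\bar y)$ and satisfies the variational inequality \eqref{eq:VIdirdiff} with equality; by uniqueness $S'(\bar u; h) = T(h)$, and this identity extends to all $h\in\TT_{\UU_{ad}^{biact}}(\bar u)$ by continuity of $T$ and of $S'(\bar u;\cdot)\colon H^{-1}(\Omega)\to H_0^1(\Omega)$. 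Moreover, for $h$ in the pre-closure cone $\R^+(\UU_{ad}^{biact} - \bar u)$, convexity gives $\bar u + th\in\UU_{ad}^{biact}\subseteq\UU_{ad}$ for all small $t>0$, hence $h\in\TT_{\UU_{ad}}^{w{\text -}out}(\bar u)$; combining this with the Bouligand stationarity \eqref{eq:BouligandstatcondQ}, the identity $S'(\bar u;h) = T(h)$, and the continuity of the bounded linear functional $\hat J'(\bar u)$, we obtain $\hat J'(\bar u)h\ge 0$ for every $h\in\TT_{\UU_{ad}^{biact}}(\bar u)$. Consequently $\bar u$ is a stationary point of \eqref{eq:StateConstrainedSubharmonic}, its critical cone is $\{h\in\TT_{\UU_{ad}^{biact}}(\bar u): \hat J'(\bar u)h = 0\}$, and \eqref{eq:anSSC} says exactly that $\hat J''(\bar u)(h,h)>0$ on this cone minus $\{0\}$.

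Step~3 (quadratic growth by contradiction). Now the argument is the one used in the proof of \cref{th:SSCmajor}\ref{item:MajorSSC-i}, but substantially simpler because $T$ is linear. If \eqref{eq:StateConstrainedSubharmonic} admitted no local quadratic growth at $\bar u$, we could pick $\{u_n\}\subset\UU_{ad}^{biact}$ with $u_n\to\bar u$ in $L^2(\Omega)$ and $\hat J(u_n)<\hat J(\bar u)+\tfrac{1}{2n}\|u_n-\bar u\|_{L^2}^2$; setting $t_n := \|u_n-\bar u\|_{L^2}\searrow 0$ and $h_n := (u_n-\bar u)/t_n$, we have $\|h_n\|_{L^2}=1$, $h_n\in\R^+(\UU_{ad}^{biact}-\bar u)$, and, along a subsequence, $h_n\weakly h\in\TT_{\UU_{ad}^{biact}}(\bar u)$. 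Expanding $\hat J$ by the second-order Taylor formula with integral remainder, invoking the continuity of $j''$ and the compactness of $T\colon L^2(\Omega)\to H_0^1(\Omega)$ (a consequence of $\|Tu\|_{H^2}\le C\|u\|_{L^2}$ and Rellich's theorem, so that $Th_n\to Th$ in $H_0^1(\Omega)$), dividing by $t_n^2$, and letting $n\to\infty$ while using $\hat J'(\bar u)h_n\ge 0$, we first obtain $j''(\bar y)T(h)^2+\alpha\le 0$---hence $Th\ne 0$ and $h\ne 0$---and then $\hat J'(\bar u)h_n\to 0$, i.e.\ $\left\langle j'(\bar y), Th\right\rangle+\alpha(\bar u,h)_{L^2}=0$. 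Thus $h$ is a nonzero element of the critical cone, and since $\|h\|_{L^2}\le 1$ this is incompatible with \eqref{eq:anSSC}. Hence \eqref{eq:StateConstrainedSubharmonic} satisfies a local quadratic growth condition at $\bar u$, and the second assertion of \cref{th:reduxsubharmonic} transfers it to \eqref{eq:Q}, which is the claim. The only genuinely non-routine steps are the identification of $S'(\bar u;\cdot)$ with $T$ on $\TT_{\UU_{ad}^{biact}}(\bar u)$ and the observation that, under subharmonicity, $\UU_{ad}^{biact}$ is precisely the feasible set of \eqref{eq:StateConstrainedSubharmonic}; once these are in place, the Tikhonov term $\tfrac{\alpha}{2}\|h\|_{L^2}^2$ provides exactly the coercivity that makes the weak-limit argument work.
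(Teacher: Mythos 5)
Your proposal is correct and takes essentially the same route as the paper: Bouligand stationarity of \eqref{eq:Q} together with \cref{the:Qstructure} and the subharmonicity of $\psi$ yields stationarity of $\bar u$ for the reduced problem \eqref{eq:StateConstrainedSubharmonic}, and \cref{th:reduxsubharmonic} transfers the quadratic growth back to \eqref{eq:Q}. The only difference is that where the paper simply cites \cite{ChristofWachsmuth2018} for the implication that \eqref{eq:anSSC} gives local quadratic growth for the smooth, convexly constrained problem \eqref{eq:StateConstrainedSubharmonic}, you prove this step directly by the weak-convergence contradiction argument used for \cref{th:SSCmajor}, and you spell out the details the paper leaves implicit (that $\UU_{ad}^{biact}$ is exactly the feasible set of \eqref{eq:StateConstrainedSubharmonic} and that $S'(\bar u;\cdot)=T$ on the relevant cone), all of which is sound.
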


\begin{proof}
From \eqref{eq:BouligandstatcondQ}, the sign of $\Delta \psi$ and \cref{the:Qstructure}, 
we obtain that $-\Delta \bar y = \bar u$ holds and that $\bar u$ satisfies 
\begin{equation}
\label{eq:BouligandPoisson}
\left \langle j'(\bar y), T(h) \right \rangle + \alpha \left ( \bar u, h \right )_{L^2} \geq 0 \qquad 
\forall h \in 
\TT_{\UU_{ad}^{biact}}(\bar u).
\end{equation}
The above implies in particular that $\bar u$ is Bouligand stationary for \eqref{eq:StateConstrainedSubharmonic},
and that we may invoke \cite[Lemma 3.2ii), Theorem 4.4, Lemma 5.1]{ChristofWachsmuth2018}
to deduce that \eqref{eq:anSSC} is a sufficient condition for local quadratic 
growth in \eqref{eq:StateConstrainedSubharmonic}. 
(Note that the admissible set $U_{ad} \cap T^{-1}(K \cap Y_{ad})$ of \eqref{eq:StateConstrainedSubharmonic} is trivially convex.)
The claim is now a straightforward consequence of \cref{th:reduxsubharmonic}.
\end{proof}

If we additionally assume that $j$ is convex,
then we obtain:

\begin{corollary}[Unique Solvability for Problems with Subharmonic Obstacles]
\label{cor:subharmonicSSC2}
Suppose that $\psi$ satisfies $\Delta \psi \geq 0$ a.e.\ in $\Omega$, that $Y_{ad}$ is convex,
and that $j$ is convex. Then, \eqref{eq:Q} admits one and only one local/global 
solution $\bar u$, this solution is uniquely determined by \eqref{eq:BouligandstatcondQ},
and there exists a constant $c>0$ such that $\bar u$ satisfies a global quadratic growth condition
(i.e., a condition of the form \eqref{eq:quadgrowth1} with $r=\infty$).
\end{corollary}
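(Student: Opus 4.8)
The plan is to combine \cref{th:reduxsubharmonic} with a convexity argument. Since $\Delta\psi \geq 0$ a.e.\ in $\Omega$, \cref{th:reduxsubharmonic} tells us that \eqref{eq:Q} is fully equivalent — with respect to local/global optima and local/global quadratic growth — to the Poisson control problem \eqref{eq:StateConstrainedSubharmonic}, and in this subharmonic situation the reformulation acts as the identity on feasible controls because $\max(0,-\Delta\psi)=0$. The first step is therefore to record that \eqref{eq:StateConstrainedSubharmonic} is a strongly convex problem. Writing $T : H^{-1}(\Omega) \to H_0^1(\Omega)$ for the linear, bounded solution operator of the Poisson equation, the reduced objective $u \mapsto j(Tu) + \tfrac{\alpha}{2}\|u\|_{L^2}^2$ of \eqref{eq:StateConstrainedSubharmonic} is the sum of the convex function $u \mapsto j(Tu)$ (a convex function composed with a linear map) and the $\alpha$-strongly convex function $u \mapsto \tfrac{\alpha}{2}\|u\|_{L^2}^2$, hence is $\alpha$-strongly convex and continuous on $L^2(\Omega)$; moreover its feasible set $U_{ad} \cap T^{-1}(K \cap Y_{ad})$ is convex, being the intersection of the convex set $U_{ad}$ with the preimage under the linear map $T$ of the convex set $K \cap Y_{ad}$, and it is nonempty by \cref{assumption:standingQ}.

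Given this, the second step is routine convex analysis: an $\alpha$-strongly convex, coercive (since $j$ is bounded from below) and lower semicontinuous functional over a nonempty closed convex set attains its infimum at exactly one point $\tilde u$, every local minimizer over this set is automatically a global minimizer and hence equals $\tilde u$, and the first-order optimality of $\tilde u$ together with $\alpha$-strong convexity yields
\[
j(Tu) + \tfrac{\alpha}{2}\|u\|_{L^2}^2 \ \geq\ j(T\tilde u) + \tfrac{\alpha}{2}\|\tilde u\|_{L^2}^2 + \tfrac{\alpha}{2}\|u-\tilde u\|_{L^2}^2
\]
for all feasible $u$, because $\left\langle j'(T\tilde u),\, T(u-\tilde u)\right\rangle + \alpha(\tilde u, u-\tilde u)_{L^2} \geq 0$. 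Transferring these three statements back to \eqref{eq:Q} via \cref{th:reduxsubharmonic} then gives that \eqref{eq:Q} possesses exactly one local/global solution $\bar u$ and that $\bar u$ satisfies a global quadratic growth condition of the form \eqref{eq:quadgrowth1} with $r=\infty$ and some constant $c>0$.

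Finally, I would show that $\bar u$ is the unique point satisfying the Bouligand stationarity condition \eqref{eq:BouligandstatcondQ}. That $\bar u$ itself satisfies \eqref{eq:BouligandstatcondQ} follows from \cref{prop:Qwellposedness}. Conversely, let $\bar u' \in U_{ad}$ with $\bar y' := S(\bar u') \in Y_{ad}$ satisfy \eqref{eq:BouligandstatcondQ}. Arguing exactly as in the proof of \cref{cor:subharmonicSSC1}, the subharmonicity of $\psi$ together with \cref{the:Qstructure} (which here forces $u_{\bar y'} = -\Delta\bar y'$, hence $\bar u' = u_{\bar y'}$ and $-\Delta\bar y' = \bar u'$) turns \eqref{eq:BouligandstatcondQ} into the variational inequality \eqref{eq:BouligandPoisson}; since the objective and the feasible set of \eqref{eq:StateConstrainedSubharmonic} are convex, \eqref{eq:BouligandPoisson} is not only necessary but also sufficient for $\bar u'$ to be a global minimizer of \eqref{eq:StateConstrainedSubharmonic}, so $\bar u' = \tilde u$ and thus $\bar u' = \bar u$. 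The only genuinely delicate point in the whole argument is this last passage — from the weak-outer-tangent-cone condition \eqref{eq:BouligandstatcondQ} to the ordinary-tangent-cone variational inequality \eqref{eq:BouligandPoisson} together with the structural identification $\bar u' = u_{\bar y'}$ — but this is precisely what \cref{the:Qstructure} and the proof of \cref{cor:subharmonicSSC1} already deliver, so the corollary follows essentially by assembling results already established.
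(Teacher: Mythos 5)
Your proposal is correct and follows essentially the same route as the paper: reduce \eqref{eq:Q} to \eqref{eq:StateConstrainedSubharmonic} via \cref{th:reduxsubharmonic}, observe that the latter is a strongly convex problem over a convex feasible set (so it has a unique local/global solution, characterized by \eqref{eq:BouligandPoisson} and enjoying global quadratic growth), and transfer everything back, with the passage from \eqref{eq:BouligandstatcondQ} to \eqref{eq:BouligandPoisson} handled exactly as in the proof of \cref{cor:subharmonicSSC1}. The only difference is that you spell out the convex-analytic details (strong convexity of $u \mapsto j(Tu)+\tfrac{\alpha}{2}\|u\|_{L^2}^2$, convexity and closedness of $U_{ad}\cap T^{-1}(K\cap Y_{ad})$, the growth estimate from the first-order condition) that the paper leaves implicit.
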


\begin{proof}
From the convexity of $Y_{ad}$ and $j$, it follows that the objective function of \eqref{eq:StateConstrainedSubharmonic}
is strongly convex and that \eqref{eq:StateConstrainedSubharmonic} is a convex problem. 
This implies in particular that \eqref{eq:StateConstrainedSubharmonic} admits one and only one local/global solution 
which is uniquely determined by the Bouligand stationarity condition \eqref{eq:BouligandPoisson} of \eqref{eq:StateConstrainedSubharmonic}
and which satisfies a global quadratic growth condition.
The claim now follows immediately from \cref{th:reduxsubharmonic}, cf.\ also the proof of \cref{cor:subharmonicSSC1}.
\end{proof}

We would like to point out that, even for a subharmonic $\psi$ and convex  $j$ and $Y_{ad}$, 
it is typically completely unclear whether \eqref{eq:Q} is a convex minimization problem or not. 
To the authors' knowledge, 
the convexity of \eqref{eq:Q} could be established so far only for the quite pathological case of 
a classical tracking-type optimal control problem with a desired state $y_D$ 
satisfying $y_D \leq \psi$ a.e.\ in $\Omega$, see
\cite[Théorème~4.1]{Mignot1976}.
Our analysis shows, however, that all points that could possibly prevent \eqref{eq:Q} from being convex 
are suboptimal in the situation of \cref{cor:subharmonicSSC2}. 
Because of this effect, \eqref{eq:Q} effectively behaves like a convex problem and we are able to prove the 
uniqueness of its solution. 
For the sake of completeness, 
we also state the following corollary for the problem \eqref{eq:P}:

\begin{corollary}[SSC for Local Optimality in \eqref{eq:P} in the Presence of Subharmonicity]
\label{cor:subharmonicSSC3}
Suppose that $\psi$ satisfies $\Delta \psi \geq 0$ a.e.\ in $\Omega$, 
and that a control $\bar u \in U_{ad}$ is given which satisfies the strong stationarity system \eqref{eq:strongstationarity} of \eqref{eq:P}
with a triple $(\bar p, \bar \nu, \bar \eta) \in H_0^1(\Omega) \times L^2(\Omega) \times H^{-1}(\Omega)$,
state $\bar y := S(\bar u) \in H_0^1(\Omega)$ and multiplier $\bar \lambda := -\Delta \bar y - \bar u \in L^2(\Omega)$. 
Assume further that
\begin{equation*}
j''(\bar y)T(h)^2 + \alpha \|h\|_{L^2}^2 >  0\qquad \forall h \in 
\bar \nu^\perp \cap T^{-1}(\bar \eta^\perp) \cap \TT_{U_{ad}^{biact}}(\bar u) \setminus \{0\}
\end{equation*}
holds, where $T: H^{-1}(\Omega) \to H_0^1(\Omega)$
again denotes the solution map  of the Poisson problem, and where 
$U_{ad}^{biact} := U_{ad} \cap T^{-1}(K)$. Then,
$\bar u$ is locally optimal for \eqref{eq:P} and satisfies a local quadratic growth condition 
of the form \eqref{eq:quadgrowth1} with some constants $c,r > 0$. If, additionally, the function $j$ is convex,
then there exists at most one $\bar u$ which satisfies the strong stationarity condition \eqref{eq:strongstationarity},
and \eqref{eq:strongstationarity} is a sufficient condition for global optimality and 
global quadratic growth. 
\end{corollary}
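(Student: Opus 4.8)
The plan is to view \eqref{eq:P} as the special case $Y_{ad} = H^2(\Omega)$ of \eqref{eq:Q}---which is legitimate because $H^2(\Omega)$ is convex, weakly closed and feasible, and because $T^{-1}(K) = T^{-1}(K \cap H^2(\Omega))$ for $T$ restricted to $L^2(\Omega)$ by elliptic regularity, so that $U_{ad}^{biact}$ here coincides with the set $\UU_{ad}^{biact}$ of \cref{cor:subharmonicSSC1}---and to invoke \cref{th:reduxsubharmonic}, which under $\Delta\psi \ge 0$ turns \eqref{eq:P} into the control- and state-constrained Poisson problem \eqref{eq:StateConstrainedSubharmonic}, i.e.\ into
\[
\min_{u \in U_{ad},\ T(u) \in K}\ j(T(u)) + \frac{\alpha}{2}\,\|u\|_{L^2}^2 ,
\]
whose admissible control set is exactly $U_{ad}^{biact} = U_{ad} \cap T^{-1}(K)$. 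First I would note that, by the discussion following the definition of strong stationarity, $\bar u$ is Bouligand stationary for \eqref{eq:P}, hence---since $Y_{ad} = H^2(\Omega)$, by Mazur's lemma---satisfies \eqref{eq:BouligandstatcondQ}; together with $\Delta\psi \ge 0$ and \cref{the:Qstructure} this forces $\bar u = u_{\bar y} = -\Delta\bar y$, so that $\bar\lambda = 0$ and $\bar y = T(\bar u) \in K$, and $\bar u$ is admissible for \eqref{eq:StateConstrainedSubharmonic}.

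The heart of the argument is the identification of the critical cone. Using $\bar\lambda = 0$ (so $\bar\lambda^\perp = H_0^1(\Omega)$), the relation $-\Delta T(h) = h$, and \eqref{eq:strongstationarity_1}--\eqref{eq:strongstationarity_2}, I would compute---exactly as in \eqref{eqrandomeq82636}, but now with the variational-inequality contribution collapsed to the Poisson relation and $\langle -\Delta\bar p, T(h)\rangle = (h, \bar p)_{L^2}$---that
\[
\langle j'(\bar y), T(h)\rangle + \alpha\,(\bar u, h)_{L^2} = \langle \bar\eta, T(h)\rangle + (\bar\nu, h)_{L^2} \qquad \forall h \in \TT_{U_{ad}^{biact}}(\bar u).
\]
Since $T$ is linear and continuous from $L^2(\Omega)$ to $H_0^1(\Omega)$ and $T(U_{ad}^{biact}) \subseteq K$, one has $T\bigl(\TT_{U_{ad}^{biact}}(\bar u)\bigr) \subseteq \TT_K(\bar y)$, so \eqref{eq:strongstationarity_4} (with $\bar\lambda = 0$) and \eqref{eq:strongstationarity_5} show that both terms on the right are nonnegative; this re-derives the Bouligand stationarity \eqref{eq:BouligandPoisson} of $\bar u$ for \eqref{eq:StateConstrainedSubharmonic}, and shows that the sum vanishes if and only if $h \in T^{-1}(\bar\eta^\perp) \cap \bar\nu^\perp$. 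Hence the critical cone of $\bar u$ for \eqref{eq:StateConstrainedSubharmonic} is precisely $\bar\nu^\perp \cap T^{-1}(\bar\eta^\perp) \cap \TT_{U_{ad}^{biact}}(\bar u)$, so the hypothesis of the corollary is exactly condition \eqref{eq:anSSC} of \cref{cor:subharmonicSSC1} for this problem. An application of \cref{cor:subharmonicSSC1} (equivalently, of \cite[Lemma 3.2ii), Theorem 4.4, Lemma 5.1]{ChristofWachsmuth2018} combined with \cref{th:reduxsubharmonic}) then delivers the local optimality and the local quadratic growth asserted in the first part.

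For the convex case I would observe that $u \mapsto j(T(u)) + \frac{\alpha}{2}\|u\|_{L^2}^2$ is strongly convex and $U_{ad}^{biact} = U_{ad} \cap T^{-1}(K)$ is convex, so \eqref{eq:StateConstrainedSubharmonic} possesses a unique local/global minimizer, characterized by \eqref{eq:BouligandPoisson} and satisfying a global quadratic growth condition (this is \cref{cor:subharmonicSSC2} with $Y_{ad} = H^2(\Omega)$). Since, by the computation above, every strongly stationary point of \eqref{eq:P} satisfies \eqref{eq:BouligandPoisson}, it must coincide with this minimizer; thus there is at most one such point, and \cref{th:reduxsubharmonic} transfers its global optimality and its global quadratic growth (an inequality of the form \eqref{eq:quadgrowth1} with $r = \infty$) back to \eqref{eq:P}.

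The main obstacle is the bookkeeping in the second paragraph: justifying $\bar\lambda = 0$ from Bouligand stationarity plus subharmonicity via \cref{the:Qstructure}, verifying the collapsed analogue of \eqref{eqrandomeq82636} (in particular the identity $\langle -\Delta\bar p, T(h)\rangle = (h, \bar p)_{L^2}$), and checking the tangent-cone inclusion $T(\TT_{U_{ad}^{biact}}(\bar u)) \subseteq \TT_K(\bar y)$, so that the sign conditions \eqref{eq:strongstationarity_4}--\eqref{eq:strongstationarity_5} can be used to split the critical cone cleanly. Everything else is a routine application of \cref{the:Qstructure,th:reduxsubharmonic,cor:subharmonicSSC1,cor:subharmonicSSC2}.
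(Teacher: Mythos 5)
Your proposal is correct and follows essentially the same route as the paper: the paper's (one-line) proof likewise combines the identity \eqref{eqrandomeq82636} with \cref{cor:subharmonicSSC1,cor:subharmonicSSC2} for $Y_{ad} = H^2(\Omega)$, and your detailed steps (using \cref{the:Qstructure} to get $\bar\lambda = 0$ and $\bar y = T(\bar u)$, the identity $\langle j'(\bar y), T(h)\rangle + \alpha(\bar u,h)_{L^2} = \langle\bar\eta, T(h)\rangle + (\bar\nu,h)_{L^2}$ on $\TT_{U_{ad}^{biact}}(\bar u)$, and the inclusion $T(\TT_{U_{ad}^{biact}}(\bar u)) \subseteq \TT_K(\bar y)$) are exactly the bookkeeping the paper leaves implicit. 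Nothing is missing.
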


\begin{proof}
The claim follows immediately from \eqref{eqrandomeq82636} and \cref{cor:subharmonicSSC1,cor:subharmonicSSC2}.
\end{proof}

Note that we could also state necessary second-order optimality conditions for problems \eqref{eq:Q} 
with subharmonic obstacles at this point by
proceeding completely analogously to the proofs of \cref{cor:subharmonicSSC1,cor:subharmonicSSC2,cor:subharmonicSSC3} 
and by invoking corresponding results for (special instances of) \eqref{eq:StateConstrainedSubharmonic}
as found, e.g., in \cite{NhuSonYao2016} and \cite{BonnansNecSec2009}. For the sake of brevity, we do not go into the details here.

Before we turn our attention to problems with general obstacles, 
we would like to mention that the equivalence in \cref{th:reduxsubharmonic} is also interesting for the analysis 
of control- and state-constrained optimal control problems of the form \eqref{eq:StateConstrainedSubharmonic}. 
In combination with the results of \cref{sec:2}, for example, 
\cref{th:reduxsubharmonic} yields that the subharmonicity of the bound $\psi$ in \eqref{eq:StateConstrainedSubharmonic} 
can be used as a constraint qualification that ensures the existence of a multiplier system 
even in the absence of Slater points:

\begin{corollary}[Multipliers for State-Constrained Problems without Slater Points]
\label{cor:strongasKKT}
For every local solution $\bar u$
of an optimal control problem of the form \eqref{eq:StateConstrainedSubharmonic} 
that satisfies $Y_{ad} = H^2(\Omega)$, $\Delta \psi \geq 0$ a.e.\ in $\Omega$, and 
one of the conditions \ref{item:strong-i}
and \ref{item:strong-iii} in \cref{th:strongstationaritynecessary}, 
there exist an adjoint state $\bar p \in H_0^1(\Omega)$
and multipliers $\bar \nu \in L^2(\Omega)$, $\bar \eta \in  H^{-1}(\Omega)$
such that  $\bar u$, its state $\bar y$, $\bar p$, $\bar \nu$, and $\bar \eta$ satisfy the system
\eqref{eq:strongstationarity}
with $\bar\lambda = 0$.
\end{corollary}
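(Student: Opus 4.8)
The plan is to derive the assertion from the necessary optimality condition in \cref{th:strongstationaritynecessary} after re-interpreting the given minimizer of \eqref{eq:StateConstrainedSubharmonic} as a local solution of \eqref{eq:P}.

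First I would observe that, since $Y_{ad} = H^2(\Omega)$ and $S(u) \in H^2(\Omega)$ holds for every $u \in L^2(\Omega)$ by \cref{th:solutionmapproperties}, the state constraint $y \in Y_{ad}$ is vacuous; hence \eqref{eq:Q} coincides with \eqref{eq:P} in this situation, and \cref{assumption:standingQ} is trivially satisfied (take $u = 0 \in U_{ad}$, so that $S(0) \in Y_{ad}$). Because $\Delta\psi \geq 0$ a.e.\ in $\Omega$, \cref{th:reduxsubharmonic} applies, and its part~(i) (the ``and vice versa'' direction) shows that the local solution $\bar u$ of \eqref{eq:StateConstrainedSubharmonic} is a local solution of \eqref{eq:P}.

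Next I would pin down the multiplier. By the very definition of problem \eqref{eq:StateConstrainedSubharmonic}, the state belonging to $\bar u$ is the unique $\bar y \in H_0^1(\Omega) \cap H^2(\Omega)$ with $-\Delta \bar y = \bar u$ and $\bar y \in K$. For this $\bar y$ and any $v \in K$ we have $\langle -\Delta\bar y, v - \bar y\rangle = \langle \bar u, v - \bar y\rangle$, so $\bar y$ solves the variational inequality \eqref{eq:obstacleproblem}; by the uniqueness statement of \cref{th:solutionmapproperties} this forces $\bar y = S(\bar u)$ and therefore $\bar\lambda := -\Delta\bar y - \bar u = 0$ (in particular $\bar\lambda^\perp = H_0^1(\Omega)$, so that $\TT_K(\bar y) \cap \bar\lambda^\perp = \TT_K(\bar y)$). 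Since, moreover, one of the conditions \ref{item:strong-i}, \ref{item:strong-iii} is assumed and $\bar u$ is locally optimal for \eqref{eq:P}, \cref{th:strongstationaritynecessary} yields a triple $(\bar p,\bar\nu,\bar\eta) \in H_0^1(\Omega)\times L^2(\Omega)\times H^{-1}(\Omega)$ for which $\bar u$, $\bar y$, $\bar\lambda$, $\bar p$, $\bar\nu$, $\bar\eta$ satisfy the strong stationarity system \eqref{eq:strongstationarity}. As $\bar\lambda = 0$, this is precisely the system claimed in the corollary, and the proof is complete.

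The whole argument is essentially bookkeeping; the only step deserving care is the identification $\bar y = S(\bar u)$, $\bar\lambda = 0$, which is exactly the place where the subharmonicity of $\psi$ enters. It is the reason why the partially optimal control $u_{\bar y}$ from \cref{the:Qstructure} equals $-\Delta\bar y$ with vanishing associated multiplier (cf.\ \eqref{eq:uydef}, \eqref{eq:lambdaydef}), and hence why \eqref{eq:StateConstrainedSubharmonic} can be placed within the strong stationarity framework of \cref{sec:2}.
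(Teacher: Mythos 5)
Your argument is correct and is essentially the paper's own proof: the claim is obtained by combining the equivalence of \cref{th:reduxsubharmonic} (with $Y_{ad} = H^2(\Omega)$, so that \eqref{eq:Q} reduces to \eqref{eq:P}) with the necessity of strong stationarity from \cref{th:strongstationaritynecessary}, the identification $\bar\lambda = 0$ coming from the admissibility $-\Delta\bar y = \bar u$, $\bar y \in K$. Only your closing remark is slightly off: the subharmonicity of $\psi$ is not what forces $\bar\lambda = 0$ (that follows from admissibility alone); it is what makes \cref{th:reduxsubharmonic} applicable, i.e., what lets local optimality transfer from \eqref{eq:StateConstrainedSubharmonic} to \eqref{eq:P}.
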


\begin{proof}
The claim is a straightforward consequence of the equivalence in \cref{th:reduxsubharmonic} and
the necessity of the strong stationarity system in  \cref{th:strongstationaritynecessary}.
\end{proof}

Note that \eqref{eq:strongstationarity} implies in particular that the adjoint state $\bar p$ and the optimal control $\bar u$
enjoy $H_0^1(\Omega)$-regularity in the situation of \cref{cor:strongasKKT}. 
Normally, one would only obtain $\bar p, \bar u \in W_0^{1,s}(\Omega)$ here for all $1 \leq s < d/(d-1)$,
cf.\ \cite[Proposition 1]{KunischBergounioux2002} and \cite[Theorem 2.1]{CasasMateosVexler2014}.
We remark that this higher regularity of $\bar p$ and $\bar u$ for problems of the type \eqref{eq:StateConstrainedSubharmonic}
with $Y_{ad} = H^2(\Omega)$
has already been proved under different assumptions on $\psi$ and in the presence of a Slater point in
\cite[Theorem 3.1]{CasasMateosVexler2014}
by exploiting properties of the Green's function associated with the Poisson equation $-\Delta y = u$. 
We obtain the same result along completely different lines, namely, by including the 
state constraint into the solution operator and by utilizing the 
stability properties of the obstacle problem \eqref{eq:obstacleproblem}. 

\section{Enhanced Second-Order Conditions for General Obstacles}
\label{sec:6}

For non-subharmonic $\psi$, the additional terms in the objective function and the constraints of \eqref{eq:StateConstrained} 
cannot be neglected and the derivation of second-order optimality conditions naturally becomes more complicated.
In what follows, we will show that it is nevertheless possible to 
improve the results collected in \cref{th:SSCmajor}  for general obstacles $\psi$ 
by exploiting 
the observations made in \cref{sec:4}. 
To simplify the analysis, henceforth, we again consider the problem \eqref{eq:P}, i.e., 
we restrict our attention to the case $Y_{ad} = H^2(\Omega)$.
The main result of this section is the following: 

\begin{theorem}
\label{th:enhancedSSC}
Suppose that $\max(0, -\Delta \psi) \leq u_b$ holds a.e.\ in $\Omega$,
and that a control $\bar u \in U_{ad}$ is given which satisfies the strong stationarity system \eqref{eq:strongstationarity} of the problem 
\eqref{eq:P} with a triple $(\bar p, \bar \nu, \bar \eta) \in H_0^1(\Omega) \times L^2(\Omega) \times H^{-1}(\Omega)$,
state $\bar y := S(\bar u) \in H_0^1(\Omega)$ and multiplier $\bar \lambda \in L^2(\Omega)$.
Then, the following is true:
\begin{enumerate}
\item \label{item:enhancedSSC-i} 
If 
there exist constants $\beta \geq 0$ and $\gamma,\delta> 0$ with
\begin{gather}
	-\alpha \bar u + \beta (\bar y - \psi) \geq 0 \text{ a.e.\ in } \{0 < \bar y - \psi < \gamma, \,\Delta \psi < 0, \, 0 < \bar u < -(2+\delta)\Delta \psi\},
\label{eq:randomeq242315-1}
\\
\bar \eta + \beta \mathds{1}_{\{\bar y = \psi\}}\max(0, - \Delta \psi) \geq 0 \text{ in the sense of }H^{-1}(\Omega),
\label{eq:randomeq242315-2}
\end{gather}
and if 
\begin{equation}
\label{eq:randomeq2423152}
j''(\bar y)S'(\bar u; h)^2 + \alpha \|h\|_{L^2}^2 >  0
\end{equation}
holds for all $h \in \TT_{U_{ad}}(\bar u) \setminus \{0\}$ with $S'(\bar u; h) \in \bar \eta^\perp$,
then $\bar u$ is locally optimal for \eqref{eq:P} and there exist constants $c, \varepsilon > 0$ with
\begin{equation}
\label{eq:randomgrowth22}
J(S(u), u) \geq J(S(\bar u), \bar u) + \frac{c}{2} \|u - \bar u\|_{L^2}^2\qquad \forall u \in U_{ad} \cap B_\varepsilon^{L^2}(\bar u).
\end{equation}

\item \label{item:enhancedSSC-ii} 
If $\bar u$ satisfies 
\begin{equation}
\label{eq:globalcondition72623}
\bar u \not \in \big ( 0, -2\Delta \psi  \big ) \text{ a.e.\ in } \{\bar y > \psi, \Delta \psi < 0 \},
\end{equation}
and if there exist constants $\beta \geq 0$ and $\mu \geq 0$ such that
\begin{equation}
\label{eq:randomeq827373}
\begin{gathered}
\bar \eta + \beta \mathds{1}_{\{\bar y = \psi\}}\max(0, - \Delta \psi) \geq 0 \text{ in the sense of }H^{-1}(\Omega),
\\
j''(y)z^2 \geq \mu \|z\|_{L^2}^2 \quad \forall (y, z) \in K \times (K - \bar y),
\end{gathered}
\end{equation}
and
\begin{equation}
\label{eq:globalinequality22}
 \mu + 2 \beta \omega - \frac{\beta^2 }{ \alpha} \geq 0
\end{equation}
holds, where the Poincaré constant $\omega$ is again defined by \eqref{eq:PoincareDef}, then $\bar u$ is globally optimal for \eqref{eq:P}. If, further,
$\mu$ is positive and \eqref{eq:globalinequality22} is strict, 
then $\bar u$ is even the unique global optimum of the problem \eqref{eq:P}.
\end{enumerate}
\end{theorem}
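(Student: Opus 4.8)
The plan is to deduce both parts of \cref{th:enhancedSSC} from the reduction results of \cref{sec:4}. Since \eqref{eq:P} is \eqref{eq:Q} with $Y_{ad}=H^2(\Omega)$ and strong stationarity implies Bouligand stationarity by \eqref{eqrandomeq82636}, \cref{the:Qstructure} applies: in particular $\bar\lambda=\mathds{1}_{\{\bar y=\psi\}}\max(0,-\Delta\psi)$, so that $\bar\eta+\beta\bar\lambda$ in \eqref{eq:aux_in_acht_4} is exactly the quantity appearing in \eqref{eq:randomeq242315-2} and \eqref{eq:randomeq827373}. By \cref{cor:redux1} it suffices, for part~\ref{item:enhancedSSC-i}, to prove the local quadratic growth \eqref{eq:randomgrowth22} over the partially optimal controls $\{u_y: y\in S(U_{ad})\}$, and by the partial-optimality inequality \eqref{eq:randomeq287364782} it suffices, for part~\ref{item:enhancedSSC-ii}, to show $J(S(u),u_{S(u)})\ge J(\bar y,\bar u)$ for every $u\in U_{ad}$. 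The advantage of restricting to $u_y$ is that its multiplier is precisely $\lambda_y=\mathds{1}_{\{y=\psi\}}\max(0,-\Delta\psi)$ by \eqref{eq:lambdaydef}, so the dual terms in the Taylor expansion \eqref{eq:Taylorexpansion} can be localized to the set $\{y=\psi,\ \Delta\psi<0\}$.

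Ad \ref{item:enhancedSSC-i}: I would argue by contradiction exactly as in the proof of \cref{th:SSCmajor}\ref{item:MajorSSC-i}, obtaining controls $u_n=u_{y_n}\to\bar u$, $t_n:=\norm{u_n-\bar u}_{L^2}\searrow 0$, $h_n:=(u_n-\bar u)/t_n\weakly h\in\TT_{U_{ad}}(\bar u)$ violating \eqref{eq:randomgrowth22}, with $(y_n-\bar y)/t_n\to S'(\bar u;h)$ in $H_0^1(\Omega)$ and $\lambda_n:=-\Delta y_n-u_n=\mathds{1}_{\{y_n=\psi\}}\max(0,-\Delta\psi)$. Feeding \eqref{eq:Taylorexpansion}, \eqref{eq:aux_in_acht_4} (with the given $\beta$) and the convergence $\beta(\tfrac{y_n-\bar y}{t_n},\tfrac{\lambda_n-\bar\lambda}{t_n})_{L^2}\to\dual{-\Delta S'(\bar u;h)-h}{S'(\bar u;h)}=0$ into the quotient $t_n^{-2}(J(y_n,u_n)-J(\bar y,\bar u))$, the analysis reduces — modulo the nonnegative term $t_n^{-1}\dual{\bar\eta+\beta\bar\lambda}{\max(0,t_n^{-1}(y_n-\bar y))}$ — to showing $\liminf_n t_n^{-1}(\bar p+\beta(\bar y-\psi),t_n^{-1}\lambda_n)_{L^2}+t_n^{-2}(\bar\nu,u_n-\bar u)_{L^2}\ge 0$ and that this liminf is in fact $0$. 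Here $\lambda_n$ is supported in $\{y_n=\psi,\Delta\psi<0\}\subseteq\{0\le\bar y-\psi\le Ct_n,\Delta\psi<0\}$; on the part where $\bar y=\psi$ one uses $\bar p\ge 0$ q.e.\ (since $\bar p\in\TT_K(\bar y)$), and on the part $\{0<\bar y-\psi\le Ct_n\}\subseteq\{0<\bar y-\psi<\gamma\}$ one splits according to whether $0<\bar u<-(2+\delta)\Delta\psi$ (handled by \eqref{eq:randomeq242315-1} together with $\bar p=\bar\nu-\alpha\bar u$ and the sign of $\bar\nu$ from \eqref{eq:strongstationarity_5}) or not (in which case the strict inequality $\bar u\ge-(2+\delta)\Delta\psi$, the bound $\max(0,-\Delta\psi)\le u_b$, and the structure of $\bar u$ on $\{\bar y>\psi\}$ make $t_n^{-1}[(\bar p+\beta(\bar y-\psi))\lambda_n+\bar\nu(u_n-\bar u)+\tfrac\alpha2(u_n-\bar u)^2]$ nonnegative, with $\delta$ supplying the slack needed to beat the $O(t_n^{-1})$ prefactor). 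As in \cref{th:SSCmajor} one then extracts $S'(\bar u;h)\in\bar\eta^\perp$ and arrives at $0\ge\tfrac12 j''(\bar y)S'(\bar u;h)^2+\tfrac\alpha2\norm{h}_{L^2}^2$, contradicting \eqref{eq:randomeq2423152} (for $h=0$ the term $\tfrac\alpha2$ coming from $\norm{h_n}_{L^2}=1$ already gives the contradiction).

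Ad \ref{item:enhancedSSC-ii}: For $u\in U_{ad}$, $y:=S(u)$, $u_y$ as in \eqref{eq:uydef}, insert $u_y$ into \eqref{eq:Taylorexpansion}, rewrite $\dual{\bar p}{\lambda_y-\bar\lambda}+\dual{\bar\eta}{y-\bar y}$ via \eqref{eq:aux_in_acht_4} with the given $\beta$, use $\dual{\bar\eta+\beta\bar\lambda}{\max(0,y-\bar y)}\ge 0$ (from \eqref{eq:randomeq827373}), $\int_0^1(1-s)j''(\cdot)(y-\bar y)^2\,\d s\ge\tfrac\mu2\norm{y-\bar y}_{L^2}^2$, and the identity $\lambda_y-\bar\lambda=-\Delta(y-\bar y)-(u_y-\bar u)$ to recover a term $\beta\norm{\nabla(y-\bar y)}_{L^2}^2$. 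A Poincaré plus Young estimate in the spirit of \eqref{eq:globaloptimalityestimate} — which is exactly where \eqref{eq:globalinequality22} enters — then yields, modulo a reserved fraction of $\tfrac\alpha2\norm{u_y-\bar u}_{L^2}^2$, the bound $J(y,u_y)-J(\bar y,\bar u)\ge(\bar p+\beta(\bar y-\psi),\lambda_y)_{L^2}+(\bar\nu,u_y-\bar u)_{L^2}$. Since $(\bar\nu,u_y-\bar u)_{L^2}\ge 0$ pointwise a.e.\ (as $u_y\in U_{ad}$ and by \eqref{eq:strongstationarity_5}) and $\lambda_y$ lives on $\{y=\psi,\Delta\psi<0\}$, one splits this set into $\{\bar y=\psi\}$ (where $\bar p\ge 0$ q.e.) and $B:=\{y=\psi,\bar y>\psi,\Delta\psi<0\}$; on $B$, \eqref{eq:globalcondition72623} forces $\bar u\le 0$ — whence $\bar p=\bar\nu-\alpha\bar u\ge 0$ using the sign of $\bar\nu$ and $\max(0,-\Delta\psi)\le u_b$ — or $\bar u\ge-2\Delta\psi$, in which case the combination of $(\bar p+\beta(\bar y-\psi))\lambda_y$, $\bar\nu(u_y-\bar u)$, the reserved part of $\tfrac\alpha2(u_y-\bar u)^2$ and the $\beta$-cross terms is made nonnegative by exploiting $\bar u\ge-2\Delta\psi$ (so that $\tfrac\alpha2\bar u(\bar u+2\Delta\psi)\ge 0$ and $\bar\nu(-\Delta\psi-\bar u)\ge 0$). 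For the uniqueness and global quadratic growth under strict \eqref{eq:globalinequality22} and $\mu>0$, one additionally keeps the gap $\tfrac\alpha2\norm{u-u_y}_{L^2}^2$ from \eqref{eq:randomeq287364782} and the strictly positive quadratic remainder, arguing as in \cref{th:SSCmajor}\ref{item:MajorSSC-ii} and \cref{cor:redux1}\ref{cor:redux1:ii}.

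The step I expect to be the main obstacle is, in both parts, the last one: controlling the sign of the residual dual term — the integral of $\bar p+\beta(\bar y-\psi)$ against the competitor's multiplier — on the portion of the competitor's contact set lying over $\{\bar y>\psi,\Delta\psi<0\}$, where $\bar p$ may be very negative. This forces a careful distribution of the control-cost term $\tfrac\alpha2\norm{u_y-\bar u}_{L^2}^2$ between this absorption and the Poincaré–Young estimate producing \eqref{eq:globalinequality22}/\eqref{eq:randomeq2423152}, and the constants $2$ in \eqref{eq:globalcondition72623} and $2+\delta$ in \eqref{eq:randomeq242315-1} are precisely calibrated so that $\bar u\ge-2\Delta\psi$ (respectively $\bar u\ge-(2+\delta)\Delta\psi$) renders $\tfrac\alpha2\bar u(\bar u+2\Delta\psi)$ (respectively the same expression with a strict surplus) nonnegative and sufficient to dominate the negative contribution of $\bar p$.
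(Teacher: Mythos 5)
Your skeleton is the paper's: reduction to the partially optimal controls $u_y$ via \cref{the:Qstructure} and \cref{cor:redux1}, the expansion \eqref{eq:Taylorexpansion} combined with \cref{lem:aux_in_acht}, a pointwise treatment of the competitor's multiplier on $A_y:=\{y=\psi,\ \bar y>\psi,\ \Delta\psi<0\}$ using $\bar p=\bar\nu-\alpha\bar u$, the admissibility of $-\Delta\psi$ (this is where $\max(0,-\Delta\psi)\le u_b$ enters), and the identity $\tfrac\alpha2\bar u(\bar u+2\Delta\psi)\ge 0$ off the bad set. However, two of your calibration steps do not close as written. In part~\ref{item:enhancedSSC-i}, after spending the fraction $\tfrac{\alpha}{2+\delta}\bar u^2$ of the Tikhonov term on $A_{y_n}$ to dominate $\alpha\bar u\,\Delta\psi$, only the surplus $\tfrac{\delta\alpha}{2(2+\delta)}\|h_n\|_{L^2(A_{y_n})}^2$ survives there; hence for $h=0$ the contradiction must come from this surplus (together with $h_n\mathds{1}_{\Omega\setminus A_{y_n}}\weakly h$ and $\|h_n\|_{L^2}=1$), not from ``the term $\alpha/2$ coming from $\|h_n\|_{L^2}=1$'', which is no longer available. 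This retained surplus --- and not ``beating the $O(t_n^{-1})$ prefactor'' --- is the reason $\delta>0$ is assumed and why the threshold is $-(2+\delta)\Delta\psi$ rather than $-2\Delta\psi$.

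In part~\ref{item:enhancedSSC-ii}, your route pairs $\lambda_y-\bar\lambda$ with $y-\bar y$ (to recover $\beta\|\nabla(y-\bar y)\|_{L^2}^2$) and reserves ``a fraction'' of $\tfrac\alpha2\|u_y-\bar u\|_{L^2}^2$. Pairing with $y-\bar y$ produces on $A_y$ the cross term $-\beta\bar u(\bar y-\psi)$; together with the helpful term $\beta(\bar y-\psi)\lambda_y$ this leaves $\beta(\bar y-\psi)(-\Delta\psi-\bar u)$, which is negative wherever $\bar u>-\Delta\psi$ (allowed under \eqref{eq:globalcondition72623}) and cannot be absorbed by $\bar u\ge -2\Delta\psi$, by the $\mu$-term (which may vanish), or by the Tikhonov term on $A_y$ --- the latter is already fully consumed by $\tfrac\alpha2\bar u(\bar u+2\Delta\psi)\ge 0$, which has zero slack at $\bar u=-2\Delta\psi$. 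Likewise, a fraction-based reservation degrades either the Young constant (so \eqref{eq:globalinequality22} no longer suffices) or the absorption constant (so the constant $2$ in \eqref{eq:globalcondition72623} no longer suffices). The paper's device is to test with $\max(0,y-\bar y)$ instead: since this function vanishes on $\{y=\psi\}$, all $\beta$-terms live off the competitor's contact set, and the Tikhonov term is split by region --- full weight on $A_y$ for the pointwise absorption $\bar\nu(-\Delta\psi-\bar u)+\tfrac\alpha2\bar u(\bar u+2\Delta\psi)\ge 0$, full weight on $\Omega\setminus\{y=\psi\}$ for the Poincar\'e--Young step producing \eqref{eq:globalinequality22} --- so both constants stay sharp. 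With these two corrections your argument coincides with the paper's proof.
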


\begin{proof}
The proof of  \cref{th:enhancedSSC} is along the lines of that of \cref{th:SSCmajor} and again based 
on contradiction arguments and the  expansion \eqref{eq:Taylorexpansion}. 

To prepare some of the subsequent steps,
consider an arbitrary state $y \in H_0^1(\Omega) \cap H^2(\Omega)$
that is attainable in \eqref{eq:P}, and denote with $u_y \in U_{ad}$ and $\lambda_y$ the 
partially optimal control of $y$ and the associated multiplier as in \eqref{eq:lambdaydef}, respectively. 
Further,
we define the sets
\begin{align*}
	A_y &:= \{ \bar y > \psi, \; y = \psi, \; \Delta \psi < 0 \}
	,
	&
	B_y &:= A_y \cap \{ 0 < \bar u < -(2+\delta) \, \Delta \psi \}
	.
\end{align*}
Here, we use $\delta= 0$ in case \ref{item:enhancedSSC-ii}.

Note that $\bar \lambda = -\Delta\psi$ a.e.\ on $\{\bar y = \psi, \; \Delta\psi < 0\}$.
Thus, $\bar p = 0$ a.e.\ on this set.
Now,
using \eqref{eq:strongstationarity_2}, \eqref{eq:strongstationarity_3} and 
\eqref{eq:lambdaydef},
we have
\begin{equation}
\label{eq:randomeq286328}
	\dual{\bar p}{\lambda_y - \bar \lambda}
	= \dual{\bar p}{\lambda_y}
	=
	- \int_{\{y = \psi, \; \Delta\psi < 0\}} \bar p \, \Delta\psi \,\dx
	= \int_{A_y} (\alpha \bar u - \bar \nu) \Delta\psi \,\dx
	.
\end{equation}
Next, we check that
$u_y - \mathds{1}_{A_y} \Delta\psi \in U_{ad}$.
Indeed, from \eqref{eq:uydef}, we obtain
\begin{equation*}
	(u_y - \mathds{1}_{A_y} \Delta\psi)(x)
	=
	0 - \Delta\psi(x) \in [0,u_b(x)]
\end{equation*}
for a.a.\ $x \in A_y$.
This admissibility implies $(\bar\nu, u_y - \mathds{1}_{A_y}\Delta\psi - \bar u)_{L^2} \ge 0$.
Combining the last inequality with \eqref{eq:randomeq286328}
yields
\begin{align*}
	&\dual{\bar p}{\lambda_y - \bar\lambda}
	+
	(\bar\nu, u_y - \bar u)_{L^2}
	+
	\frac\alpha2 \norm{u_y - \bar u}_{L^2}^2
	\\&\qquad=
	\alpha \int_{A_y} \bar u \Delta\psi \,\dx
	+
	(\bar\nu, u_y - \mathds{1}_{A_y} \Delta\psi - \bar u)_{L^2}
	+
	\frac\alpha2 \norm{u_y - \bar u}_{L^2}^2
	\\
	&\qquad\ge
	\alpha \int_{A_y} \bar u \Delta\psi + \frac1{2+\delta} \bar u^2 \,\dx
	+
	\frac\alpha2 \norm{u_y - \bar u}_{L^2(\Omega\setminus A_y)}^2
	+
	\frac{\delta\,\alpha}{2\,(2+\delta)} \norm{u_y - \bar u}_{L^2(A_y)}^2
	.
\end{align*}
Finally,
we use that $\bar u (\Delta \psi + \frac1{2+\delta} \bar u) \ge 0$
on $\{\bar u \not\in (0,-(2+\delta)\Delta\psi), \; \Delta\psi < 0\}$.
Hence,
\begin{equation}
	\label{eq:sadroeiiaoedr}
	\begin{aligned}
		&
		\dual{\bar p}{\lambda_y - \bar\lambda}
		+
		(\bar\nu, u_y - \bar u)_{L^2}
		+
		\frac\alpha2 \norm{u_y - \bar u}_{L^2}^2
		\\&\qquad
		\ge
		\alpha \int_{B_y} \bar u \Delta\psi \,\dx
		+
		\frac\alpha2 \norm{u_y - \bar u}_{L^2(\Omega\setminus A_y)}^2
		+
		\frac{\delta\,\alpha}{2\,(2+\delta)} \norm{u_y - \bar u}_{L^2(A_y)}^2
		.
	\end{aligned}
\end{equation}

We are now in the position to verify \ref{item:enhancedSSC-i}: 
Suppose that we are given a strongly stationary point $\bar u \in U_{ad}$ with associated $\bar y$, $\bar \lambda$, $\bar p$,
$\bar \nu$ and $\bar \eta$ such that
the conditions in \eqref{eq:randomeq242315-1}, \eqref{eq:randomeq242315-2}  and \eqref{eq:randomeq2423152} are satisfied 
and such that \eqref{eq:randomgrowth22} is violated. Then, 
it follows from \cref{the:Qstructure} and the fact that strong stationarity implies 
Bouligand stationarity that $\bar u = u_{\bar y}$ holds, where $ u_{\bar y}$ is again defined by \eqref{eq:uydef},
and we may invoke \cref{cor:redux1} to deduce that there exist sequences 
$\{y_n\} \subset S(U_{ad})$ and  $\{c_n\} \subset \mathbb{R}^+$
such that $\{y_n\}$, $\{c_n\}$ and the controls $\{u_{y_n}\} \subset U_{ad}$ defined in \eqref{eq:uydef}
satisfy  
	\begin{equation*}
		u_{y_n} \in U_{ad},\quad
		c_n \searrow 0
		,\quad
		\norm{u_{y_n} - \bar u }_{L^2} \to 0
		\quad\text{and}\quad
		J(y_n, u_{y_n}) - J(\bar y, \bar u) < \frac{c_n}{2} \|u_{y_n} - \bar u\|_{L^2}^2.
	\end{equation*} 
Define $t_n := \norm{u_{y_n} - \bar u}_{L^2}$, $h_n := (u_{y_n} - \bar u) / t_n$, and denote
the multipliers associated with $y_n$ and $u_{y_n}$  in \eqref{eq:lambdaydef} with
$\lambda_{y_n}$.
Then, it holds $t_n \searrow 0$, $\|h_n\|_{L^2}=1$, and we may again assume w.l.o.g.\ that the sequence $h_n$ converges weakly in $L^2(\Omega)$ to some 
$h \in \TT_{U_{ad}}(\bar u)$ for $n \to \infty$. Using \eqref{eq:Taylorexpansion}, the continuity of $j''$, 
and 
the fact that $h_n \weakly h$ in $L^2(\Omega)$ implies $(y_n - \bar y)/t_n \to S'(\bar u; h)$ in $H_0^1(\Omega)$,
we may now deduce that 
\begin{equation}
\label{eq:randomeq2816363}
\begin{aligned}
		0
		&\geq
		\frac{J( y_n, u_{y_n}) - J(\bar y, \bar u) - \frac{c_n}{2} \|t_n h_n \|_{L^2}^2}{t_n^2}
		\\
		& = \frac{1}{t_n^2} \big (
	\left \langle  \bar p  , \lambda_{y_n} - \bar \lambda \right \rangle +
	\left \langle  \bar \eta, y_n  - \bar y \right \rangle + ( \bar \nu,  u_{y_n} - \bar u )_{L^2} \big )
	+ \frac{1}{2} j''(\bar y) S'(\bar u; h)^2  + \frac{\alpha}{2} \| h_n\|_{L^2}^2 + \oo(1),
\end{aligned}
\end{equation}
where the Landau symbol refers to the limit $n \to \infty$. 
Using additionally \eqref{eq:sadroeiiaoedr} with $y = y_n$,
we find
\begin{equation*}
	0
	\ge
	\frac1{t_n^2}
	\dual{\bar\eta}{y_n - \bar y}
	+
	\frac\alpha{t_n^2} \int_{B_{y_n}} \bar u \Delta\psi \,\dx
	+
	\zeta_n + \oo(1)
	,
\end{equation*}
where we used the abbreviation
\begin{equation*}
	\zeta_n
	:=
	\frac{1}{2} j''(\bar y) S'(\bar u; h)^2
	+
	\frac\alpha2 \norm{h_n}_{L^2(\Omega\setminus A_{y_n})}^2
	+
	\frac{\delta\,\alpha}{2\,(2+\delta)} \norm{h_n}_{L^2(A_{y_n})}^2
	.
\end{equation*}
By exactly the same arguments as in the proof of part \ref{item:MajorSSC-i} of  \cref{th:SSCmajor}, 
we obtain that $\{y_n = \psi, \bar y > \psi\} \subset \{0 < \bar y - \psi \leq C t_n\}$ holds with an absolute constant $C>0$.
Thus,
\eqref{eq:randomeq242315-1} implies
that
$-\alpha\bar u + \beta\,(\bar y-\psi) \ge 0$
holds a.e.\ on $B_{y_n}$ for $n$ large enough, and 
we arrive at
\begin{align*}
	0
	&\ge
	\frac1{t_n^2} \dual{\bar\eta}{y_n - \bar y}
	+                
	\frac\beta{t_n^2} \int_{B_{y_n}} (\bar y - \psi) \, \Delta \psi \,\dx
	+
	\zeta_n + \oo(1)
	\\&\ge
	\frac1{t_n^2} \dual{\bar\eta}{y_n - \bar y}
	+                
	\frac\beta{t_n^2} \int_{A_{y_n}} (\bar y - \psi) \, \Delta \psi \,\dx
	+
	\zeta_n + \oo(1)
	.
\end{align*}
Now, we can use that $\psi = y_n$
and $\lambda_{y_n} = -\Delta\psi$ a.e.\ on $A_{y_n}$
as well as
$(\bar y - y_n)\lambda_{y_n} \ge 0$
a.e.\ on $\Omega$
to obtain
\begin{equation*}
	0 \ge
	\frac1{t_n^2} \dual{\bar\eta}{ y_n - \bar y }
	-
	\frac\beta{t_n^2} \int_{\Omega} (\bar y - y_n) \, \lambda_{y_n} \,\dx
	+
	\zeta_n + \oo(1)
	.
\end{equation*}
Recall that, due to
the convergence $(y_n - \bar y)/t_n \to S'(\bar u; h)$ in $H_0^1(\Omega)$ and the variational inequality \eqref{eq:VIdirdiff}, 
we have
\begin{equation*}
\left \langle \frac{\lambda_{y_n} - \bar \lambda}{t_n}, \frac{y_n - \bar y}{t_n}  \right \rangle 
\to
 \left \langle -\Delta S'(\bar u; h) - h,  S'(\bar u; h) \right \rangle
= 0. 
\end{equation*}
Thus,
\begin{equation}
\label{eq:draenoidreaoi}
	0 \ge
	\frac1{t_n^2} \dual{\bar\eta + \beta\bar\lambda}{y_n - \bar y}
	+
	\zeta_n + \oo(1)
	.
\end{equation}
Using \eqref{eq:aux_in_acht_1}, \eqref{eq:aux_in_acht_3}, \eqref{eq:randomeq242315-2}
and $\bar\lambda = \mathds{1}_{\{\bar y = \psi\}}\max(0, - \Delta \psi)$,
see \cref{the:Qstructure},
we have
\begin{equation*}
	\dual{\bar\eta + \beta \bar \lambda}{y_n - \bar y}
	=
	\dual{\bar\eta + \beta \bar \lambda}{\max(0,y_n - \bar y)}
	\ge
	0
	.
\end{equation*}
Thus, \eqref{eq:draenoidreaoi} implies $\dual{\bar\eta}{S'(\bar u; h)} = 0$.
From the convergence $y_n \to \bar y$ in $L^\infty(\Omega)$, we obtain further that
%, at least after the transition to a subsequence,
it holds $\mathds{1}_{A_{y_n}} \to 0$ pointwise a.e.\ in $\Omega$.
In combination with the boundedness in $L^\infty(\Omega)$ of $\{\mathds{1}_{A_{y_n}}\}$, 
this
implies that
$h_n \mathds{1}_{\Omega \setminus A_{y_n}} \weakly h$
in $L^2(\Omega)$.
Consequently,
\begin{align*}
	0
	&\ge
	\zeta_n + \oo(1)
	\\
	&=
	\frac{1}{2} j''(\bar y) S'(\bar u; h)^2
	+
	\frac\alpha{2+\delta}\,\norm{h_n}_{L^2(\Omega\setminus A_{y_n})}^2
	+
	\frac{\delta\,\alpha}{2\,(2+\delta)} \norm{h_n}_{L^2(\Omega)}^2
	+\oo(1)
	.
\end{align*}
The weak convergence
$h_n \mathds{1}_{\Omega \setminus A_{y_n}} \weakly h$
and $\norm{h_n}_{L^2} = 1$ now
imply
\begin{equation*}
	0
	% \ge
	% \frac{1}{2} j''(\bar y) S'(\bar u; h)^2
	% +
	% \liminf_{n \to \infty}
	% \frac\alpha2\,\norm{h_n}_{L^2(\Omega\setminus A_{y_n})}^2
	% +
	% \frac{\delta\,\alpha}{2\,(2+\delta)} \norm{h_n}_{L^2(A_{y_n})}^2
	\ge
	\frac{1}{2} j''(\bar y) S'(\bar u; h)^2
	+
	\frac\alpha{2+\delta}\,\norm{h}_{L^2(\Omega)}^2
	+
	\frac{\delta\,\alpha}{2\,(2+\delta)}
	.
\end{equation*}
This contradicts \eqref{eq:randomeq2423152} and $\norm{h}_{L^2} \le 1$
and completes the proof of \ref{item:enhancedSSC-i}.

It remains to prove \ref{item:enhancedSSC-ii}. To this end, let us
suppose that $\bar u$ is strongly stationary and satisfies \eqref{eq:globalcondition72623}, \eqref{eq:randomeq827373} and \eqref{eq:globalinequality22}
with some $\bar y$, $\bar \lambda$, $\bar \eta$, $\bar p$, $\bar \nu$, $\mu$ and $\beta$.
Then, \cref{the:Qstructure} again implies that $\bar u = u_{\bar y}$ has to hold
with $u_{\bar y}$ as in \eqref{eq:uydef}. 
Consider now an arbitrary but fixed state $y \in H_0^1(\Omega) \cap H^2(\Omega)$
that is attainable in \eqref{eq:P}, and denote with $u_y \in U_{ad}$ and $\lambda_y$ the 
partially optimal control of $y$ and the associated multiplier in \eqref{eq:lambdaydef}, respectively. 
From \eqref{eq:globalcondition72623}, we have $B_y = \emptyset$.
Using \eqref{eq:aux_in_acht_1}, \eqref{eq:aux_in_acht_3}, \eqref{eq:randomeq827373}
and $\bar\lambda = \mathds{1}_{\{\bar y = \psi\}}\max(0, - \Delta \psi)$,
see \cref{the:Qstructure},
we have
\begin{equation*}
	\dual{\bar\eta}{y - \bar y}
	=
	\dual{\bar\eta}{\max(0,y - \bar y)}
	\ge
	-\beta \, \dual{\bar\lambda}{\max(0,y - \bar y)}
	=
	-\beta \, \dual{\bar\lambda-\lambda_y}{\max(0,y - \bar y)}
	.
\end{equation*}
Now, it follows from \eqref{eq:sadroeiiaoedr} and similarly to the derivation of \cref{th:SSCmajor}\ref{item:MajorSSC-ii}  that 
\begin{equation*}
\begin{aligned}
&J(y, u_y) - J(\bar y, \bar u)
\\
&\quad \geq 
\left \langle  \bar p  , \lambda_y - \bar \lambda \right \rangle +
\left \langle  \bar \eta, y - \bar y \right \rangle + ( \bar \nu, u_y - \bar u)_{L^2} + \frac{\mu }{2}\|y - \bar y\|_{L^2}^2 
+ \frac{\alpha}{2} \|u_y - \bar u\|_{L^2}^2
\\
&\quad\geq
- \beta \left \langle  \bar \lambda - \lambda_y, \max(0, y - \bar y) \right \rangle
+ \frac{\mu }{2}\|y - \bar y\|_{L^2}^2 
+ \frac{\alpha}{2} \norm{u_y - \bar u}_{L^2(\Omega\setminus A_y)}^2
\\
&\quad\geq  \beta \int_{\Omega} |\nabla \max(0, y - \bar y) |^2 \mathrm{d}x
+ \beta \int_{\Omega \setminus \{y = \psi\}} (\bar u - u_y)  \max(0, y - \bar y)  \mathrm{d}x
\\
&\qquad\qquad\qquad\qquad + \frac{\mu }{2}\|y - \bar y\|_{L^2}^2 
+ \frac{\alpha}{2} \norm{u_y - \bar u}_{L^2(\Omega\setminus \{y=\psi\})}^2
\\
&\quad\geq 
\frac{1}{2}\left ( \mu + 2 \beta \omega- \frac{\beta^2 }{ \alpha} \right )  \| \max(0, y - \bar y)\|_{L^2}^2
+
\frac{\mu}{2} \| \min(0, y - \bar y)\|_{L^2}^2.
\end{aligned}
\end{equation*}
The claim now follows immediately from \cref{the:Qstructure}
and the one-to-one correspondence between the states $y$ and the partially optimal controls $u_y$. 
\end{proof}

Note that the assumptions \eqref{eq:randomeq242315-2}, \eqref{eq:randomeq827373} and \eqref{eq:globalinequality22}
in \cref{th:enhancedSSC}
are exactly the same as in \cref{th:SSCmajor}. 
For the conditions \eqref{eq:randomeq242315-1} and \eqref{eq:globalcondition72623}, this is different. 
Consider, for example, the special case $u_a = -\infty$ and $u_b = \infty$.
In this situation, \eqref{eq:strongstationarity_2} implies $\bar p = - \alpha \bar u$
and we may recast  \eqref{eq:globalcondition72623} as
\begin{equation*}
\bar p \not \in \big ( 2 \alpha \Delta \psi , 0 \big ) \text{ a.e.\ in } \{\bar y > \psi,\, \Delta \psi < 0 \}.
\end{equation*}
What is remarkable about the above condition is that, in contrast to \eqref{eq:multiplierassumptions-i} 
and the results in \cite{KunischWachsmuth2012,AliDeckelnickHinze2018}, it states that 
optimality can not only be guaranteed when the absolute value of the negative part $\min(0, \bar p)$ 
of the adjoint state $\bar p$
is sufficiently small in the inactive set $\{\bar y > \psi\}$, but also when $\abs{\min(0, \bar p)}$ is sufficiently large in those parts 
of the domain $\Omega$, where $\psi$ and $\bar y$ satisfy $\bar y > \psi$ and $\Delta \psi < 0$.  
Moreover, the behavior of $\bar p$ in the set $\{\bar y > \psi, \, \Delta \psi \geq 0\}$
is completely irrelevant for the second-order conditions in \cref{th:enhancedSSC}. 
At least to the authors' best knowledge, similar effects have not been documented so far in the literature. 

\section{Counterexamples: Strong Stationarity without Optimality}
\label{sec:7}

We conclude this paper with three counterexamples that put
the results of \cref{sec:3,sec:4,sec:5,sec:6} into perspective and demonstrate which effects can prevent 
a strongly stationary point from being locally optimal.

\subsection{Strict Activity and the Influence of the Multiplier $\boldsymbol{\bar \eta}$}
\label{subsec:counter_strictly_active_1d}
First, we construct a strongly stationary point $\bar u$ with state $\bar y$, multiplier $\bar \lambda$
and a triple $(\bar p, \bar \nu, \bar \eta)$ as in \eqref{eq:strongstationarity}
such that the whole domain $\Omega$ is strictly active and such that $\bar u$
is not a local minimum of \eqref{eq:P}: 
Consider the interval $\Omega := (0,1) \subset \R$ and 
fix a number $r > -1/2$.
Thus, the definition
$\bar\lambda(x) := x^r$
yields $\bar\lambda \in L^2(\Omega)$.
From
\eqref{eq:doublecomplementarity}, we know that,
for strong stationarity to hold with an a.e.-positive $\bar \lambda$,
the control $\bar u$ has to vanish a.e.\ in $\Omega$. Therefore, we define $\bar u := 0$. 
Solving the Poisson problem on $(0,1)$ with Dirichlet boundary conditions
and the right-hand side $\bar u + \bar \lambda  = \bar \lambda$ now yields
\begin{equation*}
	\bar y(x)
	=
	\frac{x - x^{r+2}}{(r+2) \, (r+1)}
	=:
	\psi(x)
	.
\end{equation*}
In order to comply with \eqref{eq:strongstationarity},
we further choose
\begin{equation*}
	u_a := - \infty
	,\quad 
	u_b := \infty 
	,\quad
	\bar p := \bar \nu := 0
	,\quad
	\bar\eta := j'(\bar y)
	,\quad
	\alpha > 0.
\end{equation*}
The objective function $j$ will be specified below. 
Due to $\TT_K(\bar y) \cap \bar\lambda\anni = \{0\}$,
it is easy to see that \eqref{eq:strongstationarity}
is satisfied for the above $\bar u$, $\bar y$, $\bar \lambda$, $\bar p$, $\bar \nu$, $\bar \eta$ and $\psi$,
i.e.,
$\bar u$ is strongly stationary.

To show that $\bar u$ is not necessarily a local minimum of \eqref{eq:P}, 
we consider the perturbed controls
\begin{equation*}
	u_t(x)
	:=
	\begin{cases}
		t^r + x^r & \text{if } x \in (0,t) \\
		0 & \text{else}
	\end{cases},\qquad t > 0. 
\end{equation*}
We first give a lower bound for the associated states
$y_t := S(u_t)$.
To this end, we define
\begin{equation*}
	\hat y_t(x) := \psi(x) + \frac{t^r}{2}\,(t \, x - x^2)
	\ge
	\psi(x)
	=
	\bar y(x)
\end{equation*}
for $x \in [0,t]$.
We are going to show that
$y_t \ge \hat y_t$ on $[0,t]$.
First, we check that $y_t > \psi$ a.e.\ on $(0,t)$.
Indeed, if the measure of the set $A_t := \{y_t = \psi\} \cap (0,t)$
was positive,
then
Stampacchia's lemma together with $y_t, \psi \in H^2(0,1)$
would imply
\begin{equation*}
	t^r + x^r
	=
	u_t(x)
	\le
	u_t(x) + \lambda_t(x)
	=
	-\Delta y_t(x)
	=
	-\Delta \psi(x)
	=
	\bar\lambda(x)
	=
	x^r
\end{equation*}
f.a.a.\ $x \in A_t$, where $\lambda_t := -\Delta y_t - u_t$ denotes the multiplier associated with $u_t$, 
and this would be a contradiction.
Hence,
$\lambda_t = 0$ a.e.\ on $(0,t)$
and this, in turn, gives
$-\Delta y_t = u_t = -\Delta \hat y_t$
on $(0,t)$.
Together with
$y_t(0) = 0 = \hat y_t(0)$
and
$y_t(t) \ge \psi(t) = \hat y_t(t)$,
the comparison principle now yields
the desired inequality 
$y_t \ge \hat y_t$ on $(0,t)$.

Next, we compute
\begin{equation*}
	\norm{u_t}_{L^2}^2
	=
	\int_0^t (t^r + x^r)^2 \, \dx
%	=
%	\int_0^t t^{2\,r} + 2 \, t^r \, x^r + x^{2\,r} \, \dx
	=
	\paren[\Big]{1 + \frac{2}{r+1} + \frac1{2\,r + 1} } \, t^{2\,r + 1}
\end{equation*}
and
\begin{align*}
	\norm{y_t - \bar y}_{L^2}^2
	&\ge
	\int_0^t (\hat y_t - \psi)^2 \dx
	=
	\frac{t^{2\,r}}{4}\,\int_0^t (t \, x - x^2)^2 \, \dx
	\\
	&=
	\frac{t^{2\,r}}{4}\,\int_0^t t^2 \, x^2 - 2 \, t \, x^3 + x^4 \, \dx
	=
	\frac{t^{2\,r + 5}}{4}\,\paren[\Big]{\frac13 - \frac12 + \frac 15}
	=
	\frac{t^{2 \, r + 5}}{120}.
\end{align*}
From now on,
we additionally assume that $r > 3/2$. For this choice of $r$, 
the last two estimates show that, for some constant $c > 0$, 
we have
\begin{equation*}
	\frac{\norm{y_t-\bar y}_{L^2}}{\norm{u_t}_{L^2}^2}
	\ge
	c \, t^{r + 5/2 - 2 \,r - 1}
	=
	c \, t^{3/2 - r}
	\to
	\infty
\end{equation*}
as $t \searrow 0$.
Hence,
the Banach-Steinhaus theorem implies the existence of
$g \in L^2(\Omega)$ with
\begin{equation*}
	\frac{\abs{\scalarprod{y_t-\bar y}{g}_{L^2}}}{\norm{u_t}_{L^2}^2}
	\to
	\infty
	.
\end{equation*}
In fact, due to $r > 3/2$, we can choose $\gamma \in (2-r, 1/2)$
and $g(x) := x^{-\gamma}$.
Indeed, due to the inequality $-\gamma> -1/2$ and the properties of $y_t$ and $\bar y$, we have $g \in L^2(0,1)$
and 
\begin{align*}
	\scalarprod{y_t-\bar y}{g}_{L^2}
	&\ge
	\int_0^t (\hat y_t - \psi) \, g \, \dx
	=
	\frac{t^r}{2} \, \int_0^t (x \, t - x^2) \, x^{-\gamma} \, \dx
	\\
	&=
	\frac{t^r}{2} \, \paren[\Big]{\frac1{2-\gamma} - \frac1{3-\gamma}} \, t^{3 - \gamma}
	=
	\frac{t^{r + 3 - \gamma}}{2 \, (2-\gamma) \, (3-\gamma)}
	.
\end{align*}
There thus exists a constant $c > 0$ such that 
\begin{equation*}
	\frac{\scalarprod{y_t-\bar y}{g}_{L^2}}{\norm{u_t}_{L^2}^2}
	\ge
	c \, t^{r + 3 - \gamma - 2 \, r - 1}
	=
	c \, t^{2 - \gamma - r}
	\to
	\infty.
\end{equation*}
If we now define 
\begin{equation}
\label{eq:randomobjective725362}
j : H_0^1(\Omega) \to \R,\qquad j(y) := -\scalarprod{y}{g}_{L^2},
\end{equation}
then it holds 
\begin{equation*}
	j(y_t) + \frac\alpha2 \, \norm{u_t}_{L^2}^2
	<
	j(\bar y)
	=
	j(\bar y) + \frac\alpha2 \, \norm{\bar u}_{L^2}^2
\end{equation*}
for all small enough $t > 0$. 
Together with $u_t \to 0 = \bar u$ in $L^2(\Omega)$,
this shows that $\bar u$ cannot be a local minimizer of the problem \eqref{eq:P} with $j$ chosen as in 
\eqref{eq:randomobjective725362}. (Note that the function $j$ in \eqref{eq:randomobjective725362} 
technically does not satisfy the conditions in \cref{assumption:standing} since it is not bounded from below. 
This can easily be corrected by redefining $j$ away from $\bar y$. 
We omit this modification here and in the next two subsections for the sake of simplicity.)

The reason for the non-optimality of $\bar u$ in the above example 
is precisely the $\bar \eta$-term in the expansion \eqref{eq:Taylorexpansion}. 
In particular, for $\bar \eta (x) = - g(x) = -x^{-\gamma}$ and $\bar \lambda(x) = x^{r}$ 
with exponents $r > 3/2$ and $\gamma \in (2-r, 1/2)$, we trivially have 
\begin{equation*}
 \bar \eta(x) + \beta \bar \lambda(x) = -x^{-\gamma} + \beta x^r \not \geq 0
\end{equation*}
for every choice of the parameter $\beta \geq 0$
so that, e.g., the condition \eqref{eq:randomeq242315-2} in \cref{th:enhancedSSC} is always violated.
Since the assumptions \eqref{eq:randomeq242315-1} and \eqref{eq:randomeq2423152} 
are obviously satisfied for the control $\bar u = 0$ and the objective \eqref{eq:randomobjective725362},
this demonstrates that the majorizability condition on the multiplier $\bar \eta$ 
in our second-order sufficient optimality conditions is necessary and cannot be dropped.

\subsection{Inactivity and the Influence of the Adjoint State $\boldsymbol{\bar p}$}
\label{subsec:counter_inactive_1d}
Next, we construct a strongly stationary point such that the whole domain $\Omega$
is inactive and such that the $\bar p$-term in \eqref{eq:Taylorexpansion} prevents $\bar u$
from being a local minimum. 
As before, we consider the interval $\Omega := (0,1)$
and the bounds $u_a = -\infty$, $u_b = \infty$.
In order to satisfy the system of strong stationarity \eqref{eq:strongstationarity},
we define
\begin{equation}
\label{eq:ex2p}
	\bar p(x) := - x \, (1-x).
\end{equation}
If we set $\alpha := 1$, then the above choice leads to
\begin{equation}
\label{eq:ex2u}
	\bar u(x) = -\bar p (x) = x \, (1-x)
\end{equation}
and we may solve the Poisson problem $-\Delta \bar y = \bar u$ with homogeneous Dirichlet boundary conditions
to obtain 
\begin{equation}
\label{eq:ex2y}
	\bar y(x) = \frac{x^4}{12} - \frac{x^3}{6} + \frac{x}{12}.
\end{equation}
To achieve inactivity (almost) everywhere in $\Omega$, we further set
\begin{equation}
\label{eq:ex2psi}
\psi(x) := \bar y(x) - c \, x^2 
\end{equation}
for some arbitrary but fixed $c \in (0,1/8)$, 
and to comply with \eqref{eq:strongstationarity_1}, we define the
state-dependent part of the objective function via
\begin{equation*}
	j(y) := \left \langle -\Delta \bar p, y \right \rangle = - 2 \int_\Omega y\, \mathrm{d}x. 
\end{equation*}
Now, it is straightforward to check that
\eqref{eq:strongstationarity} is satisfied
with
$\bar \nu = \bar\lambda = \bar\eta = 0$.

It remains to check that $(\bar y, \bar u)$
is not a local solution of \eqref{eq:P}.
To this end, we define the modified controls
\begin{equation*}
	u_t(x)
	:=
	\begin{cases}
		0 & \text{for } x \in (0,t), \\
		\bar u(x) +  2 \, c \, \frac{t^2 - 2\,t}{(1-t)^2} & \text{for } x \in (t,1),
	\end{cases} \qquad t \in (0, 1).
\end{equation*}
We claim that the states $y_t := S(u_t)$
associated with the above $u_t$, $t \in (0, 1)$, are precisely the functions
\begin{equation*}
	y_t(x)
	=
	\begin{cases}
		\psi(x) & \text{for } x \in (0,t) \\
		\bar y(x) + c (1-x) \, \left ( \frac{t^2 - 2\,t}{(1-t)^2} \, x +  \frac{t^2}{(1-t)^2}  \right ) & \text{for } x \in (t,1)
	\end{cases}.
\end{equation*}
Indeed, a direct calculation shows 
\begin{equation*}
y_t(t) = \bar y(t) - c  t^2 = \psi(t),
\qquad y_t'(t) = \bar y'(t) - 2 ct = \psi'(t),
\qquad y_t(0) = y_t(1) = 0,
\end{equation*}
so that $y_t$ is an element of $H_0^1(\Omega) \cap H^2(\Omega )$, 
and by exploiting this $H^2$-regularity, it is easy to check that 
\begin{equation*}
y_t(x) - \psi(x) 
\geq 
\begin{cases}
0 & \text{ for all  } x \in (0, t)
\\
 c x^2 + c (1-x) \, \left ( \frac{t^2 - 2\,t}{(1-t)^2} \, x +  \frac{t^2}{(1-t)^2}  \right ) \geq 0 & \text{ for all  } x \in (t, 1)
\end{cases}
\end{equation*}
and
\begin{equation*}
\lambda_t := - \Delta y_t - u_t =
\begin{cases}
 - \Delta \psi \geq 0 & \text{a.e.\ in } (0, t)
\\
0 & \text{a.e.\ in } (t, 1)
\end{cases}.
\end{equation*}
Thus, $y_t = S(u_t)$ as desired. Using \cref{lemma:Taylorlikeexpansion}, we may now compute that 
\begin{equation}
\label{eq:randomeq1635}
\begin{aligned}
J(y_t, u_t) - J(\bar y, \bar u)
&= 
\left \langle  \bar p  , \lambda_t \right \rangle + \frac{1}{2} \|u_t - \bar u\|_{L^2}^2
\\
&= \int_0^t \bar u \, \Delta \psi \, \dx + \frac12 \, \left ( \int_0^t \bar u^2 \dx 
+ \int_t^1 \left ( 2 \, c \, \frac{t^2 - 2\,t}{(1-t)^2} \right )^2
 \, \dx \right )
\\
&= \int_0^t - \frac12 \bar u^2 - 2c \bar u\dx + 2 \, c^2 \, \frac{(t^2 - 2\,t)^2}{(1-t)^3} 
\\
&= \int_0^t - \frac12 (x^2 - x)^2 - 2c (x - x^2)\dx + 2 \, c^2 \, \frac{(t^2 - 2\,t)^2}{(1-t)^3} 
\\
&= (- c + 8 c^2) t^2 + \oo(t^2),
\end{aligned}
\end{equation}
where the Landau symbol refers to the limit $t \searrow 0$. Since $c$ was chosen to be an element of the interval $(0, 1/8)$,
\eqref{eq:randomeq1635} implies that $\bar u$ is indeed not a local minimizer. 

Note that, for the above $\bar p$, $\bar \eta$, $\bar \nu $, $\alpha$ and  $j$, the expansion \eqref{eq:Taylorexpansion} yields
\begin{equation*}
\begin{aligned}
 J(y, u) - J(\bar y, \bar u)
&  =
\left \langle  \bar p  , \lambda - \bar \lambda \right \rangle +  \frac{1}{2} \|u - \bar u\|_{L^2}^2.
\end{aligned}
\end{equation*} 
The term in \eqref{eq:Taylorexpansion} that is responsible for the behavior in \eqref{eq:randomeq1635} is thus precisely the one
which involves the adjoint state $\bar p$. It is further easy to check that 
the functions $\bar p$, $\bar u$, $\bar y$, and $\psi$ in \eqref{eq:ex2p}, \eqref{eq:ex2u}, \eqref{eq:ex2y}
and \eqref{eq:ex2psi} satisfy 
\begin{equation*}
0 \geq \frac{ -\alpha \bar u}{ \bar y - \psi} = \frac{ \bar p}{ \bar y - \psi} = \frac{ x - 1}{ c x } \to -\infty \text{ for } x \to 0,
\end{equation*}
and 
\begin{equation*}
-2 \Delta \psi =  2 \left ( -\Delta \bar y + 2c  \right )
=   2\left (  \bar u  + 2c \right )\geq 2 \bar u \geq \bar u \geq 0 \quad \text{ a.e.\ in }\Omega.
\end{equation*}
This shows that the conditions 
\begin{equation*}
\bar p + \beta (\bar y - \psi)  \geq 0  \text{ a.e.\ in } \{0 < \bar y - \psi < \gamma\}
\end{equation*}
and 
\begin{equation*}
	-\alpha \bar u + \beta (\bar y - \psi) \geq 0 \text{ a.e.\ in } \{0 < \bar y - \psi < \gamma, \,\Delta \psi < 0, \, 0 < \bar u < -(2+\delta)\Delta \psi\},
\end{equation*}
in \cref{th:SSCmajor,th:enhancedSSC} are violated 
for every choice of the parameters $\beta \geq 0$, $\gamma > 0$ and $\delta > 0$, and, since \eqref{eq:randomeq242315-2} and 
\eqref{eq:randomeq2423152} trivially hold for $j'' = 0$ and $\bar \eta = 0$, 
that additional assumptions on $\bar u$ and $\bar p$ 
(or additional curvature terms in \eqref{eq:randomeq2423152} involving these quantities, respectively) are necessary for 
a second-order condition to hold in the above situation. 

\subsection{Non-Negligibility of Sets with Zero Capacity}
\label{subsec:counter_inactive_2d}

In what follows, we demonstrate by means of a final example that the contact set $\{\bar y= \psi \}$
is relevant for the derivation of second-order optimality conditions for problems of the type \eqref{eq:P}
even if it has $H^1$-capacity zero and is thus negligible in the first-order conditions
\eqref{eq:Bouligandstatcond} and \eqref{eq:strongstationarity}. 
Let us denote with $U_t(0)$, $t>0$, the open ball of radius $t$ around the origin
in the two-dimensional Euclidean space and define $\Omega := U_1(0) \subset \R^2$.
Since all functions in the following counterexample will be rotationally symmetric,
it is convenient to work with the Laplacian in polar coordinates, i.e.,
\begin{equation*}
	\Delta f = \frac1r \, \frac{\partial}{\partial r} \, \paren[\Big]{r \, \frac{\partial}{\partial r} f}
\end{equation*}
for rotationally symmetric $f$.
As before, 
we choose $\alpha := 1$, $u_a := - \infty$ and $u_b := \infty$. 

To construct a point which satisfies the strong stationarity system \eqref{eq:strongstationarity},
we define the adjoint via
$\bar p(r) := r^2 - 1$.
This leads to
\begin{equation*}
	\bar u(r) = -\bar p(r) = 1 - r^2
	\quad\text{and}\quad \bar \nu = 0.
\end{equation*}
Next, we solve the Poisson equation $-\Delta \bar y = \bar u$
to obtain
\begin{equation}
\label{eq:randomsolution263535}
	\bar y(r)
	=
	\frac1{16}\,r^4 - \frac14 \, r^2 + \frac3{16}
\end{equation}
and define
\begin{equation}
\label{eq:randomeq7253632}
	\psi(r) := \bar y(r) - c \, r^2
\end{equation}
with a constant $c > 0$ (to be fixed below). Due to the identity $-\Delta \bar y = \bar u$ and \eqref{eq:randomeq7253632}, 
it is obvious that $\bar y$ is precisely the solution of the obstacle problem on $\Omega$ with right-hand side $\bar u$ and obstacle $\psi$.
From the properties of $\bar y$ and $\psi$, it follows further that the constraint 
$y \geq \psi$ is only active in the origin in the above situation, 
i.e., the set $\{\bar y = \psi\}$ has $H^1$-capacity zero. Note that 
this implies in particular that $\bar \lambda = 0$ holds and 
that the control-to-state map $S : L^2(\Omega) \to H_0^1(\Omega)$, $u \mapsto y$, associated with \eqref{eq:obstacleproblem}
 is Gâteaux differentiable in $\bar u$, cf.\  \cref{th:solutionmapproperties}.
By choosing
\begin{equation}
\label{eq:randomobjective2635}
	j(y)
	=
	-4 \, \int_\Omega y \, \dx,
\end{equation}
we now obtain that the system \eqref{eq:strongstationarity} is satisfied with $\bar \eta = 0$.
Hence,
the point $(\bar y, \bar u)$ 
is strongly stationary for the problem \eqref{eq:P}.

To prove that $\bar u$ is nonetheless not  a local solution of \eqref{eq:P}, we use an argumentation 
that is similar to that in \cref{subsec:counter_inactive_1d}.
Define
\begin{equation*}
	u_t(r)
	:=
	\begin{cases}
		0 & \text{if } r \in (0,t), \\
		1-r^2 & \text{if } r \in [t,1),
	\end{cases}\qquad t > 0. 
\end{equation*}
Then, $u_t$ trivially satisfies $u_t \to u$ in $L^2(\Omega)$ for $t \searrow 0$,
the states $y_t := S(u_t)$ are clearly rotationally symmetric, and we may use the
comparison principle in \cref{lem:comparison} to deduce that $y_t \le \bar y$ holds a.e.\ in $\Omega$.
To obtain a reverse estimate, 
we consider the value $y_t(t)$, i.e., the value of $y_t$ at the radius $r = t$.
From $\psi \le y_t \le \bar y$,
it follows that $\abs{y_t(t) - \bar y(t)} \le c \, t^2$.
We claim that we even have $\norm{ y_t - \bar y}_{L^\infty} \le c \, t^2$.
On the inner ball $U_t(0)$, this inequality is obvious since
$0 \leq \bar y - \psi \le c \, t^2$ holds a.e.\ in $U_t(0)$.
Further, on the annulus $U_1(0) \setminus U_t(0)$,
the function $y_t - \bar y \le 0$ is superharmonic.
Thus, it attains its minimum on the boundary, and the desired estimate follows immediately.  

It remains to compare the values of the objective function in \eqref{eq:randomobjective2635}.
For the states, we have
\begin{equation*}
	\abs{ j(y_t) - j(\bar y) }
	\le
	4 \, \int_{\Omega} \abs{y_t - \bar y} \, \dx
	\le
	4 \, \pi \, c \, t^2
	.
\end{equation*}
Further, for the controls, we get
\begin{equation*}
	\frac12 \, \norm{u_t}_{L^2}^2
	-
	\frac12 \, \norm{\bar u}_{L^2}^2
	=
	-
	\pi
	\,
	\int_0^t (1 - r^2)^2 \, r \, \d r
	=
	\left (-\frac{t^2}{2} + \frac{t^4}2 - \frac{t^6}{6} \right )\pi
	.
\end{equation*}
Hence,
\begin{equation*}
	J(y_t, u_t) - J(\bar y, \bar u)
	\le
	 \left (4 c t^2 -\frac{t^2}{2} + \frac{t^4}2 - \frac{t^6}{6} \right )\pi
	.
\end{equation*}
The right-hand side of this inequality is negative for
$c < 1/8$
and $t > 0$ small enough.
This shows that the strongly stationary point $(\bar y, \bar u)$
cannot be a local minimizer for \eqref{eq:P}
although the objective function  is linear in $y$ and strongly convex in $u$.

The reason for the non-optimality of the tuple $(\bar y, \bar u)$ in the above example 
is essentially the same as in \cref{subsec:counter_inactive_1d}. Due to the properties 
of the adjoint state $\bar p$, the state $\bar y$ and the obstacle $\psi$, the $\bar p$-term in 
\eqref{eq:Taylorexpansion} becomes negative and goes to zero too slowly in the limit $u \to \bar u$
to be compensated by the quadratic expression $\frac{\alpha}{2} \|u - \bar u\|_{L^2}^2$.
What is remarkable in the situation of \eqref{eq:randomsolution263535} is that 
this effect is present although the contact set $\{\bar y = \psi \}$ has $H^1$-capacity zero 
and is thus completely irrelevant in the first-order optimality conditions \eqref{eq:Bouligandstatcond} and \eqref{eq:strongstationarity}.
To be more precise, we can  observe here that the sequence
\begin{equation*}
 \frac{1}{\|u - \bar u\|_{L^2}^2} (\lambda - \bar \lambda) \mathds{1}_{\{\bar y > \psi\}} \in L^2(\Omega)
\end{equation*}
appearing, e.g., in the proofs of \cref{th:SSCmajor,th:enhancedSSC}
exhibits a singular limiting behavior for $u \to \bar u$ and that the expression 
$
\|u - \bar u\|_{L^2}^{-2} \left \langle  \bar p  , \lambda  - \bar \lambda \right \rangle$
in the expansions \eqref{eq:randomestimate434} and \eqref{eq:randomeq2816363} tends to a singular term 
that depends on the function value of the negative part of the adjoint state $\bar p$ at the origin. 
Note that a similar behavior cannot occur in the one-dimensional setting where \eqref{eq:strongstationarity_3}
necessarily implies $\min(0, \bar p) = 0$ everywhere on $\{\bar y = \psi\}$.
The above considerations indicate that the constraint $S(u) = y$ in the 
optimal control problem \eqref{eq:P} induces additional curvature effects that 
depend on the fine properties of the adjoint state $\bar p$, the state $\bar y$
and the obstacle $\psi$. 
Note that similar observations have also been made in 
the context of bang-bang optimal control problems in 
\cite{ChristofWachsmuth2018}, 
the sensitivity analysis of elliptic variational inequalities of the second kind in \cite{ChristofH012018}, and 
necessary optimality conditions for state-constrained problems in \cite{NhuSonYao2016}. 
We leave a detailed analysis of the emerging distributional curvature terms for future work.

%%fakesection: Bibliography
\ifbiber
	% We are cheating a little bit ;)
	\renewcommand*{\bibfont}{\small}
	\printbibliography
\else
	\bibliographystyle{plainnat}
	\bibliography{references}
\fi
\end{document}